\documentclass[11pt]{article}

\usepackage{fullpage}
\usepackage{amsmath}
\usepackage{amsthm}
\usepackage{dsfont}
\usepackage{amsfonts}
\usepackage{hyperref}
\usepackage{graphicx}
\usepackage{epsfig}
\usepackage[authoryear,round]{natbib}
\usepackage{array}
\usepackage{bbm}
\usepackage{algorithm}
\usepackage{algorithmic}
\usepackage{url}
\usepackage{caption}
\usepackage{subcaption}

\newcommand{\PP}{\mathbb{P}} 
\newcommand{\R}{\mathbb{R}} 
\newcommand{\EE}{\mathbb{E}}      
\newcommand{\var}{\mathrm{Var}} 
\newcommand{\bX}{\mathbf{X}}
\newcommand{\bU}{\mathbf{U}}
\newcommand{\bV}{\mathbf{V}}
\newcommand{\bu}{\mathbf{u}}
\newcommand{\bv}{\mathbf{v}}
\newcommand{\bx}{\mathbf{x}}
\newcommand{\Pd}{\mathcal{P}_d} 
\newcommand{\HH}{\mathcal{H}}
\newcommand{\Sr}{\mathcal{S}}
\newcommand{\F}{\mathcal{F}}
\newcommand{\G}{\mathcal{G}}
\newcommand{\M}{\mathrm{MMD}} 
\newcommand{\HS}{\mathrm{HSIC}} 
\newcommand{\niton}{\not\owns}
\newcommand{\1}{\mathds{1}} 

\newtheorem{definition}{Definition}
\newtheorem{theorem}{Theorem}
\newtheorem{proposition}{Proposition}
\newtheorem{corollary}{Corollary}
\newtheorem{lemma}{Lemma}
\newtheorem{remark}{Remark} 
\newtheorem{assumption}{Assumption}

\date{}
\begin{document}

\title{Kernel-based ANOVA decomposition and Shapley effects -- Application to global sensitivity analysis}

\author{S\'{e}bastien Da Veiga\\
\\
Safran Tech, Modeling \& Simulation\\
Rue des Jeunes Bois, Ch\^{a}teaufort, 78114 Magny-Les-Hameaux, France}

\maketitle

\begin{abstract}
Global sensitivity analysis is the main quantitative technique for identifying the most influential input variables in a numerical simulation model. In particular when the inputs are independent, Sobol' sensitivity indices attribute a portion of the output of interest variance to each input and all possible interactions in the model, thanks to a functional ANOVA decomposition. On the other hand, moment-independent sensitivity indices focus on the impact of input variables on the whole output distribution instead of the variance only, thus providing complementary insight on the inputs / output relationship. Unfortunately they do not enjoy the nice decomposition property of Sobol' indices and are consequently harder to analyze. In this paper, we introduce two moment-independent indices based on kernel-embeddings of probability distributions and show that the RKHS framework used for their definition makes it possible to exhibit a kernel-based ANOVA decomposition. This is the first time such a desirable property is proved for sensitivity indices apart from Sobol' ones. When the inputs are dependent, we also use these new sensitivity indices as building blocks to design kernel-embedding Shapley effects which generalize the traditional variance-based ones used in sensitivity analysis. Several estimation procedures are discussed and illustrated on test cases with various output types such as categorical variables and probability distributions. All these examples show their potential for enhancing traditional sensitivity analysis with a kernel point of view.
\end{abstract}

\section{Introduction}

In the computer experiments community, global sensitivity analysis (GSA) has now emerged as a central tool for exploring the inputs/outputs relationship of a numerical simulation model. Starting from the pioneering work of Sobol \citep{sob93} and  Saltelli \citep{sal99} on the interpretation and estimation of Sobol' indices, the last two decades have been a fertile ground for the development of advanced statistical methodologies and extensions of original Sobol' indices: new estimation procedures (\cite{sdv09}, \cite{sdv13}, \cite{solis19}, \cite{gamboa20}), multivariate outputs with aggregation \citep{gamboa13} and dimensionality reduction \citep{lam10}, goal-oriented sensitivity analysis \citep{fort16} or moment-independent sensitivity measures (\cite{bor07}, \cite{sdv15}), among others. At the heart of the popularity of Sobol' indices is the fundamental functional analysis of variance (ANOVA) decomposition, which opens the path for their interpretation as parts of the output variance and makes it possible to pull apart the input main effects and all their potential interactions, up to their whole influence measured by total Sobol' indices. This decomposition however has two drawbacks. First, it is only valid when the inputs are independent, although some generalizations were investigated \citep{chas12}. Secondly, it only concerns the original Sobol' indices, meaning that it is not possible to split the input effects with goal-oriented or moment-independent sensitivity analysis in general.

When the inputs are dependent, total Sobol' indices can still be used to discriminate them when the objective is to build a surrogate model of the system, and other Sobol'-related indices have also been proposed for interpretability \citep{mara15}. But the major breakthrough happened when Shapley effects have been defined for GSA by Owen \citep{owe14}. Indeed due to their mathematical foundations from game theory, Shapley effects do not require the independence assumption to enjoy nice properties: each input is assigned a Shapley effect lying between $0$ and $1$, while the sum of all effects is equal to $1$. For a given input all interactions and correlations with other ones are mixed up, but the interpretation as parts of the output variance is kept and input rankings are still sensible. For these reasons Shapley effects are now commonly thought as central importance measures in GSA for dealing with dependence, and their estimation has been thoroughly investigated recently \citep{song16,ioopri19,bro20,pli20}.

From an interpretability perspective, other importance measures introduced in the context of goal-oriented and moment-independent sensitivity analysis have proven useful to gain additional insights on a given model. For example quantile-oriented \citep{fort16,maume18} or reliability-based measures \citep{dit96} can help understand which inputs lead to the failure of the system, while optimization-related indices enable dimension reduction for optimization problems \citep{spa19}. On the other hand, moment-independent sensitivity indices, which quantify the input impact on the whole output distribution instead of the variance only, are powerful complementary tools to grasp further types of input influence. Among them are the f-divergence indices (\cite{sdv15}, \cite{rah16}) with particular cases corresponding to the sensitivity index introduced by Borgonovo \citep{bor07} and the class of kernel-based sensitivity indices, which rely on the embedding of probability distributions in reproducing kernel Hilbert spaces (RKHS) \citep{sdv15,sdv16}. Unfortunately an ANOVA-like decomposition is not available yet for any of these indices even in the independent setting: as a consequence this limits the interpretation of their formulation for interactions since without ANOVA it is not possible to remove the main effects, and at the same time the natural normalization constant (equivalent to the total output variance for Sobol' indices) is not known.\\

In this paper we focus on a general RKHS framework for GSA and prove that an ANOVA decomposition actually exists for two previously introduced kernel-based sensitivity indices in the independent setting. To the best of our knowledge this is the first time such a decomposition is available for other sensitivity indices other than the original Sobol' ones. Not only this makes it possible to properly define higher-order indices, but this further gives access to their natural normalization constant. We also demonstrate that these measures are generalizations of Sobol' indices, in the sense that they are recovered with specific kernels. But the RKHS point of view additionally comes with a large body of work on several kernels specifically designed for particular target applications, such as multivariate, functional, categorical or time-series case studies, thus defining a unified framework for many real GSA test cases. When inputs are not independent, we finally introduce a kernel-based version of Shapley effects similar to the ones proposed by Owen.

The paper is organized as follows. Section \ref{sec:gsa} first briefly introduces the traditional functional ANOVA decomposition with Sobol' indices and moment-independent indices. In Section \ref{sec:kernel} we then discuss the elementary tools from RKHS theory needed to build kernel-based sensitivity indices which are at the core of this work. We further investigate these indices and prove they also arise from an ANOVA decomposition. In addition we define Shapley effects with kernels and the benefits of the RKHS framework for GSA are studied through several examples.
Several estimation procedures are then discussed in Section \ref{sec:estim}, where we generalize some of the recent estimators for Sobol' indices. Finally, Section \ref{sec:exp} illustrates the potential of these sensitivity indices with various numerical experiments corresponding to typical GSA applications.

\section{Global sensitivity analysis} \label{sec:gsa}

Let $\eta:\ \mathcal{X}_1\times\ldots\times\mathcal{X}_d \rightarrow \mathcal{Y}$ denote the numerical simulation model, which is a function of $d$ input variables $X_l\in\mathcal{X}_l$, $l=1,\ldots,d$, and $Y\in\mathcal{Y}$ the model output given by $Y=\eta(X_1,\ldots,X_d)$. In standard GSA the inputs $X_l$ are further assumed to be independent with known probability distributions $\textup{P}_{X_l}$, meaning that the vector of inputs $\bX=\left(X_1,\ldots,X_d\right)$ is distributed as $\textup{P}_{\bX}=\textup{P}_{X_1}\otimes\ldots\otimes \textup{P}_{X_d}$. For any subset $A=\{l_1,\ldots,l_{\vert A\vert}\}\in\Pd$ of indices taken from $\{1,\ldots,d\}$ we denote $\bX_A=\left(X_{l_1},\ldots,X_{l_{\vert A\vert}}\right)\in\mathcal{X}_A=\mathcal{X}_{l_1}\times\ldots\times\mathcal{X}_{l_{\vert A\vert}}$ the vector of inputs with indices in $A$ and $\bX_{-A}$ the complementary vector with indices not in $A$. In this setting, the main objective of global sensitivity analysis is to quantify the impact of any group of input variables $\bX_A$ on the model output $Y$.\\
In this section we first recall the functional ANOVA decomposition and the definition of Sobol' indices, which fall into the category of \textit{variance-based} indices. Sensitivity indices that account for the whole output distribution, referred to as \textit{moment-independent} indices, are then discussed. Note that in the following, we adopt the notation $S$ for a properly normalized sensitivity index, while $\Sr$ will stand for an unnormalized index, where normalization is to be understood as an end result from an ANOVA-like decomposition.

\subsection{ANOVA decomposition and variance-based sensitivity indices}

Here we first assume that $Y\in\mathcal{Y}\subset\mathbb{R}$ is a square integrable scalar output. If the inputs are independent, the function $\eta$ can then be decomposed according to the ANOVA decomposition:

\begin{theorem}[ANOVA decomposition \citep{hoe48,ant84}]
\label{th:anova}
Assume that $\eta:\ \mathcal{X}_1\times\ldots\times\mathcal{X}_d \rightarrow \mathcal{Y}$ is a square integrable function of $d$ independent random variables $X_1,\ldots,X_d$. Then $\eta$ admits a decomposition
\begin{equation*}
Y = \eta(X_1,\ldots,X_d) = \sum_{A \subseteq \Pd} \eta_A(\bX_A),
\end{equation*}
with $\eta_A$ depending only on the variables $\bX_A$ and satisfying
\begin{itemize}
\item[(a)] $\eta_\emptyset = \EE(Y)$,
\item[(b)]  $\EE_{X_l}(\eta_A(\bX_A))=0$ if $l\in A$,
\item[(c)]  $\eta_A(\bX_A) = \sum_{B \subset A} (-1)^{\vert A\vert - \vert B\vert} \EE(Y\vert \bX_B)$.
\end{itemize}
Furthermore, $(b)$ implies that all the terms $\eta_A$ in the decomposition are mutually orthogonal. As a consequence, the output variance can be decomposed as
\begin{equation}
\var\, Y = \sum_{A \subseteq \Pd} \var\, \eta_A(\bX_A) = \sum_{A \subseteq \Pd} V_A  \label{eq:anova}
\end{equation}
where
\begin{equation}
V_A = \sum_{B \subset A} (-1)^{\vert A\vert - \vert B\vert} \var\, \EE(Y\vert \bX_B).  \label{eq:va}
\end{equation}
\end{theorem}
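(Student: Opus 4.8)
The plan is to construct the functions $\eta_A$ explicitly by formula (c) and then verify properties (a), (b), the summation identity, and orthogonality. First I would define, for every $A\subseteq\Pd$,
$$\eta_A(\bX_A) = \sum_{B\subseteq A}(-1)^{\vert A\vert-\vert B\vert}\,\EE(Y\mid\bX_B),$$
where $\EE(Y\mid\bX_\emptyset)=\EE(Y)$; square integrability of $\eta$ guarantees each conditional expectation is a well-defined element of $L^2$, so the finite sums make sense. Property (a) is then immediate. To get the decomposition $Y=\sum_{A\subseteq\Pd}\eta_A(\bX_A)$, I would sum over all $A$, interchange the two finite sums, and use the binomial identity $\sum_{A:\,B\subseteq A\subseteq D}(-1)^{\vert A\vert-\vert B\vert}=\1_{\{B=D\}}$ (taking $D=\{1,\dots,d\}$), so that only the term $\EE(Y\mid\bX_D)=Y$ survives. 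Equivalently one can argue by Möbius inversion on the subset lattice: the map $B\mapsto\EE(Y\mid\bX_B)$ and $A\mapsto\eta_A$ are Möbius transforms of one another, which simultaneously yields both (c) and the reconstruction formula.

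Next I would establish (b): for $l\in A$, compute $\EE_{X_l}\bigl(\eta_A(\bX_A)\bigr)$ by pushing the expectation through the finite sum. Here the independence of the inputs is essential — it lets me write $\EE_{X_l}\EE(Y\mid\bX_B)=\EE(Y\mid\bX_{B\setminus\{l\}})$ for any $B$ (whether or not $l\in B$), since conditioning on $\bX_B$ and then averaging out $X_l$ leaves conditioning on the remaining coordinates. Pairing each subset $B\ni l$ with $B\setminus\{l\}$, the signs $(-1)^{\vert A\vert-\vert B\vert}$ are opposite while the resulting terms coincide, so the sum telescopes to $0$. This pairing argument is the one spot where I expect to be most careful, since it is exactly where the independence hypothesis enters and where a sign slip would be easy.

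With (a), (b), and the reconstruction in hand, mutual orthogonality follows: take $A\neq A'$, say $l\in A\setminus A'$; since $\eta_{A'}(\bX_{A'})$ does not depend on $X_l$, conditioning on $\bX_{-\{l\}}$ and using (b) gives $\EE\bigl(\eta_A\eta_{A'}\bigr)=\EE\bigl(\eta_{A'}\,\EE_{X_l}(\eta_A)\bigr)=0$, again invoking independence so that $X_l$ can be integrated out freely. Finally, applying $\var$ to $Y=\sum_A\eta_A$ and using orthogonality (together with $\EE\eta_A=0$ for $A\neq\emptyset$) yields $\var Y=\sum_{A\subseteq\Pd}\var\,\eta_A$; substituting (c) into $\var\,\eta_A$ and, using independence once more, noting that $\var\,\EE(Y\mid\bX_B)$ is unaffected by the outer centering gives $V_A=\sum_{B\subseteq A}(-1)^{\vert A\vert-\vert B\vert}\var\,\EE(Y\mid\bX_B)$, which is \eqref{eq:va}. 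The only genuine obstacle is bookkeeping of signs in the telescoping/Möbius step; everything else is routine manipulation of finite sums and conditional expectations.
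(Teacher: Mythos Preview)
The paper does not actually prove Theorem~\ref{th:anova}: it is stated as a classical result with citations to Hoeffding and Antoniadis, and no proof appears in the text or the appendix. So there is no ``paper's proof'' to compare against; your plan is the standard textbook argument and is essentially correct.

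One point deserves tightening. In your final step you write that ``substituting (c) into $\var\,\eta_A$'' yields $V_A=\sum_{B\subseteq A}(-1)^{\vert A\vert-\vert B\vert}\var\,\EE(Y\mid\bX_B)$. That substitution does not work directly, since variance is not linear in its argument: $\var\bigl(\sum_B(-1)^{\vert A\vert-\vert B\vert}\EE(Y\mid\bX_B)\bigr)$ is not the alternating sum of the individual variances. The clean route is to use the \emph{inverse} M\"obius relation you already have, namely
\[
\EE(Y\mid\bX_B)=\sum_{C\subseteq B}\eta_C(\bX_C),
\]
together with the orthogonality you just proved, to obtain $\var\,\EE(Y\mid\bX_B)=\sum_{C\subseteq B}V_C$; then M\"obius inversion on this identity gives \eqref{eq:va}. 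This is a cosmetic fix rather than a genuine gap, but as written your justification for \eqref{eq:va} is not quite an argument.
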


When this decomposition holds, it is then straightforward to quantify the influence of any subset of inputs $\bX_A$ on the output variance by normalizing each term with $\var\, Y$.
\begin{definition}[Sobol' indices \citep{sob93}]
Under the same assumptions of Theorem \ref{th:anova}, the Sobol' sensitivity index associated to a subset $A$ of input variables is defined as
\begin{equation}
S_A = \frac{V_A}{\var\, Y}, \label{eq:SA}
\end{equation}
while the total Sobol' index associated to $A$ is
\begin{equation}
S^T_A = \sum_{B\subseteq \Pd,\, B\cap A \neq \emptyset}S_B. \label{eq:STA}
\end{equation}
In particular, the first-order Sobol' index of an input $X_l$ writes
\begin{equation*}
S_l = \frac{\var\, \EE(Y\vert X_l)}{\var\, Y}
\end{equation*}
and its total Sobol' index is given by
\begin{equation*}
S^T_l = \sum_{B\subseteq \Pd,\, l\in B}S_B = 1 - \frac{\var\, \EE(Y\vert \bX_{-l})}{\var\, Y}.
\end{equation*}
Finally, the ANOVA decomposition (\ref{eq:anova}) readily provides an interpretation of Sobol' indices as a percentage of explained output variance, \textit{i.e.}
\begin{equation}
\sum_{A \subseteq \Pd} S_A = 1. \label{eq:sumSA}
\end{equation}
\end{definition}

With these definitions, the impact of each input variable can be quantitatively assessed: the first-order Sobol' index measures the main effect of an input, while the total Sobol' index aggregates all its potential interactions with other inputs. As an illustration, an input variable with low total Sobol' index is thus unimportant and one can freeze it at a default value. When for a given input both first-order and total Sobol' indices are close, this means that this input does not have interactions, while a large gap indicates strong interactions in the model. Furthermore, due to the summation property (\ref{eq:sumSA}), the interpretation of Sobol' indices as shares of the output variance is an efficient tool for practitioners who aim at understanding precisely the impact and interactions of the inputs of a model on the output. For example the interaction of two inputs $X_l$ and $X_{l'}$ writes
\begin{equation}
S_{ll'} = \frac{\var\, \EE(Y\vert X_l,\, X_{l'}) - \var\, \EE(Y\vert X_l) - \var\, \EE(Y\vert X_{l'})}{\var\, Y} = \frac{\var\, \EE(Y\vert X_l,\, X_{l'})}{\var\, Y} - S_l - S_{l'}. \label{eq:inter}
\end{equation}
Note that to compute this interaction one subtracts the first-order indices $S_l$ and $S_{l'}$ from the sensitivity index of the subset $\left(X_l,X_{l'}\right)$ in order to remove the main effects and highlight the interaction only.

\subsection{Moment-independent sensitivity indices}

Despite their appealing properties, Sobol' indices rank the input variables according to their impact on the output variance only. In a parallel line of work, several authors proposed to investigate instead how inputs influence the whole output distribution, thus introducing a different insight on the inputs/outputs relationship. The starting point (\cite{baucells13}, \cite{sdv15}) is to consider that a given input $X_l$ is important in the model if the probability distribution $\textup{P}_Y$ of the output changes when $X_l$ is fixed, \textit{i.e.} if the conditional probability distribution $\textup{P}_{Y\vert X_l}$ is different from $\textup{P}_Y$. More precisely, if $d(\cdot,\cdot)$ denotes a dissimilarity measure between probability distributions, one can define a sensitivity index for variable $X_l$ given by
\begin{equation}
\Sr_l = \EE_{X_l}\left(d(\textup{P}_Y,\textup{P}_{Y\vert X_l})\right). \label{eq:dis}
\end{equation}
Such a general formulation is flexible, in the sense that many choices for $d(\cdot,\cdot)$ are available. As an illustration, it is straightforward to show that the unnormalized first-order Sobol' index is retrieved with the naive dissimilarity measure $d(\textup{P},\textup{Q})=\left(\EE_{\xi\sim \textup{P}}(\xi)-\EE_{\xi\sim \textup{Q}}(\xi)\right)^2$, which compares probability distributions only through their means. A large class of dissimilarity measures is also given by the so-called \textit{f-divergence} family: assuming that $(X_l,Y)$ has an absolute continuous distribution with respect to the Lebesgue measure on $\R^2$, the f-divergence between $\textup{P}_Y$ and $\textup{P}_{Y\vert X_l}$ is
\begin{equation*}
d_f(\textup{P}_Y,\textup{P}_{Y\vert X_l}) = \int f\left(\frac{p_Y(y)}{p_{Y\vert X_l}(y)}\right)p_{Y\vert X_l}(y)dy
\end{equation*}
where $f$ is a convex function such that $f(1)=0$ and $p_Y$ and $p_{Y\vert X_l}$ are the probability distribution functions of $Y$ and $Y\vert X_l$, respectively. The corresponding sensitivity index is then 
\begin{equation*}
\Sr^f_l = \int  f\left(\frac{p_Y(y)p_{X_l}(x)}{p_{X_l,Y}(x,y)}\right)p_{X_l,Y}(x,y)dxdy
\end{equation*}
with $p_{X_l}$ and $p_{X_l,Y}$ the probability distribution functions of $X_l$ and $(X_l,Y)$, respectively. This index has been studied for example in \cite{sdv15} and \cite{rah16}. A notable special case is obtained with the total-variation distance corresponding to $f(t)=\vert t-1\vert$, leading to the sensitivity index proposed by Borgonovo \citep{bor07}:
\begin{equation*}
\Sr^{TV}_l = \int \vert p_Y(y)p_{X_l}(x) - p_{X_l,Y}(x,y)\vert dxdy.
\end{equation*}
Obviously, definition (\ref{eq:dis}) can be easily extended to measure the influence of any subset of inputs $\Sr_A = \EE_{\bX_A}\left(d(P_Y,P_{Y\vert \bX_A})\right)$. But in this case, since there is no ANOVA-like decomposition, there is no longer the guarantee that an interaction index defined following (\ref{eq:inter}):
\begin{equation*}
\Sr^{TV}_{ll'} = \int \vert p_Y(y)p_{X_l}(x)p_{X_{l'}}(x') - p_{X_l,X_{l'},Y}(x,x',y)\vert dxdx'dy - \Sr^{TV}_l - \Sr^{TV}_{l'}
\end{equation*}
really measures the pure interaction between $X_l$ and $X_{l'}$. Therefore the interpretation of higher-order moment-independent sensitivity indices is cumbersome. On the other hand, even if normalization constants have been proposed through general inequalities on f-divergences (\cite{bor07}, \cite{rah16}), the lack of an ANOVA decomposition once again impedes the definition of a natural normalization constant equivalent to the output variance for Sobol' indices.\\

Recently, new moment-independent indices built upon the framework of RKHS embedding of probability distributions have also been investigated \citep{sdv15,sdv16}. Though originally introduced as an alternative to reduce the curse of dimensionality and make the most of the vast kernel literature, we will see in what follows that they actually exhibit an ANOVA-like decomposition and can therefore be seen as a general kernelized version of Sobol' indices.

\section{Kernel-based sensitivity analysis} \label{sec:kernel}

Before introducing the kernel-based sensitivity indices, we first review some elements of the RKHS embedding of probability distributions \citep{smola07}, which will serve as a building block for their definition.

\subsection{RKHS embedding of distributions}

We first introduce a RKHS $\HH$ of functions $\mathcal{X}\rightarrow \R$ with kernel $k_{\mathcal{X}}$ and dot product $\langle\cdot,\cdot\rangle_{\HH}$.
The \textit{kernel mean embedding} $\mu_{\textup{P}}\in\HH$ of a probability distribution $\textup{P}$ on $\mathcal{X}$ is defined as
\begin{equation*}
\mu_{\textup{P}} = \EE_{\xi\sim \textup{P}}k_{\mathcal{X}}(\xi,\cdot) = \int_{\mathcal{X}} k_{\mathcal{X}}(\xi,\cdot) d\textup{P}(\xi)
\end{equation*}
if $\EE_{\xi\sim \textup{P}}k_{\mathcal{X}}(\xi,\xi)<\infty$, see \cite{smola07}. The representation $\mu_{\textup{P}}$ is appealing because, if the kernel $k_{\mathcal{X}}$ is characteristic, the map $\textup{P}\rightarrow \mu_{\textup{P}}$ is injective \citep{sri09,sri10}. Consequently, the kernel mean embedding can be used in lieu of the probability distribution for several comparisons and manipulations of probability measures but using only inner products or distances in the RKHS.
For example, a distance between two probability measures $\textup{P}_1$ and $\textup{P}_2$ on $\mathcal{X}$ can simply be obtained by computing the distance between their representations in $\HH$, \textit{i.e.}
\begin{equation*}
\M(\textup{P}_1,\textup{P}_2) = \Vert \mu_{\textup{P}_1} - \mu_{\textup{P}_2} \Vert_{\HH},
\end{equation*}
which is a distance if the kernel $k_{\mathcal{X}}$ is characteristic \citep{sri09,sri10}. This distance is called the \textit{maximum mean discrepancy} (MMD) and it has been recently used in many applications \citep{mua12,szabo16}. Indeed, using the reproducing property of a RKHS one may show \citep{song08} that
\begin{equation*}
\M^2(\textup{P}_1,\textup{P}_2) = \EE_{\xi,\xi'}k_{\mathcal{X}}(\xi,\xi') - 2 \EE_{\xi,\zeta}k_{\mathcal{X}}(\xi,\zeta) + \EE_{\zeta,\zeta'}k_{\mathcal{X}}(\zeta,\zeta') 
\end{equation*}
where $\xi,\xi'\sim \textup{P}_1$ and $\zeta,\zeta'\sim \textup{P}_2$ with $\xi,\ \xi',\ \zeta,\ \zeta'$ independent, this notation being used throughout the rest of the paper. This means that the MMD can be computed with expectations of kernels only, unlike other distances between probability distributions which will typically require density estimation.\\

Another significant application of kernel embeddings concerns the problem of measuring the dependence between random variables. Given a pair of random vectors $(\bU,\bV)\in\mathcal{X}\times\mathcal{Y}$ with probability distribution $\textup{P}_{\bU\bV}$, we define the product RKHS $\HH=\F\times\G$ with kernel $k_H((\bu,\bv),(\bu',\bv'))=k_\mathcal{X}(\bu,\bu')k_\mathcal{Y}(\bv,\bv')$. A measure of the dependence between $\bU$ and $\bV$ can then be defined as the distance between the mean embedding of $\textup{P}_{\bU\bV}$ and $\textup{P}_{\bU}\otimes \textup{P}_{\bV}$, the joint distribution with independent marginals $\textup{P}_{\bU}$ and $\textup{P}_{\bV}$:
\begin{equation*}
\M^2(\textup{P}_{\bU\bV},\textup{P}_{\bU}\otimes \textup{P}_{\bV}) = \Vert \mu_{\textup{P}_{\bU\bV}} - \mu_{\textup{P}_{\bU}}\otimes \mu_{\textup{P}_{\bV}}\Vert_{\HH}.
\end{equation*}
This measure is the so-called \textit{Hilbert-Schmidt independence criterion} (HSIC, see \cite{gretton05a,gretton05b}) and can be expanded as
\begin{eqnarray}
\HS(\bU,\bV) &=& \M^2(\textup{P}_{\bU\bV},\textup{P}_{\bU}\otimes \textup{P}_{\bV})\nonumber \\
 &=& \EE_{\bU,\bU',\bV,\bV'} k_\mathcal{X}(\bU,\bU')k_\mathcal{Y}(\bV,\bV')\nonumber \\
&+& \EE_{\bU,\bU'} k_\mathcal{X}(\bU,\bU')\EE_{\bV,\bV'} k_\mathcal{Y}(\bV,\bV')\nonumber \\
&-& 2 \EE_{\bU,\bV}\left[\EE_{\bU'} k_\mathcal{X}(\bU,\bU')\EE_{\bV'} k_\mathcal{Y}(\bV,\bV')\right] \label{eq:hsic}
\end{eqnarray}
where $(\bU',\bV')$ is an independent copy of $(\bU,\bV)$. Once again, the reproducing property implies that HSIC can be expressed as expectations of kernels, which facilitates its estimation when compared to other dependence measures such as the mutual information.

\subsection{Kernel-based ANOVA decomposition}

The RKHS framework introduced above can readily be used to define kernel-based sensitivity indices. The first approach relies on the MMD, while the second one builds upon HSIC. We discuss them below and show that in particular both of them admit an ANOVA-like decomposition.

\subsubsection{MMD-based sensitivity index}
The first natural idea is to come back to the general formulation for moment-independent indices (\ref{eq:dis}) and use the MMD as the dissimilarity measure to compare $\textup{P}_Y$ and $\textup{P}_{Y\vert X_l}$ as proposed in \cite{sdv16}:
\begin{eqnarray*}
\Sr_l^{\M} &=& \EE_{X_l} \M^2(\textup{P}_Y,\textup{P}_{Y\vert X_l})\\
&=& \EE_{X_l} \EE_{\xi,\xi'\sim \textup{P}_Y}k_{\mathcal{Y}}(\xi,\xi') - 2 \EE_{X_l} \EE_{\xi\sim \textup{P}_Y,\zeta\sim \textup{P}_{Y\vert X_l}}k_{\mathcal{Y}}(\xi,\zeta) + \EE_{X_l}\EE_{\zeta,\zeta'\sim \textup{P}_{Y\vert X_l}}k_{\mathcal{Y}}(\zeta,\zeta') \\
&=& \EE_{X_l} \EE_{\zeta,\zeta'\sim \textup{P}_{Y\vert X_l}}k_{\mathcal{Y}}(\zeta,\zeta') - \EE_{\xi,\xi'\sim \textup{P}_Y}k_{\mathcal{Y}}(\xi,\xi')
\end{eqnarray*}
where we have defined a RKHS $\G$ of functions $\mathcal{Y}\rightarrow \R$ with kernel $k_{\mathcal{Y}}$.
More generally, we can also consider the unnormalized MMD-based sensitivity index for a group of variables $\bX_A$ given by $\EE_{\bX_A}\left(\textup{MMD}^2(\textup{P}_Y,\textup{P}_{Y \vert \bX_A})\right)=\EE_{\bX_A} \EE_{\zeta,\zeta'\sim \textup{P}_{Y\vert \bX_A}}k_{\mathcal{Y}}(\zeta,\zeta') - \EE_{\xi,\xi'\sim \textup{P}_Y}k_{\mathcal{Y}}(\xi,\xi')$, provided the following assumption holds:
\begin{assumption}
\label{as:finitekernely}
$\forall A\subseteq\Pd$ and $\forall\bx_A\in\mathcal{X}_A$, $\EE_{\xi\sim\textup{P}_{Y\vert\bX_A=\bx_A}}k_{\mathcal{Y}}(\xi,\xi)<\infty$ with the convention $\textup{P}_{Y\vert\bX_A}=\textup{P}_{Y}$ if $A=\emptyset$.
\end{assumption}

First note that if we focus on the scalar output case $\mathcal{Y}\subset\R$ with the linear kernel $k_{\mathcal{Y}}(y,y')=yy'$, we have
\begin{eqnarray*}
& \Sr_{A}^{\M} &= \EE_{\bX_A}\left(\EE_{\xi\sim \textup{P}_Y}(\xi) - \EE_{\zeta\sim \textup{P}_{Y \vert \bX_A}}(\zeta)\right)^2 \\
& &= \EE_{\bX_A}\left(\EE Y - \EE(Y \vert \bX_A)\right)^2 \\
& &= \var\, \EE(Y \vert \bX_A),
\end{eqnarray*}
that is, we recover the unnormalized Sobol' index for $\bX_A$. $\Sr_A^{\M}$ can thus be seen as a kernelized version of Sobol' indices since the latter can be retrieved with a specific kernel. However it is obvious that since the linear kernel is not characteristic, the MMD in this case is not a distance, which means that $\Sr_A^{\M}$ is no longer a moment-independent index.\\
To make another connection with Sobol' indices, we now recall Mercer's theorem, a notable representation theorem for kernels.
 \begin{theorem}[Mercer, see \cite{aub00}]
Suppose $k_{\mathcal{Y}}$ is a continuous symmetric positive definite kernel on a compact set $\mathcal{Y}$ and consider the integral operator $T_{k_{\mathcal{Y}}}:\ \mathbb{L}^2(\mathcal{Y}) \rightarrow \mathbb{L}^2(\mathcal{Y})$ defined by
\begin{equation*}
\left(T_{k_{\mathcal{Y}}}f\right)(x) = \int_{\mathcal{Y}} k_{\mathcal{Y}}(y,u)f(u)du.
\end{equation*}
Then there is an orthonormal basis $\{e_r\}$ of $\mathbb{L}^2(\mathcal{Y})$ consisting of eigenfunctions of $T_{k_{\mathcal{Y}}}$ such that the corresponding sequence of eigenvalues $\{\lambda_r\}$ are non-negative. The eigenfunctions corresponding to non-zero eigenvalues are continuous on $\mathcal{Y}$ and $k_{\mathcal{Y}}$ has the following representation
\begin{equation*}
k_{\mathcal{Y}}(y,y') = \sum_{r=1}^{\infty} \lambda_r e_r(y) e_r(y')
\end{equation*}
where the convergence is absolute and uniform.
\end{theorem}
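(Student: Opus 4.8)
The plan is to realise $T_{k_{\mathcal{Y}}}$ as a compact, self-adjoint, positive operator on $\mathbb{L}^2(\mathcal{Y})$, invoke the spectral theorem for such operators, and then do the real work, which is upgrading the resulting $\mathbb{L}^2$ kernel expansion to an absolutely and uniformly convergent one.

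First I would establish the three structural properties of $T_{k_{\mathcal{Y}}}$. Since $k_{\mathcal{Y}}$ is continuous on the compact set $\mathcal{Y}\times\mathcal{Y}$ it is bounded, hence $k_{\mathcal{Y}}\in\mathbb{L}^2(\mathcal{Y}\times\mathcal{Y})$, so $T_{k_{\mathcal{Y}}}$ is Hilbert--Schmidt and in particular compact; symmetry of $k_{\mathcal{Y}}$ gives self-adjointness; and positive definiteness of the kernel, applied first to Riemann sums of $\langle T_{k_{\mathcal{Y}}}f,f\rangle=\int\int k_{\mathcal{Y}}(y,u)f(y)f(u)\,dy\,du$ for continuous $f$ and then extended by density of continuous functions in $\mathbb{L}^2(\mathcal{Y})$, gives $\langle T_{k_{\mathcal{Y}}}f,f\rangle\ge 0$ for all $f$. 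The spectral theorem then furnishes an orthonormal basis $\{e_r\}$ of $\mathbb{L}^2(\mathcal{Y})$ of eigenfunctions with real eigenvalues $\{\lambda_r\}$ accumulating only at $0$, and positivity forces $\lambda_r\ge 0$. When $\lambda_r\neq 0$, writing $e_r=\lambda_r^{-1}T_{k_{\mathcal{Y}}}e_r=\lambda_r^{-1}\int k_{\mathcal{Y}}(\cdot,u)e_r(u)\,du$ and using uniform continuity of $k_{\mathcal{Y}}$ on the compact product shows $e_r$ is continuous.

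Next I would pin down the expansion in $\mathbb{L}^2$ and control the diagonal. Since $\{e_r\otimes e_s\}$ is an orthonormal basis of $\mathbb{L}^2(\mathcal{Y}\times\mathcal{Y})$ and $\langle k_{\mathcal{Y}},e_r\otimes e_s\rangle=\langle T_{k_{\mathcal{Y}}}e_r,e_s\rangle=\lambda_r\delta_{rs}$, we get $k_{\mathcal{Y}}=\sum_r\lambda_r e_r\otimes e_r$ in $\mathbb{L}^2(\mathcal{Y}\times\mathcal{Y})$; likewise, for fixed $y$ the function $k_{\mathcal{Y}}(y,\cdot)$ has generalized Fourier coefficients $\langle k_{\mathcal{Y}}(y,\cdot),e_r\rangle=(T_{k_{\mathcal{Y}}}e_r)(y)=\lambda_r e_r(y)$, so $k_{\mathcal{Y}}(y,\cdot)=\sum_r\lambda_r e_r(y)e_r(\cdot)$ in $\mathbb{L}^2(\mathcal{Y})$. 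The key point is that for each $n$ the residual $k_n(y,y')=k_{\mathcal{Y}}(y,y')-\sum_{r=1}^n\lambda_r e_r(y)e_r(y')$ is continuous and is the kernel of the operator $T_{k_{\mathcal{Y}}}-\sum_{r\le n}\lambda_r(e_r\otimes e_r)$, whose quadratic form equals $\sum_{s>n}\lambda_s|\langle f,e_s\rangle|^2\ge 0$; a continuous kernel that is nonnegative in this integrated sense is pointwise nonnegative on the diagonal (test against approximate identities concentrating at a point), so $\sum_{r=1}^n\lambda_r e_r(y)^2\le k_{\mathcal{Y}}(y,y)$ for every $n$ and $y$.

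Finally I would assemble everything. The last bound shows $\sigma(y):=\sum_r\lambda_r e_r(y)^2$ converges pointwise, with $\sigma(y)\le k_{\mathcal{Y}}(y,y)$; for fixed $y$ the series $\sum_r\lambda_r e_r(y)e_r(y')$ then converges uniformly in $y'$ by Cauchy--Schwarz against this bounded diagonal, and it also converges to $k_{\mathcal{Y}}(y,\cdot)$ in $\mathbb{L}^2(\mathcal{Y})$; as $k_{\mathcal{Y}}(y,\cdot)$ is continuous, the two limits agree, so $\sigma(y)=k_{\mathcal{Y}}(y,y)$, which is continuous. The partial sums $s_n(y)=\sum_{r\le n}\lambda_r e_r(y)^2$ are continuous, nondecreasing, and converge pointwise to the continuous $\sigma$ on the compact $\mathcal{Y}$, so Dini's theorem gives uniform convergence; hence $\sup_{y,y'}\bigl|\sum_{r=m}^n\lambda_r e_r(y)e_r(y')\bigr|\le\sup_y\bigl(s_n(y)-s_{m-1}(y)\bigr)\to 0$, making the double series uniformly and absolutely convergent, and its continuous sum, which equals $k_{\mathcal{Y}}$ in $\mathbb{L}^2$, equals $k_{\mathcal{Y}}$ everywhere. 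I expect the main obstacle to be exactly this $\mathbb{L}^2$-to-uniform passage, and within it the pointwise diagonal identity $\sum_r\lambda_r e_r(y)^2=k_{\mathcal{Y}}(y,y)$ that unlocks Dini's theorem; once that is in hand the rest is bookkeeping with the spectral theorem and Cauchy--Schwarz.
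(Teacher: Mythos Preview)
Your proposal is a correct and essentially complete sketch of the classical proof of Mercer's theorem: compactness and self-adjointness of the integral operator, positivity via approximation, the spectral theorem, continuity of eigenfunctions from the eigen-equation, the diagonal bound from positivity of the residual kernel, and the Dini argument to pass from pointwise to uniform convergence. This is the standard route found in most functional-analysis references.

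Note, however, that the paper does not provide its own proof of this statement at all: Mercer's theorem is quoted as a classical result with a reference to \cite{aub00}, and is then used as a tool in the proofs of Theorems~\ref{th:MMDanova} and~\ref{th:HSICanova}. So there is nothing to compare your argument against in the paper itself; your write-up simply supplies a proof where the paper gives only a citation.
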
\label{th:mercerrkhs}
Assume now that the output $Y\in\mathcal{Y}$ with $\mathcal{Y}$ a compact set, meaning that Mercer's theorem holds. Then $k_{\mathcal{Y}}$ admits a representation 
\begin{equation*}
k_{\mathcal{Y}}(y,y') = \sum_{r=1}^{\infty} \phi_r(y)\phi_r(y')
\end{equation*}
where $\phi_r(y)=\sqrt{\lambda_r} e_r(y)$ are orthogonal functions in $\mathbb{L}^2(\mathcal{Y})$. In this setting we can write
\begin{eqnarray*}
 \Sr_{A}^{\M} = \EE_{{\bX}_A}\left(\M^2(\textup{P}_{Y},\textup{P}_{Y \vert {\bX}_A})\right) &=& \EE_{{\bX}_A} \EE_{\xi,\xi'\sim\textup{P}_{Y \vert {\bX}_A}} k_{\mathcal{Y}}(\xi,\xi') - \EE_{\zeta,\zeta'\sim\textup{P}} k_{\mathcal{Y}}(\zeta,\zeta') \\
 &=& \EE_{{\bX}_A} \EE_{\xi,\xi'\sim\textup{P}_{Y | {\bX}_A}}\left(\sum_{r=1}^{\infty} \phi_r(\xi)\phi_r(\xi')\right)\\
 & & - \EE_{\zeta,\zeta'\sim\textup{P}}\left(\sum_{r=1}^{\infty} \phi_r(\zeta)\phi_r(\zeta')\right).
\end{eqnarray*}
Now, since the convergence of the series is absolute, we can interchange the expectations and the summations to get
\begin{eqnarray}
\Sr_{A}^{\M} &=& \sum_{r=1}^{\infty} \bigg\{\EE_{{\bX}_A} \EE_{\xi,\xi'\sim\textup{P}_{Y \vert {\bX}_A}}\left(\phi_r(\xi)\phi_r(\xi')\right)  - \EE_{\zeta,\zeta'\sim\textup{P}}\left(\phi_r(\zeta)\phi_r(\zeta')\right)\bigg\}\nonumber \\
 &=& \sum_{r=1}^{\infty} \bigg\{\EE_{{\bX}_A}\EE\left(\phi_r(Y)\vert{\bX}_A\right)^2 - \EE\left(\phi_r(Y)\right)^2\bigg\}\nonumber \\
 &=& \sum_{r=1}^{\infty} \var\, \EE\left(\phi_r(Y)|{\bX}_A\right). \label{eq:MMDindexgroup}
\end{eqnarray}

In other words, the MMD-based sensitivity index $\Sr_{A}^{\M}$ generalizes the Sobol' one in the sense that it measures the impact of the inputs not only on the conditional expectation of the output, but on a possibly infinite number of transformations $\phi_r$ of the output, given by the eigenfunctions of the kernel.\\

We can now state the main theorem of this section on the ANOVA-like decomposition for $\Sr_{A}^{\M}$. Recall that the variance decomposition (\ref{th:anova}) states that the variance of the output can be decomposed as $\var\, Y = \sum_{A\subseteq\Pd} V_A$ where each term is given by
\begin{equation*}
V_A = \sum_{B \subset A}(-1)^{|A|-|B|}\var\, \EE ( Y|{\bX}_B).
\end{equation*}
The MMD-based equivalent is obtained with the following theorem.
\begin{theorem}[ANOVA decomposition for MMD]
\label{th:MMDanova}
Under the same assumptions of Theorem \ref{th:anova} (in particular, the random vector $\bX$ has independent components) and with Assumption \ref{as:finitekernely}, denote $\M^2_{\textup{tot}}=\EE k_{\mathcal{Y}}(Y,Y)-\EE k_{\mathcal{Y}}(Y,Y')$ where $Y'$ is an independent copy of $Y$. Then the total MMD can be decomposed as
\begin{equation*}
\M^2_{\textup{tot}} = \sum_{A\subseteq\Pd} \M^2_A
\end{equation*}
where each term is given by
\begin{equation*}
\M^2_A = \sum_{B \subset A}(-1)^{|A|-|B|}\EE_{{\bX}_B}\left(\M^2(\textup{P}_{Y},\textup{P}_{Y \vert {\bX}_B})\right) .
\end{equation*}
\end{theorem}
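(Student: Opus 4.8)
The plan is to reduce the claim to the classical ANOVA decomposition of Theorem~\ref{th:anova}, applied not to $Y$ itself but to each Mercer feature $\phi_r(Y)$ of the output, and then to sum over $r$. Assuming $\mathcal{Y}$ compact so that the Mercer representation $k_{\mathcal{Y}}(y,y')=\sum_{r\ge1}\phi_r(y)\phi_r(y')$ holds, each map $\phi_r\circ\eta$ is a square-integrable function of the independent inputs $X_1,\ldots,X_d$: indeed $\EE\,\phi_r(Y)^2\le\EE\sum_{s\ge1}\phi_s(Y)^2=\EE\,k_{\mathcal{Y}}(Y,Y)$, the interchange being justified by monotone convergence since the summands $\phi_s(Y)^2$ are non-negative, and the right-hand side being finite by Assumption~\ref{as:finitekernely} (or simply because $k_{\mathcal{Y}}$ is continuous on the compact $\mathcal{Y}\times\mathcal{Y}$). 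Theorem~\ref{th:anova} then yields, for every $r$,
$$\var\,\phi_r(Y)=\sum_{A\subseteq\Pd}V_A^{(r)},\qquad V_A^{(r)}=\sum_{B\subset A}(-1)^{|A|-|B|}\var\,\EE(\phi_r(Y)\mid\bX_B).$$

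Next I would sum these identities over $r\ge1$. Because every sum over subsets $A$ or $B$ is finite and $\sum_{r\ge1}\var\,\EE(\phi_r(Y)\mid\bX_B)=\Sr^{\M}_B<\infty$ by~(\ref{eq:MMDindexgroup}), the summation over $r$ commutes freely with those finite sums, giving
$$\sum_{r\ge1}V_A^{(r)}=\sum_{B\subset A}(-1)^{|A|-|B|}\,\Sr^{\M}_B=\sum_{B\subset A}(-1)^{|A|-|B|}\EE_{\bX_B}\!\left(\M^2(\textup{P}_Y,\textup{P}_{Y\mid\bX_B})\right)=\M^2_A,$$
so that $\sum_{A\subseteq\Pd}\M^2_A=\sum_{r\ge1}\var\,\phi_r(Y)$. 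It then remains to identify $\sum_{r\ge1}\var\,\phi_r(Y)$ with $\M^2_{\textup{tot}}$. Writing $\var\,\phi_r(Y)=\EE\,\phi_r(Y)^2-\left(\EE\,\phi_r(Y)\right)^2$ and summing, the first series gives $\sum_{r\ge1}\EE\,\phi_r(Y)^2=\EE\,k_{\mathcal{Y}}(Y,Y)$ by monotone convergence on the diagonal of Mercer's expansion, while for the second $\sum_{r\ge1}\left(\EE\,\phi_r(Y)\right)^2=\sum_{r\ge1}\EE_Y\phi_r(Y)\,\EE_{Y'}\phi_r(Y')=\EE_{Y,Y'}\,k_{\mathcal{Y}}(Y,Y')$, where the interchange of $\sum_{r}$ and $\EE_{Y,Y'}$ is licensed by dominated convergence with dominating bound $\sum_{r\ge1}|\phi_r(Y)\phi_r(Y')|\le\sqrt{k_{\mathcal{Y}}(Y,Y)\,k_{\mathcal{Y}}(Y',Y')}\le\sup_{y\in\mathcal{Y}}k_{\mathcal{Y}}(y,y)<\infty$ obtained from Cauchy--Schwarz in $\ell^2$. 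Hence $\sum_{r\ge1}\var\,\phi_r(Y)=\EE\,k_{\mathcal{Y}}(Y,Y)-\EE\,k_{\mathcal{Y}}(Y,Y')=\M^2_{\textup{tot}}$, which finishes the proof.

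The main obstacle is purely the bookkeeping around these interchanges of the infinite Mercer series with expectations; they all reduce either to monotone convergence for the non-negative diagonal series or to a single Cauchy--Schwarz domination for the off-diagonal one, so no genuine analytic difficulty arises once compactness of $\mathcal{Y}$ (or Assumption~\ref{as:finitekernely}) is in force. As an independent check that sidesteps Mercer altogether, one may argue combinatorially: summing the definition of $\M^2_A$ over all $A\subseteq\Pd$ and exchanging the order of summation, the inner coefficient $\sum_{A:\,B\subseteq A\subseteq\{1,\ldots,d\}}(-1)^{|A|-|B|}$ equals $\sum_{C\subseteq\{1,\ldots,d\}\setminus B}(-1)^{|C|}$, which vanishes unless $B=\{1,\ldots,d\}$; thus $\sum_{A\subseteq\Pd}\M^2_A=\Sr^{\M}_{\{1,\ldots,d\}}$, and since $\eta$ is deterministic one has $\textup{P}_{Y\mid\bX}=\delta_{\eta(\bX)}$, whence $\Sr^{\M}_{\{1,\ldots,d\}}=\EE\,k_{\mathcal{Y}}(Y,Y)-\EE\,k_{\mathcal{Y}}(Y,Y')=\M^2_{\textup{tot}}$ directly. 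The Mercer route is nonetheless preferable here because it additionally establishes the term-by-term identity $\M^2_A=\sum_{r\ge1}V_A^{(r)}$, which makes transparent that the components $\M^2_A$ inherit the non-negativity and orthogonality structure of the classical ANOVA terms.
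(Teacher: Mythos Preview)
Your proof is correct and follows essentially the same route as the paper: apply Mercer's expansion of $k_{\mathcal{Y}}$, invoke the classical ANOVA decomposition (Theorem~\ref{th:anova}) for each feature $\phi_r(Y)$, and sum over $r$ to recover both $\M^2_{\textup{tot}}$ and the $\M^2_A$ terms. Your treatment of the interchanges is in fact more careful than the paper's (which packages the argument via an auxiliary $W=\sum_r\phi_r(Y)$ and appeals loosely to orthogonality), and your closing combinatorial check via M\"obius-type cancellation is a nice independent confirmation that the paper does not include.
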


The proof is given in Appendix \ref{sec:MMDanova}. Theorem \ref{th:MMDanova} is very similar to the ANOVA one given in (\ref{th:anova}):
one can note that the total variance of the output is replaced by a generalized variance $\M^2_{\textup{tot}}$ defined by the kernel, and that each subset effect is obtained by removing lesser order ones in the MMD distance of the conditional distributions (instead of the variance of the conditional expectations in the ANOVA).
The following corollary states that these two decompositions coincide when the kernel is chosen as the linear one.
\begin{corollary}
\label{cor:MMDsobol}
When $Y\in\mathcal{Y}\subset\R$ and $k_{\mathcal{Y}}(y,y')=yy'$ in Theorem \ref{th:MMDanova}, the decomposition is identical to the decomposition (\ref{th:anova}), which means that
\[\M^2_{\textup{tot}} = \var\; Y \mbox{ and } \forall B\in\Pd,\ \EE_{{\bX}_B}\left(\M^2(\textup{P}_{Y},\textup{P}_{Y \vert {\bX}_B})\right) = \var\, \EE (Y\vert{\bX}_B).\]
It further implies $\forall A\subseteq\Pd,\ \M^2_A = V_A$.
\end{corollary}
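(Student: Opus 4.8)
The plan is to substitute the linear kernel $k_{\mathcal{Y}}(y,y')=yy'$ directly into the two quantities appearing in Theorem \ref{th:MMDanova} and to check that each one collapses to its variance-based counterpart; the equality $\M^2_A=V_A$ then follows term by term from the definition (\ref{eq:va}).

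First I would handle the normalization constant. By definition $\M^2_{\textup{tot}}=\EE k_{\mathcal{Y}}(Y,Y)-\EE k_{\mathcal{Y}}(Y,Y')$ with $Y'$ an independent copy of $Y$. With the linear kernel this reads $\EE(Y^2)-\EE(YY')$, and since $Y'$ is independent of $Y$ with the same law, $\EE(YY')=(\EE Y)^2$, so $\M^2_{\textup{tot}}=\EE(Y^2)-(\EE Y)^2=\var\,Y$. Here Assumption \ref{as:finitekernely} only requires $Y$ to be conditionally square integrable, which is implied by the square integrability of $Y$ already assumed in Theorem \ref{th:anova}.

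Next I would treat the conditional term $\EE_{\bX_B}\bigl(\M^2(\textup{P}_Y,\textup{P}_{Y\vert\bX_B})\bigr)$, which is exactly the unnormalized index $\Sr_B^{\M}$. The computation already carried out in the text right after its definition applies verbatim: expanding $\M^2(\textup{P}_Y,\textup{P}_{Y\vert\bX_B})$ and using that the two draws $\zeta,\zeta'\sim\textup{P}_{Y\vert\bX_B}$ are conditionally independent given $\bX_B$ gives $\EE_{\zeta,\zeta'}(\zeta\zeta')=\EE(Y\vert\bX_B)^2$ and $\EE_{\xi,\xi'}(\xi\xi')=(\EE Y)^2$, whence $\Sr_B^{\M}=\EE_{\bX_B}\bigl(\EE(Y\vert\bX_B)^2\bigr)-(\EE Y)^2=\var\,\EE(Y\vert\bX_B)$ by the law of total variance. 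The empty-set convention is consistent, since $\M^2(\textup{P}_Y,\textup{P}_Y)=0=\var\,\EE(Y)$.

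Finally, plugging this identity into the expression for $\M^2_A$ from Theorem \ref{th:MMDanova},
\[
\M^2_A=\sum_{B \subset A}(-1)^{|A|-|B|}\EE_{\bX_B}\bigl(\M^2(\textup{P}_Y,\textup{P}_{Y\vert\bX_B})\bigr)=\sum_{B \subset A}(-1)^{|A|-|B|}\var\,\EE(Y\vert\bX_B)=V_A,
\]
which is precisely (\ref{eq:va}); as a consistency check, summing over $A\subseteq\Pd$ recovers $\M^2_{\textup{tot}}=\sum_A V_A=\var\,Y$, in agreement with both decompositions. There is no genuine obstacle in this argument: it is a direct substitution, and the only point requiring a little care is the bookkeeping of the conditional independence of the paired draws from $\textup{P}_{Y\vert\bX_B}$ when the product (linear) kernel is evaluated; everything else follows from elementary properties of variance and conditional expectation.
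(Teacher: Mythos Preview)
Your proposal is correct and mirrors the paper's approach: the paper does not give a separate proof of this corollary, instead treating it as immediate from the computation done just before Mercer's theorem (the derivation of $\Sr_A^{\M}=\var\,\EE(Y\vert\bX_A)$ for the linear kernel), which is precisely the calculation you reproduce and reference. Your additional explicit verification that $\M^2_{\textup{tot}}=\var\,Y$ under $k_{\mathcal{Y}}(y,y')=yy'$ is a welcome bit of completeness but follows the same direct-substitution logic.
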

Thanks to Theorem \ref{th:MMDanova} we can now define properly normalized MMD-based indices.
\begin{definition}[MMD-based sensitivity indices]
\label{defi:normMMDindex}
In the frame of Theorem \ref{th:MMDanova}, let $A\subseteq\Pd$. The normalized MMD-based sensitivity index associated to a subset $A$ of input variables is defined as
\begin{equation*}
S_A^{\M} = \frac{\M^2_A}{\M^2_{\textup{tot}}},
\end{equation*}
while the total MMD-based index associated to $A$ is 
\begin{equation*}
S_A^{\mbox{\scriptsize{T}},\M} = \sum_{B\subseteq \Pd,\, B\cap A \neq \emptyset}S_B^{\M} = 1 - \frac{\EE_{{\bX}_{-A}}\left(\M^2(\textup{P}_{Y},\textup{P}_{Y \vert {\bX}_{-A}})\right)}{\M^2_{\textup{tot}}}.
\end{equation*}
From Theorem \ref{th:MMDanova}, we have the fundamental identity providing the interpretation of MMD-based indices as percentage of the explained generalized variance $\M^2_{\textup{tot}}$:
\begin{equation*}
\sum_{A\subseteq\Pd}S_A^{\M}=1.
\end{equation*}
\end{definition}
Finally, we exhibit a generalized law of total variance for $\M^2_{\textup{tot}}$ which will yield another formulation for the total MMD-based index.
\begin{proposition}[Generalized law of total variance]
\label{prop:lawtotalvar}
Assuming Assumption \ref{as:finitekernely} holds, we have
\begin{equation*}
\M^2_{\textup{tot}} = \EE_{{\bX}_A}\left[\EE_{\xi\sim\textup{P}_{Y \vert {\bX}_A}} k_{\mathcal{Y}}(\xi,\xi) - \EE_{\xi,\xi'\sim\textup{P}_{Y \vert {\bX}_A}} k_{\mathcal{Y}}(\xi,\xi')\right] + \EE_{{\bX}_A}\left(\M^2(\textup{P}_{Y},\textup{P}_{Y \vert {\bX}_A})\right).
\end{equation*}
\end{proposition}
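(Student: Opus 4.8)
The plan is to unwind both sides into expectations of $k_{\mathcal{Y}}$ and verify they agree, the only real ingredient being the tower property of conditional expectation. Note first that Assumption \ref{as:finitekernely} makes every expectation below finite: $\EE_{\xi\sim\textup{P}_{Y\vert\bX_A=\bx_A}}k_{\mathcal{Y}}(\xi,\xi)<\infty$ together with the Cauchy--Schwarz bound $\vert k_{\mathcal{Y}}(\xi,\xi')\vert\le\sqrt{k_{\mathcal{Y}}(\xi,\xi)k_{\mathcal{Y}}(\xi',\xi')}$ controls all the cross terms, while the case $A=\emptyset$ gives $\EE k_{\mathcal{Y}}(Y,Y)<\infty$; hence Fubini applies to every interchange of expectations performed below.

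First I would expand, for a fixed value $\bX_A=\bx_A$, the conditional MMD in its three-term form
\[
\M^2(\textup{P}_Y,\textup{P}_{Y\vert\bX_A=\bx_A}) = \EE_{\xi,\xi'\sim\textup{P}_Y}k_{\mathcal{Y}}(\xi,\xi') - 2\,\EE_{\xi\sim\textup{P}_Y,\,\zeta\sim\textup{P}_{Y\vert\bx_A}}k_{\mathcal{Y}}(\xi,\zeta) + \EE_{\zeta,\zeta'\sim\textup{P}_{Y\vert\bx_A}}k_{\mathcal{Y}}(\zeta,\zeta'),
\]
then average over $\bX_A$. Since $\xi,\xi'$ are drawn from the marginal $\textup{P}_Y$ independently of $\bX_A$, the first term is unchanged; for the middle term the tower property $\EE_{\bX_A}\EE_{\zeta\sim\textup{P}_{Y\vert\bX_A}}[\,\cdot\,]=\EE_Y[\,\cdot\,]$ gives $\EE_{\bX_A}\EE_{\xi\sim\textup{P}_Y,\,\zeta\sim\textup{P}_{Y\vert\bX_A}}k_{\mathcal{Y}}(\xi,\zeta)=\EE k_{\mathcal{Y}}(Y,Y')$ with $Y'$ an independent copy of $Y$. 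Hence
\[
\EE_{\bX_A}\!\left(\M^2(\textup{P}_Y,\textup{P}_{Y\vert\bX_A})\right) = \EE_{\bX_A}\EE_{\zeta,\zeta'\sim\textup{P}_{Y\vert\bX_A}}k_{\mathcal{Y}}(\zeta,\zeta') - \EE k_{\mathcal{Y}}(Y,Y'),
\]
which is exactly the simplified expression already recorded in the text.

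Next I would substitute this identity into the right-hand side of the Proposition. The term $-\,\EE_{\bX_A}\EE_{\xi,\xi'\sim\textup{P}_{Y\vert\bX_A}}k_{\mathcal{Y}}(\xi,\xi')$ coming from the bracket cancels (after relabelling $\zeta,\zeta'\to\xi,\xi'$) the term $+\,\EE_{\bX_A}\EE_{\zeta,\zeta'\sim\textup{P}_{Y\vert\bX_A}}k_{\mathcal{Y}}(\zeta,\zeta')$ just obtained, leaving $\EE_{\bX_A}\EE_{\xi\sim\textup{P}_{Y\vert\bX_A}}k_{\mathcal{Y}}(\xi,\xi) - \EE k_{\mathcal{Y}}(Y,Y')$. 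A final application of the tower property, $\EE_{\bX_A}\EE_{\xi\sim\textup{P}_{Y\vert\bX_A}}k_{\mathcal{Y}}(\xi,\xi)=\EE k_{\mathcal{Y}}(Y,Y)$, yields $\EE k_{\mathcal{Y}}(Y,Y)-\EE k_{\mathcal{Y}}(Y,Y')=\M^2_{\textup{tot}}$, which is the claim.

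There is no genuine obstacle here: the statement is the RKHS analogue of the classical law of total variance, with the bracket $\EE_{\xi\sim\textup{P}_{Y\vert\bX_A}}k_{\mathcal{Y}}(\xi,\xi)-\EE_{\xi,\xi'\sim\textup{P}_{Y\vert\bX_A}}k_{\mathcal{Y}}(\xi,\xi')$ playing the role of the generalized within-group conditional variance and $\EE_{\bX_A}(\M^2(\textup{P}_Y,\textup{P}_{Y\vert\bX_A}))$ the between-group part. The only point requiring care is integrability, handled by Assumption \ref{as:finitekernely} as noted above, which legitimizes every interchange of expectation and every cancellation used.
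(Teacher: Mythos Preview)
Your proof is correct and follows essentially the same approach as the paper: both arguments hinge on the simplified identity $\EE_{\bX_A}\!\left(\M^2(\textup{P}_Y,\textup{P}_{Y\vert\bX_A})\right)=\EE_{\bX_A}\EE_{\zeta,\zeta'\sim\textup{P}_{Y\vert\bX_A}}k_{\mathcal{Y}}(\zeta,\zeta')-\EE k_{\mathcal{Y}}(Y,Y')$ together with the tower property $\EE_{\bX_A}\EE_{\xi\sim\textup{P}_{Y\vert\bX_A}}k_{\mathcal{Y}}(\xi,\xi)=\EE k_{\mathcal{Y}}(Y,Y)$, the only difference being that the paper starts from $\M^2_{\textup{tot}}$ and adds--subtracts $\EE_{\bX_A}\EE_{\xi,\xi'\sim\textup{P}_{Y\vert\bX_A}}k_{\mathcal{Y}}(\xi,\xi')$ while you start from the right-hand side and cancel. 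Your explicit treatment of integrability via Cauchy--Schwarz is a welcome addition that the paper leaves implicit.
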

The proof is to be found in Appendix \ref{sec:lawtotalvar}. This is a generalization in the sense that the total variance is replaced by $\M^2_{\textup{tot}}$, the conditional variance by $\EE_{\xi\sim\textup{P}_{Y \vert {\bX}_A}} k_{\mathcal{Y}}(\xi,\xi) - \EE_{\xi,\xi'\sim\textup{P}_{Y \vert {\bX}_A}} k_{\mathcal{Y}}(\xi,\xi')$ and the variance of the conditional expectation by $\EE_{{\bX}_A}\left(\M^2(\textup{P}_{Y},\textup{P}_{Y \vert {\bX}_A})\right)$. In particular, all these terms reduce to the ones in the classical law of total variance if one uses the linear kernel $k_{\mathcal{Y}}(y,y')=yy'$ in the scalar case. This gives the following corollary.
\begin{corollary}[Other formulation of total MMD-based index]
In the frame of Theorem \ref{th:MMDanova}, we have for all $A\subseteq\Pd$
\begin{equation*}
S_A^{\mbox{\scriptsize{T}},\M} = \frac{\EE_{{\bX}_{-A}}\left[\EE_{\xi\sim\textup{P}_{Y \vert {\bX}_{-A}}} k_{\mathcal{Y}}(\xi,\xi) - \EE_{\xi,\xi'\sim\textup{P}_{Y \vert {\bX}_{-A}}} k_{\mathcal{Y}}(\xi,\xi')\right]}{\M^2_{\textup{tot}}}.
\end{equation*}
\end{corollary}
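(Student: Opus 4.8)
The plan is to read off the corollary by combining the second expression for the total MMD-based index supplied by Definition \ref{defi:normMMDindex} with the generalized law of total variance of Proposition \ref{prop:lawtotalvar}, the latter applied to the complementary index set $-A$ rather than to $A$ itself. In other words, no new analytic ingredient is needed: the identity is a one-line algebraic rearrangement once the right instance of Proposition \ref{prop:lawtotalvar} is written down.

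First I would invoke Proposition \ref{prop:lawtotalvar}, which holds for \emph{every} subset of $\{1,\ldots,d\}$; applying it with the subset $-A$ in place of $A$ yields
\begin{equation*}
\M^2_{\textup{tot}} = \EE_{{\bX}_{-A}}\left[\EE_{\xi\sim\textup{P}_{Y \vert {\bX}_{-A}}} k_{\mathcal{Y}}(\xi,\xi) - \EE_{\xi,\xi'\sim\textup{P}_{Y \vert {\bX}_{-A}}} k_{\mathcal{Y}}(\xi,\xi')\right] + \EE_{{\bX}_{-A}}\left(\M^2(\textup{P}_{Y},\textup{P}_{Y \vert {\bX}_{-A}})\right).
\end{equation*}
Dividing through by $\M^2_{\textup{tot}}$ (which is nonzero precisely when the normalized indices are meaningful; otherwise the statement is vacuous), this becomes
\begin{equation*}
1 = \frac{\EE_{{\bX}_{-A}}\left[\EE_{\xi\sim\textup{P}_{Y \vert {\bX}_{-A}}} k_{\mathcal{Y}}(\xi,\xi) - \EE_{\xi,\xi'\sim\textup{P}_{Y \vert {\bX}_{-A}}} k_{\mathcal{Y}}(\xi,\xi')\right]}{\M^2_{\textup{tot}}} + \frac{\EE_{{\bX}_{-A}}\left(\M^2(\textup{P}_{Y},\textup{P}_{Y \vert {\bX}_{-A}})\right)}{\M^2_{\textup{tot}}}.
\end{equation*}

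Then I would substitute this into the identity $S_A^{\mbox{\scriptsize{T}},\M} = 1 - \EE_{{\bX}_{-A}}\left(\M^2(\textup{P}_{Y},\textup{P}_{Y \vert {\bX}_{-A}})\right)/\M^2_{\textup{tot}}$ from Definition \ref{defi:normMMDindex}: the subtracted term cancels exactly against the second summand on the right-hand side above, leaving precisely the claimed expression for $S_A^{\mbox{\scriptsize{T}},\M}$. There is essentially no obstacle, since the corollary is a purely algebraic consequence of Proposition \ref{prop:lawtotalvar}; the only point one might wish to double-check is the provenance of the starting formula $S_A^{\mbox{\scriptsize{T}},\M} = 1 - \EE_{{\bX}_{-A}}(\M^2)/\M^2_{\textup{tot}}$. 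If one prefers not to take it directly from Definition \ref{defi:normMMDindex}, one can instead write $S_A^{\mbox{\scriptsize{T}},\M} = \sum_{B\subseteq\Pd,\,B\cap A\neq\emptyset}S_B^{\M} = 1 - \sum_{B\subseteq -A}S_B^{\M}$ using $\sum_{A\subseteq\Pd}S_A^{\M}=1$ (Theorem \ref{th:MMDanova}), and observe that the inclusion--exclusion structure of Theorem \ref{th:MMDanova} telescopes to $\sum_{B\subseteq -A}\M^2_B = \EE_{{\bX}_{-A}}\left(\M^2(\textup{P}_{Y},\textup{P}_{Y \vert {\bX}_{-A}})\right)$, exactly mirroring $\sum_{B\subseteq C}V_B=\var\,\EE(Y\vert{\bX}_C)$ in the classical ANOVA; combining this with the displayed splitting of $\M^2_{\textup{tot}}$ again gives the result.
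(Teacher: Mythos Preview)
Your proposal is correct and follows exactly the approach the paper intends: the corollary is stated immediately after Proposition \ref{prop:lawtotalvar} with the remark ``This gives the following corollary,'' and your argument---apply the generalized law of total variance with $-A$ in place of $A$, divide by $\M^2_{\textup{tot}}$, and substitute into the formula $S_A^{\mbox{\scriptsize{T}},\M}=1-\EE_{{\bX}_{-A}}(\M^2(\textup{P}_Y,\textup{P}_{Y\vert{\bX}_{-A}}))/\M^2_{\textup{tot}}$ from Definition \ref{defi:normMMDindex}---is precisely that implied derivation spelled out.
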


\subsubsection{HSIC-based sensitivity indices}
Another approach for combining kernel embeddings with sensitivity analysis consists in directly using HSIC as a sensitivity index. For example \cite{sdv15} considers the unnormalized index
\begin{equation*}
\Sr_A^{HS} = \HS(\bX_A,Y)
\end{equation*}
relying on a product RKHS $\HH_A=\F_A\times\G$ with kernel $k_{\HH_A}((\bx,y),(\bx',y'))=k_{\mathcal{X}_A}(\bx_A,\bx_A')k_\mathcal{Y}(y,y')$ and provided the following assumption holds:
\begin{assumption}
\label{as:finitekernelxy}
$\forall A\subseteq\Pd$, $\EE_{\xi\sim\textup{P}_{X_A}}k_{\mathcal{X}_A}(\xi,\xi)<\infty$ and $\EE_{\xi\sim\textup{P}_{Y}}k_{\mathcal{Y}}(\xi,\xi)<\infty$.
\end{assumption}
In \cite{sdv15} an empirical normalization inspired by the definition of the distance correlation criterion \citep{dcor07} was also proposed. But similarly to the MMD decomposition above, it is actually possible to exhibit an ANOVA-like decomposition for HSIC, thus providing a natural normalization constant. The main ingredient is an assumption on the kernel $k_\mathcal{X}$ associated to the input variables.

\begin{assumption}
\label{as:HSIC}
The reproducing kernel $k_\mathcal{X}$ of $\F$ is of the form
\begin{equation}
k_\mathcal{X}(\bx,\bx') = \prod_{l=1}^p \left(1 + k_l(x_l,x_l')\right) \label{eq:sumkernel}
\end{equation}
where for each $l=1,\ldots,d$, $k_{l}(\cdot,\cdot)$ is the reproducing kernel of a RKHS $\F_l$ of real functions depending only on variable $x_l$ and such that $1\notin\F_l$.\\
In addition, for all $l=1,\ldots,d$ and $\forall x_l\in\mathcal{X}_l$, we have
\begin{equation}
\int_{\mathcal{X}_l} k_l(x_l,x_l') d\textup{P}_{X_l}(x_l') = 0. \label{eq:nullint}
\end{equation}
\end{assumption}
The first part (\ref{eq:sumkernel}) of Assumption \ref{as:HSIC} may seem stringent, however it can be easily fulfilled by using univariate Gaussian kernels since they define a RKHS which does not contain constant functions \citep{stein06}.

On the contrary, the second assumption (\ref{eq:nullint}) is more subtle. It requires using kernels defining a so-called RKHS of \textit{zero-mean functions} \citep{wa95}. A prominent example of such RKHS is obtained if (a) all input variables are uniformly distributed on $[0,1]$ and (b) the univariate kernels are chosen among the Sobolev kernels with smoothness parameter $r\geq 1$:
\begin{equation}
k_l(x_l,x_l') = \frac{B_{2r}(\vert x_l-x_l'\vert)}{(-1)^{r+1}(2r)!} + \sum_{j=1}^r \frac{B_j(x_l)B_j(x_l')}{(j!)^2} \label{eq:sobkernel}
\end{equation}
where $B_j$ is the Bernoulli polynomial of degree $j$. Even though applying a preliminary transformation on the inputs in order to get uniform variables is conceivable (with \textit{e.g.} the probability integral transform), a more general and elegant procedure has been proposed by \cite{durrande12}. Starting from an arbitrary univariate $k(\cdot,\cdot)$, they build a zero-mean kernel $k_0^D(\cdot,\cdot)$ given by
\begin{equation*}
k_0^D(x,x') = k(x,x') - \frac{\int k(x,t)d\textup{P}(t)\int k(x',t)d\textup{P}(t)}{\iint k(s,t)d\textup{P}(s)d\textup{P}(t)} \label{eq:nullkernel}
\end{equation*}
where $k_0^D(\cdot,\cdot)$ satisfies $\forall x,\ \int k_0^D(x,t)d\textup{P}(t)=0$. Interestingly, they also show that the RKHS $\HH_0$ associated to $k_0(\cdot,\cdot)$ is orthogonal to the constant functions, thus satisfying directly the requirements for the product kernel (\ref{eq:sumkernel}).\\
More recently, several works made use of the Stein operator \citep{stein72} to define the Stein discrepancy in a RKHS \citep{chwia16} which showed great potential for Monte-Carlo integration \citep{oates17} or goodness-of-fit tests \citep{gorham15,chwia16,jit17} when the target distribution is either impossible to sample or is known up to a normalization constant. More precisely, given a RKHS $\HH$ with kernel $k(\cdot,\cdot)$ of functions in $\mathbb{R}^d$ and a (target) probability distribution with density $p(\cdot)$, they define a new RKHS $\HH_0$ with kernel $k_0^S(\cdot,\cdot)$ which writes
\begin{equation*}
k_0^S(\bx,\bx') = \nabla_\bx\nabla_{\bx'}k(\bx,\bx')+ \frac{\nabla_\bx p(\bx)}{p(\bx)} \nabla_{\bx'}k(\bx,\bx') + \frac{\nabla_{\bx'}p(\bx')}{p(\bx')} \nabla_{\bx}k(\bx,\bx')+ \frac{\nabla_\bx p(\bx)}{p(\bx)}\frac{\nabla_{\bx'}p(\bx')}{p(\bx')}  k(\bx,\bx')
\end{equation*}
and it can be proved that $\forall \bx\in\mathbb{R}^d,\ \int k_0^S(\bx,\bx')p(\bx')d\bx'=0$. Unlike $k_0^D(\cdot,\cdot)$, kernel $k_0^S(\cdot,\cdot)$ can still be defined when $p(\cdot)$ is known up to a constant: this property may find interesting applications in GSA when the input distributions are obtained via a preliminary Bayesian data calibration, since it would no longer be required to perform a costly sampling step of their posterior distribution and one could easily use the unnormalized posterior distribution instead.\\

With Assumption \ref{as:HSIC}, we can now state a decomposition for HSIC-based sensitivity indices.
\begin{theorem}[ANOVA decomposition for HSIC]
\label{th:HSICanova}
Under the same assumptions of Theorem \ref{th:anova} (in particular, the random vector $\bX$ has independent components) and with Assumptions \ref{as:finitekernelxy} and \ref{as:HSIC}, the HSIC dependence measure between $\bX=\left(X_1,\ldots,X_d\right)$ and $Y$ can be decomposed as
\begin{equation*}
\HS\left(\bX,Y\right) = \sum_{A\subseteq\Pd} \HS_A
\end{equation*}
where each term is given by
\begin{equation*}
\HS_A = \sum_{B \subset A}(-1)^{|A|-|B|} \HS\left({\bX}_B,Y\right)
\end{equation*}
and $\HS\left({\bX}_B,Y\right)$ is defined with a product RKHS $\HH_B=\F_B\times\G$ with kernel $k_B(\bx_B,\bx_B')k_{\mathcal{Y}}(y,y')=\prod_{l\in B}\left(1+k_l(x_l,x_l')\right)k_{\mathcal{Y}}(y,y')$ as in (\ref{eq:sumkernel}).
\end{theorem}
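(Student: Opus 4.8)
The plan is to reduce every building block $\HS(\bX_B,Y)$ to a sum of ``pure interaction'' terms by expanding the product kernel (\ref{eq:sumkernel}), and then to read off the ANOVA terms by a M\"{o}bius inversion. First I would expand, for any $B\subseteq\Pd$,
\[
\prod_{l\in B}\bigl(1 + k_l(x_l,x_l')\bigr)\;=\;\sum_{C\subseteq B}\;\prod_{l\in C}k_l(x_l,x_l')\;=:\;\sum_{C\subseteq B}\kappa_C(\bx_C,\bx_C'),
\]
with the convention $\kappa_\emptyset\equiv 1$. Substituting this into the HSIC expansion (\ref{eq:hsic}) with $k_{\mathcal{X}}$ replaced by $k_B$ and using linearity of the expectations --- legitimate because Assumption \ref{as:finitekernelxy}, via Cauchy--Schwarz and independence, makes each of the finitely many $\kappa_C$-expectations absolutely convergent --- yields $\HS(\bX_B,Y)=\sum_{C\subseteq B}H_C$, where $H_C$ denotes the three-term HSIC-type expression obtained by putting $\kappa_C$ in place of $k_{\mathcal{X}}$ in (\ref{eq:hsic}).

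The key step is to evaluate $H_C$ using the zero-mean property (\ref{eq:nullint}) together with the independence of the components of $\bX$. For $C\neq\emptyset$, mutual independence of the copies $X_l'$, $l\in C$, gives, pointwise in $\bX_C$,
\[
\EE_{\bX_C'}\kappa_C(\bX_C,\bX_C')\;=\;\prod_{l\in C}\EE_{X_l'}k_l(X_l,X_l')\;=\;0,
\]
so both the term $\EE_{\bX,\bX'}\kappa_C(\bX_C,\bX_C')\,\EE_{Y,Y'}k_{\mathcal{Y}}(Y,Y')$ and the cross term $-2\,\EE_{\bX,Y}\bigl[\EE_{\bX'}\kappa_C(\bX_C,\bX_C')\,\EE_{Y'}k_{\mathcal{Y}}(Y,Y')\bigr]$ vanish, leaving $H_C = \EE_{\bX,\bX',Y,Y'}\bigl[\kappa_C(\bX_C,\bX_C')\,k_{\mathcal{Y}}(Y,Y')\bigr] =: \Delta_C$. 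For $C=\emptyset$, $\kappa_\emptyset\equiv 1$ makes the three terms of (\ref{eq:hsic}) collapse to $\EE k_{\mathcal{Y}}(Y,Y') + \EE k_{\mathcal{Y}}(Y,Y') - 2\,\EE k_{\mathcal{Y}}(Y,Y') = 0 =: \Delta_\emptyset$. Hence $\HS(\bX_B,Y)=\sum_{C\subseteq B}\Delta_C$ for every $B\subseteq\Pd$.

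To conclude I would apply M\"{o}bius inversion on the subset lattice: from $\HS(\bX_B,Y)=\sum_{C\subseteq B}\Delta_C$ one gets $\Delta_A = \sum_{B\subseteq A}(-1)^{|A|-|B|}\HS(\bX_B,Y) = \HS_A$, so each term has the explicit closed form $\HS_A = \EE_{\bX,\bX',Y,Y'}\bigl[\prod_{l\in A}k_l(X_l,X_l')\,k_{\mathcal{Y}}(Y,Y')\bigr]$ (and $\HS_\emptyset=0$); in particular $\HS_A\geq 0$, since it is precisely the HSIC between $\bX_A$ and $Y$ computed with the valid product kernel $\prod_{l\in A}k_l$, whose marginal vanishes. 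Finally, specialising $\HS(\bX_B,Y)=\sum_{C\subseteq B}\Delta_C$ to $B=\{1,\dots,d\}$, for which $k_B=k_{\mathcal{X}}$ and the left-hand side is $\HS(\bX,Y)$, gives $\HS(\bX,Y)=\sum_{A\subseteq\Pd}\Delta_A=\sum_{A\subseteq\Pd}\HS_A$, which is the claim.

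I expect the only genuine difficulty to be bookkeeping rather than conceptual: one must carefully justify interchanging the sum over the $2^{|B|}$ subsets $C$ with the several nested expectations in (\ref{eq:hsic}) --- this is exactly where Assumption \ref{as:finitekernelxy} is used, through Fubini and Cauchy--Schwarz --- and keep track of which marginal expectation is taken over which independent copy so that (\ref{eq:nullint}) is applied to the correct argument. Once the expansion is in place, the core observation --- that (\ref{eq:nullint}) annihilates every term of $H_C$ except the single ``pure interaction'' term $\Delta_C$ --- is immediate.
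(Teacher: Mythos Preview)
Your proof is correct and takes a genuinely different route from the paper's. The paper first invokes Mercer's theorem to write $k_{\mathcal{Y}}(y,y')=\sum_r\phi_r(y)\phi_r(y')$, then represents $\HS(\bX,Y)$ as $\sum_r\|g^{[r]}\|_{\F}^2$ for suitable RKHS elements $g^{[r]}\in\F$, and finally applies an abstract orthogonal-decomposition theorem (Theorem~4.1 of Kuo et~al.) to each $g^{[r]}$, identifying the projections with $\HS(\bX_B,Y)$. Your argument bypasses all of this: you expand the product kernel $k_B=\sum_{C\subseteq B}\kappa_C$ directly in the HSIC formula, kill the two ``cross'' terms by the zero-mean condition~(\ref{eq:nullint}) and independence, and recover the ANOVA terms by M\"{o}bius inversion. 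This is more elementary --- it requires neither Mercer's theorem (hence no compactness of $\mathcal{Y}$) nor the external Kuo et~al.\ result --- and it delivers two bonuses the paper does not state: the closed form $\HS_A=\EE\bigl[\prod_{l\in A}k_l(X_l,X_l')\,k_{\mathcal{Y}}(Y,Y')\bigr]$, and the nonnegativity $\HS_A\geq 0$ via its identification as a genuine HSIC with kernel $\prod_{l\in A}k_l$. The paper's approach, in return, makes the RKHS-geometric picture explicit and situates the result within the general ANOVA-in-RKHS framework, which may be valuable if one wants to extend beyond product-form kernels.
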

The proof, which mainly relies on Mercer's theorem and on Theorem 4.1 from \cite{kuo10}, is given in Appendix \ref{sec:HSICanova}. Once again, this decomposition resembles the ANOVA decomposition (\ref{th:anova}), where the conditional variances are replaced with HSIC dependence measures between subsets of inputs and the output.\\
Properly normalized HSIC-based indices can then be defined:
\begin{definition}[HSIC-based sensitivity indices]
\label{defi:normHSICindex}
In the frame of Theorem \ref{th:HSICanova}, let $A\subseteq\Pd$. The normalized HSIC-based sensitivity index associated to a subset $A$ of input variables is defined as
\begin{equation*}
S_A^{\HS} = \frac{\HS_A}{\HS\left(\bX,Y\right)},
\end{equation*}
while the total HSIC-based index associated to $A$ is 
\begin{equation*}
S_A^{\mbox{\scriptsize{T}},\HS} = \sum_{B\subseteq \Pd,\, B\cap A \neq \emptyset}S_B^{\HS} = 1 - \frac{\HS(\bX_{-A},Y)}{\HS\left(\bX,Y\right)}.
\end{equation*}
From Theorem \ref{th:HSICanova}, we have the fundamental identity providing the interpretation of HSIC-based indices as percentage of the explained HSIC dependence measure between $\bX=\left(X_1,\ldots,X_d\right)$ and $Y$:
\begin{equation*}
\sum_{A\subseteq\Pd}S_A^{\HS}=1.
\end{equation*}
\end{definition}

Finally, a noteworthy asymptotic result yields a link between HSIC-based indices and MMD-based ones when the input kernel $k_\mathcal{X}$ degenerates to a dirac kernel, as elaborated in the following proposition.

\begin{proposition}
\label{prop:asympHSIC}
For all subset $A\subseteq\Pd$, let us define a product RKHS $\HH_A=\F_A\times\G$ with kernel $k_A(\bx_A,\bx_A')k_{\mathcal{Y}}(y,y')$. We further assume that $\forall \bx_A\in\mathcal{X}_A$, $p_{\bX_A}(\bx_A)>0$ and that 
\begin{equation}
k_A(\bx_A,\bx_A') = \frac{1}{\sqrt{p_{\bX_A}(\bx_A)}\sqrt{p_{\bX_A}(\bx_A')}}\prod_{l\in A} \frac{1}{h} K\left(\frac{x_l-x_l'}{h}\right) \label{eq:limkernel}
\end{equation}
where $K:\ \R \rightarrow\R$ is a symmetric kernel function satisfying $\int_u K(u)du=1$, and $h>0$. Then we have $\forall A\subseteq\Pd$
\begin{equation*}
\lim_{h\rightarrow 0} \HS(\bX_A,Y)=\EE_{{\bX}_A}\left(\M^2(\textup{P}_{Y},\textup{P}_{Y \vert {\bX}_A})\right)
\end{equation*}
where $\HS(\bX_A,Y)$ is defined with the product RKHS $\HH_A=\F_A\times\G$ and $\M^2(\textup{P}_{Y},\textup{P}_{Y \vert {\bX}_A})$ with the RKHS $\G$.
\end{proposition}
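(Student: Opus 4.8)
The plan is to start from the expanded form of HSIC in (\ref{eq:hsic}), specialized to $\bU=\bX_A$, $\bV=Y$, with input kernel $k_A$ and output kernel $k_{\mathcal{Y}}$, and to substitute the explicit expression (\ref{eq:limkernel}) of $k_A$. Writing each of the three HSIC contributions as an integral against the joint density $p_{\bX_A,Y}(\bx_A,y)=p_{\bX_A}(\bx_A)\,p_{Y\vert\bX_A}(y\vert\bx_A)$, the key algebraic observation is that every occurrence of the prefactor $1/\sqrt{p_{\bX_A}(\bx_A)}$ in $k_A$ is matched by a density $p_{\bX_A}(\bx_A)$ produced by an expectation over $\bX_A$ or $\bX_A'$, leaving behind a clean factor $\sqrt{p_{\bX_A}(\bx_A)}$. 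What then remains in each term is an integral of the rescaled product kernel $\mathcal{K}_h(\bx_A,\bx_A'):=\prod_{l\in A}\frac1h K\!\left(\frac{x_l-x_l'}{h}\right)$ against an $h$-independent function of $\bx_A'$.

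Next I would invoke the standard approximate-identity property: since $\int K(u)\,du=1$, $\mathcal{K}_h(\bx_A,\cdot)$ converges weakly to the Dirac mass at $\bx_A$ on $\mathcal{X}_A$, so that $\int \mathcal{K}_h(\bx_A,\bx_A')\,g(\bx_A')\,d\bx_A'\to g(\bx_A)$ as $h\to0$ for a suitably regular $g$. Applied to the first HSIC term with $g(\bx_A')=\sqrt{p_{\bX_A}(\bx_A')}\,\EE_{\zeta\sim\textup{P}_{Y\vert\bX_A=\bx_A},\,\zeta'\sim\textup{P}_{Y\vert\bX_A=\bx_A'}}k_{\mathcal{Y}}(\zeta,\zeta')$, the inner integral tends to $\sqrt{p_{\bX_A}(\bx_A)}\,\EE_{\zeta,\zeta'\sim\textup{P}_{Y\vert\bX_A}}k_{\mathcal{Y}}(\zeta,\zeta')$, and integrating the remaining $\sqrt{p_{\bX_A}(\bx_A)}$ against $d\bx_A$ gives $\EE_{\bX_A}\EE_{\zeta,\zeta'\sim\textup{P}_{Y\vert\bX_A}}k_{\mathcal{Y}}(\zeta,\zeta')$. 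In the second term the scalar factor $\EE_{\bX_A,\bX_A'}k_A(\bX_A,\bX_A')=\iint \mathcal{K}_h(\bx_A,\bx_A')\sqrt{p_{\bX_A}(\bx_A)}\sqrt{p_{\bX_A}(\bx_A')}\,d\bx_A\,d\bx_A'$ tends to $\int p_{\bX_A}=1$, so the term tends to $\EE_{\xi,\xi'\sim\textup{P}_Y}k_{\mathcal{Y}}(\xi,\xi')$. In the third term, for each fixed $\bx_A$ one has $\EE_{\bX_A'}k_A(\bx_A,\bX_A')=p_{\bX_A}(\bx_A)^{-1/2}\int\mathcal{K}_h(\bx_A,\bx_A')\sqrt{p_{\bX_A}(\bx_A')}\,d\bx_A'\to1$ (this is where the assumption $p_{\bX_A}(\bx_A)>0$ enters), while the factor $\EE_{Y'}k_{\mathcal{Y}}(y,Y')$ is independent of $h$; hence the term converges to $2\,\EE_{Y}\EE_{\xi\sim\textup{P}_Y}k_{\mathcal{Y}}(Y,\xi)=2\,\EE_{\xi,\xi'\sim\textup{P}_Y}k_{\mathcal{Y}}(\xi,\xi')$. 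Summing the three limits, the two $\textup{P}_Y$-contributions combine into $-\EE_{\xi,\xi'\sim\textup{P}_Y}k_{\mathcal{Y}}(\xi,\xi')$, giving $\lim_{h\to0}\HS(\bX_A,Y)=\EE_{\bX_A}\EE_{\zeta,\zeta'\sim\textup{P}_{Y\vert\bX_A}}k_{\mathcal{Y}}(\zeta,\zeta')-\EE_{\xi,\xi'\sim\textup{P}_Y}k_{\mathcal{Y}}(\xi,\xi')$, which is exactly $\EE_{\bX_A}\big(\M^2(\textup{P}_Y,\textup{P}_{Y\vert\bX_A})\big)$ by the identity recalled just before Assumption \ref{as:finitekernely}.

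The main obstacle is analytic rather than conceptual. One must justify the mollifier convergence $\int\mathcal{K}_h\,g\to g$ (which needs, e.g., $K$ bounded and integrable and $g$ continuous at the diagonal, or an $L^1$-type version), and, more delicately, the interchange of the limit $h\to0$ with the outer integrations over $\bX_A$ and $Y$; this last step relies on a dominated-convergence argument under mild regularity of $p_{\bX_A}$ and $p_{Y\vert\bX_A}$ together with boundedness of $k_{\mathcal{Y}}$, all of which are consistent with the standing hypotheses of the proposition. I would therefore state the convergence under these natural regularity conditions and carry out the computation as sketched above.
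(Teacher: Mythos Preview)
Your proposal is correct and follows essentially the same route as the paper: both arguments exploit the cancellation of the $1/\sqrt{p_{\bX_A}}$ normalization against the density factors arising from the expectations, and then use the approximate-identity (mollifier) behaviour of $\prod_{l\in A}\frac{1}{h}K\!\left(\frac{x_l-x_l'}{h}\right)$ as $h\to0$. The only cosmetic difference is that the paper works with the single factored integral form of HSIC (the product $[p_{\bX_A Y}-p_{\bX_A}p_Y][p_{\bX_A Y}'-p_{\bX_A}'p_Y']$) and performs an explicit change of variables $u_l=(x_l-x_l')/h$ before letting $h\to0$, whereas you expand into the three HSIC terms of (\ref{eq:hsic}) and treat each separately; the paper, incidentally, does not spell out the dominated-convergence justification you flag as the main analytic obstacle.
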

The proof is given in Appendix \ref{sec:asympHSIC}. As a particular case of Proposition \ref{prop:asympHSIC}, one can for example choose a (normalized) Gaussian kernel for $k_\mathcal{X}$ with a standard deviation tending to $0$, or the sinc kernel associated to the RKHS of band-limited continuous functions with a cutoff frequency tending to infinity. Obviously the result also holds if one uses different kernels $K$ for each input $X_l\in\bX_A$ in Eq. (\ref{eq:limkernel}).\\

Although they may seem trivial, Proposition \ref{prop:asympHSIC} and Corollary \ref{cor:MMDsobol} actually justify our claim that both the MMD- and the HSIC-based sensitivity indices are natural generalizations of Sobol' indices, in the sense that a degenerate HSIC-based index with a dirac kernel for the input variables gives the MMD-based index which, in turn, is equal to the Sobol' index when using the dot product kernel for the output.

\subsection{Kernel-embedding Shapley effects} \label{sec:shapley}

In this section, we now discuss how the previously indices can still be valuable in the case where the input variables are no longer independent. In this setting, the Shapley effects introduced in the context of GSA by Owen \citep{owe14} and based on Shapley values \citep{sha53} from game theory have appealing properties, since they provide a proper allocation of the output variance to each input variable, without requiring they are independent. We recall their definition below.

\begin{definition}[Shapley effects \citep{sha53}]
For any $l=1\ldots,d$, the Shapley effect of input $X_l$ is given by
\begin{equation}
Sh_l = \frac{1}{\var\, Y}\frac{1}{p} \sum_{A\subseteq\Pd,\, A\niton l} \binom{p-1}{\vert A\vert}^{-1}\ \bigg\{\var\, \EE\left(Y\vert\bX_{A\cup \{l\}}\right) - \var\,\EE\left(Y\vert\bX_{A}\right)\bigg\}. \label{eq:shapley}
\end{equation}
This definition corresponds to the Shapley value \citep{sha53} 
\begin{equation*}
\phi_l = \frac{1}{p} \sum_{A\subseteq\Pd,\, A\niton l} \binom{p-1}{\vert A\vert}^{-1}\ \bigg\{\textup{val}\left(A\cup \{l\}\right) - \textup{val}\left(A\right)\bigg\}
\end{equation*}
with value function $\textup{val}: \Pd\rightarrow\R_+$ equal to $\textup{val}(A)=\var\,\EE\left(Y\vert\bX_{A}\right)/\var\, Y$. Moreover, we have the following decomposition
\begin{equation*}
\sum_{l=1}^p Sh_l = 1.
\end{equation*}
\end{definition}
The only requirement is that the value function satisfies $\textup{val}: \Pd\rightarrow\R_+$ such that $\textup{val}(\emptyset)=0$. Combining this result with the kernel-based sensitivity indices is consequently straightforward, which leads to the definition of \textit{kernel-embedding Shapley effects}:
\begin{definition}[Kernel-embedding Shapley effects]
\label{def:shap}
For any $l=1\ldots,d$, we define
\begin{itemize}
\item[(a)] The MMD-Shapley effect
\begin{eqnarray}
Sh^{\M}_l &=& \frac{1}{\M^2_{\textup{tot}}}\frac{1}{p} \sum_{A\subseteq\Pd,\, A\niton l} \binom{p-1}{\vert A\vert}^{-1}\ \bigg\{\EE_{{\bX}_{A\cup\{l\}}}\left(\M^2(\textup{P}_{Y},\textup{P}_{Y \vert {\bX}_{A\cup\{l\}}})\right) \nonumber\\
&& - \EE_{{\bX}_A}\left(\M^2(\textup{P}_{Y},\textup{P}_{Y \vert {\bX}_A})\right)\bigg\} \label{eq:MMDshapley}
\end{eqnarray}
provided Assumption \ref{as:finitekernely} holds.
\item[(b)] The HSIC-Shapley effect
\begin{equation}
Sh^{\HS}_l = \frac{1}{\HS\left(\bX,Y\right)}\frac{1}{p} \sum_{A\subseteq\Pd,\, A\niton l} \binom{p-1}{\vert A\vert}^{-1}\ \bigg\{\HS\left(\bX_{A\cup \{l\}},Y\right) - \HS\left(\bX_{A},Y\right)\bigg\} \label{eq:HSICshapley}
\end{equation}
provided Assumptions \ref{as:finitekernelxy} and \ref{as:HSIC} hold.
\end{itemize}
We further have the decompositions
\begin{equation*}
\sum_{l=1}^p Sh^{\M}_l = \sum_{l=1}^p Sh^{\HS}_l = 1.
\end{equation*}
\end{definition}
Just like in the independent setting, kernel-embedding Shapley effects (\ref{eq:MMDshapley}) and (\ref{eq:HSICshapley}) can be seen as general kernelized versions of Shapley effects, since Proposition \ref{prop:asympHSIC} and Corollary \ref{cor:MMDsobol} are still valid when the inputs are dependent.

\begin{remark}
In the machine learning community dedicated to the interpretability of black-box models, an importance measure called Kernel-Shap has been recently introduced \citep{lund17}. Although its naming resembles ours, they designate clearly separated approaches, since the Kernel-Shap measure is a local Shapley effect, and the "Kernel" denomination only refers to an estimation procedure without any links to RKHS.
\end{remark}

\subsection{Enhancing traditional GSA with kernels} \label{sec:specific}

Beyond their theoretical interest in themselves, the kernel-ANOVA decompositions and the associated sensitivity indices also appear powerful from a practical point of view when one carefully examines the potential of using kernels. We give below some insights on how they could enhance traditional GSA studies in several settings. 

\paragraph{Categorical model outputs and target sensitivity analysis.}
In some applications, the model output $Y$ is categorical, meaning that $\mathcal{Y}=\{1,\ldots, K\}$ when the output can take $K$ levels. A simple common instance involves two levels, corresponding to a failure/success situation. Similarly even if $Y$ is not categorical, the objective may be to measure the impact of each input on the fact that the output reaches disjoint regions of interest $\mathcal{R}_1,\ldots,\mathcal{R}_K\subset\mathcal{Y}$, as for example in the case where one focuses on events $\{t_{i+1}>Y>t_i\}$ for thresholds $t_i,\ i=1,\ldots,K$. Such an objective is called \textit{target sensitivity analysis} (TSA, see \cite{mar20}) and can be reformulated in a categorical framework by the change of variable $Z=i$ if $Y\in\mathcal{R}_i$.

The case where $\mathcal{Y}=\{0,1\}$ (or equivalently $Z=\1_{\{Y\in\mathcal{R}\}}$) is frequent in TSA. A straightforward approach is to use Sobol' indices with a $0/1$ output, yielding a first-order Sobol index equal to
\begin{equation}
S_l^{\textup{TSA}} = \frac{\EE_{X_l}\left(\PP(Y=1\vert X_l) - \PP(Y=1)\right)^2}{\PP(Y=1)(1-\PP(Y=1))} \label{eq:sobolTSA}
\end{equation}
see \cite{li12}. But to the best of our knowledge, no systematic procedure is available when the number of levels is greater than two. Without resorting yet to our kernel-based indices, there are at least two roads, which actually lead to the same indices:
\begin{itemize}
\item[(a)] The \textit{one-versus-all} approach, where we compute several Sobol' indices by repeatedly considering $Z=\1_{\{Y=i\}}$ for all $i=1\ldots,K$. We thus have a collection of indices
\begin{equation*}
S_l^{\textup{TSA},[i]} = \frac{\EE_{X_l}\left(\PP(Y=i\vert X_l) - \PP(Y=i)\right)^2}{\PP(Y=i)(1-\PP(Y=i))},
\end{equation*}
and we can aggregate them by normalizing each of them by its own variance, yielding
\begin{equation*}
S_l^{\textup{TSA}} = \frac{\sum_{i=1}^K \PP(Y=i)(1-\PP(Y=i)) S_l^{\textup{TSA},[i]}}{\sum_{i=1}^K \PP(Y=i)(1-\PP(Y=i))} =\frac{\sum_{i=1}^K\EE_{X_l}\left(\PP(Y=i\vert X_l) - \PP(Y=i)\right)^2}{\sum_{i=1}^K \PP(Y=i)(1-\PP(Y=i))}.
\end{equation*}
\item[(b)] The \textit{one-hot encoding} approach, which consists in encoding the categorical output into a multivariate vector of $0/1$ variables and use the aggregated Sobol' indices defined in \cite{gamboa13} on these transformed variables. More precisely if $\mathcal{Y}=\{1,\ldots, K\}$, $Y$ is encoded as a $K$-dimensional vector $\left(Z_1=\1_{Y=1},\ldots,Z_K=\1_{Y=K}\right)$. The aggregated Sobol' indices are then
\begin{equation*}
S_l^{\textup{TSA}} = \frac{\sum_{i=1}^K\var\, \EE(Z_i\vert X_l)}{\sum_{i=1}^K \var\, Z_i} =\frac{\sum_{i=1}^K\EE_{X_l}\left(\PP(Y=i\vert X_l) - \PP(Y=i)\right)^2}{\sum_{i=1}^K \PP(Y=i)(1-\PP(Y=i))},
\end{equation*}
which is exactly the index obtained with the one-versus-all approach.
\end{itemize}
As for the kernel-based indices, the process is less cumbersome, since the only ingredient that requires attention is the choice of a kernel $k_{\mathcal{Y}}(\cdot,\cdot)$ adapted to categorical outputs, which has already been investigated in the kernel literature \citep{song07,song12}. We focus here on the simple \textit{dirac kernel} defined as $k_{\mathcal{Y}}(y,y')=\delta(y,y')$ for categorical values $y,y'\in\{1,\ldots, K\}$, and the corresponding kernel-based indices are then
\begin{itemize}
\item The first-order MMD-based index:
\begin{eqnarray*}
S_l^{\M} &=& \frac{\EE_{X_l}\sum_{i=1}^K\sum_{j=1}^K \delta(i,j) \PP(Y=i\vert X_l)\PP(Y=j\vert X_l) - \sum_{i=1}^K\sum_{j=1}^K \delta(i,j) \PP(Y=i)\PP(Y=j) }{\sum_{i=1}^K \PP(Y=i)- \sum_{i=1}^K\sum_{j=1}^K \delta(i,j) \PP(Y=i)\PP(Y=j) }\\
&=& \frac{\EE_{X_l}\sum_{i=1}^K \PP(Y=i\vert X_l)^2 - \sum_{i=1}^K\PP(Y=i)^2 }{\sum_{i=1}^K  \PP(Y=i) - \sum_{i=1}^K \PP(Y=i)^2 }\\
&=& \frac{\sum_{i=1}^K\EE_{X_l}\left(\PP(Y=i\vert X_l) - \PP(Y=i)\right)^2}{\sum_{i=1}^K \PP(Y=i)(1-\PP(Y=i))},
\end{eqnarray*}
where we retrieve again the one-versus-all Sobol' index.
\item The first-order HSIC-based index:
\begin{eqnarray*}
S_l^{\HS} &=& \int_{\mathcal{X}_l\times\mathcal{X}_l} \sum_{i=1}^K \sum_{j=1}^K k_{\{l\}}(x,x') \delta(i,j) \left[p_{X_l\vert Y=i}(x) - p_{X_l}(x) \right]\\
&& \left[p_{X_l\vert Y=j}(x') - p_{X_l}(x') \right] \PP(Y=i)\PP(Y=j) dxdx'\nonumber\\
&=& \sum_{i=1}^K \PP(Y=i)^2\int_{\mathcal{X}_l\times\mathcal{X}_l} k_{\{l\}}(x,x') \left[p_{X_l\vert Y=i}(x) - p_{X_l}(x) \right] \left[p_{X_l\vert Y=i}(x') - p_{X_l}(x') \right] dxdx'\nonumber\\
&=& \sum_{i=1}^K \PP(Y=i)^2 \M^2\left(\textup{P}_{X_l\vert Y=i},\textup{P}_{X_l}\right),
\end{eqnarray*}
thus extending the result of \cite{spa19} to any number of levels.
\item The MMD- and HSIC- Shapley effects using one of the above indices as building block.
\end{itemize}
Interestingly, it has been shown that Eq. (\ref{eq:sobolTSA}) can also been written, up to a constant, as the Pearson $\chi^2$ divergence between $\textup{P}_{X_l\vert Y=1}$ and $\textup{P}_{X_l}$ \citep{per19,spa20}. This means that $S_l^{\textup{TSA}}=S_l^{\M}$ and $S_l^{\HS}$ essentially have the same interpretation as weighted sums of distances between the initial input distributions and the conditional input distributions (when restricted to an output level), with the Pearson $\chi^2$ divergence and the MMD distance, respectively. But we will see in Section \ref{sec:estim} that the estimation of HSIC-based sensitivity indices is much less prone to the curse of dimensionality and does not require density estimation, as opposed to $S_l^{\textup{TSA}}$ \citep{per19}. Finally, note that another kernel for categorical variables has also been proposed \citep{song07,song12}, but this is actually a normalized dirac kernel which would only modify the weights in the indices above.

\paragraph{Beyond scalar model outputs.}
In many numerical simulation models, some of the outputs are curves representing the temporal evolution of physical quantities of the system such as temperatures, pressures, etc. One can also encounter industrial applications which involve spatial outputs \citep{mar08},. In such cases, the two main approaches in GSA are (a) the ubiquitous point of view, where one sensitivity index is computed for each time step or each spatial location \citep{ter17} and (b) the dimension reduction angle, in which one preliminary projects the output into a low-dimensional vector space and then calculates aggregated sensitivity indices for this new multivariate vector \citep{lam10,gamboa13}.

However, the kernel perspective for such structured outputs can bring new insights for GSA. Indeed the kernel literature has already proposed several ways to handle curves or images in regression or classification tasks. For instance the PCA-kernel \citep{ferraty06} can be used as an equivalent of (b), such as illustrated in \cite{sdv15}. But more interestingly, kernels dedicated to times series were designed, such as the global alignment kernel \citep{cut11} inspired by the dynamic time-warping kernel \citep{sak78}. Such kernel could be employed in industrial applications where one is interested by the impact of an input variable on the shape of the output curve. On the other hand, for dealing with spatial outputs similar to images such as in \cite{mar08}, one may consider a kernel based on image classification \citep{har07} which would be better suited to analyze the impact of inputs on the change of the shapes appearing inside the image output.

Finally, numerical models involving graphs as inputs or outputs (\textit{e.g.} electricity networks or molecules) may now be tractable with GSA by employing kernels specifically tailored for graphs \citep{gar03,ram03}.

\paragraph{Stochastic numerical models.}
On occasions one has to deal with \textit{stochastic simulators}, where internally the numerical model relies on random draws to compute the output. Typical industrial applications include models dedicated to the optimization of maintenance costs, where random failures are simulated during the system life cycle, or molecular modeling to predict macroscopic properties based on statistical mechanics, where several microstates of the system are generated at random \citep{mou15}. For fixed values of the input variables, the output is therefore a probability distribution, meaning that $\mathcal{Y}\subset\mathcal{M}_1^+$ the set of probability measures. In this setting GSA aims at measuring how changes in the inputs modify the output probability distribution, which is clearly out of the traditional scope of GSA. 

Once again the kernel point of view makes it possible to easily recycle the MMD- and the HSIC-based sensitivity indices in this context since they only require the definition of a kernel $k_{\mathcal{Y}}(\cdot,\cdot)$ on probability distributions. This can be achieved through one of the two following kernels:
\begin{equation}
k_{\mathcal{Y}}(\textup{P},\textup{Q}) = \sigma^2 e^{-\lambda \M^2(\textup{P},\textup{Q})} \label{eq:kerneldistr}
\end{equation}
introduced in \cite{song08} or 
\begin{equation*}
k_{\mathcal{Y}}(\textup{P},\textup{Q}) = \sigma^2 e^{-\lambda W_2^2(\textup{P},\textup{Q})}
\end{equation*}
discussed in \cite{bac17} where $\textup{P},\textup{Q}\in\mathcal{M}_1^+$, $W_2$ is the Wasserstein  distance and $\sigma^2,\lambda>0$ are parameters.

\section{Estimation}\label{sec:estim}

The properly normalized kernel-based sensitivity indices being defined above, we now discuss their estimation. The HSIC-based index is first examined as we only consider already proposed estimators. On the other hand, the MMD-based index is analyzed more thoroughly since several estimators can be envisioned given its close links with Sobol' indices. Finally we investigate the estimation of kernel-embedding Shapley effects.

\subsection{HSIC-based index estimation}\label{sec:hsicestim}
We start by observing that if Assumption \ref{as:HSIC} holds, for any subset $A\subseteq\Pd$ we have $\EE_{\bX_A}k_A(\bX_A,\bx_A')=1$ for all $\bx'_A\in\mathcal{X}_A$, which means that HSIC in Eq. (\ref{eq:hsic}) simplifies into:
\begin{eqnarray*}
\HS(\bX_A,Y) = \EE_{\bX_A,\bX_A',Y,Y'} k_A(\bX_A,\bX_A')k_{\mathcal{Y}}(Y,Y') - \EE_{Y,Y'}k_{\mathcal{Y}}(Y,Y').
\end{eqnarray*}
Given a sample $\left(\bx^{(i)},y^{(i)}\right)$, $i=1,\ldots,n$ and following \cite{song07,gretton08} two estimators $\HS_u(\bX_A,Y)$ and $\HS_b(\bX_A,Y)$ based on U- and V-statistics, respectively, can be introduced:
\begin{eqnarray*}
\HS_u(\bX_A,Y) &=& \frac{1}{n(n-1)}\sum_{i,j=1,\, i\neq j}^n \left(k_A(\bx_A^{(i)},\bx_A^{(j)})-1\right) k_{\mathcal{Y}}(y^{(i)},y^{(j)})\\
\HS_b(\bX_A,Y) &=& \frac{1}{n^2}\sum_{i,j=1}^n \left(k_A(\bx_A^{(i)},\bx_A^{(j)})-1\right) k_{\mathcal{Y}}(y^{(i)},y^{(j)})
\end{eqnarray*}
where we assume that $\textup{P}_{\bX_A}$ is known and is used to compute analytically the zero-mean kernels in Eq. (\ref{eq:nullkernel}). The study of the version of the above estimators when the sample $\left(\bx^{(i)}\right)_{i=1,\ldots,n}$ also serves to estimate $k_A$ is left as future work.
Both $\HS_u(\bX_A,Y)$ and $\HS_b(\bX_A,Y)$ converge in probability to $\HS(\bX_A,Y)$ with rate $1/\sqrt{n}$, and one can show \citep{song07} that if we assume that $k_A$ and $k_{\mathcal{Y}}$ are bounded almost everywhere by $1$ and are nonnegative, with probability at least $1-\delta$ we have
\begin{equation*}
\vert \HS_u(\bX_A,Y) - \HS(\bX_A,Y)\vert \leq 8 \sqrt{\log(2/\delta)/n}.
\end{equation*}
The asymptotic distributions of $\HS_u(\bX_A,Y)$ and $\HS_b(\bX_A,Y)$ have also been studied in the case where $\bX_A$ and $Y$ are dependent, see \cite{song07} and \cite{gretton08}.

It is worth mentioning that here the number of model evaluations is $n$, which is independent from the input dimension, meaning that all HSIC-based sensitivity indices can be computed with only a given sample $\left(\bx^{(i)},y^{(i)}\right)$, $i=1,\ldots,n$.

\subsection{MMD-based index estimation}\label{sec:mmdestim}

MMD-based indices are close generalizations of Sobol' indices, since they involve computing the expectation of a conditional quantity (a MMD distance with a conditional probability for the former and a conditional variance for the latter). This is the reason why estimation procedures developed for Sobol' indices can be adapted to the MMD ones. The first two estimators discussed below assume that one can easily sample the computer model for any input values (to be determined by the estimation procedure), as opposed to the next two ones which can be defined with any given sample $\left(\bx^{(i)},y^{(i)}\right)$, $i=1,\ldots,n$.

\subsubsection{Double-loop Monte-Carlo}

The first naive estimator consists in systematically resampling the conditional distribution $\textup{P}_{Y \vert {\bX}_A=\bx_A}$ for many values of $\bx_A$, as detailed in Algorithm \ref{alg:doubleloop} below.

\begin{algorithm}
\caption{Double-loop estimator of $\EE_{\bX_A}\left(\textup{MMD}^2(\textup{P}_Y,\textup{P}_{Y \vert \bX_A})\right)$}
\label{alg:doubleloop}
\begin{algorithmic}
\STATE Sample $\bx^{(j)}$ from $\textup{P}_{\bX}$ and compute $y^{(j)}=\eta(\bx^{(j)})$ for $j=1,\ldots,m$.
\FOR{$i=1\ldots,n$}
\STATE \textit{Outer-loop}
\STATE Sample $\bx_A^{(i)}$ from $\textup{P}_{\bX_A}$;
\FOR{$j=1\ldots,m$}
\STATE \textit{Inner-loop}
\STATE Sample $\bx_{-A}^{(j)}$ from $\textup{P}_{\bX_{-A}}$ (if inputs are independent) or from $\textup{P}_{\bX_{-A}\vert\bX_A=\bx_A^{(i)}}$ (otherwise);
\STATE Compute $\tilde{y}^{(j)}=\eta(\bx')$ where $\bx'_A=\bx_A^{(i)}$ and $\bx'_{-A}=\bx_{-A}^{(j)}$;
\ENDFOR
\STATE Compute 
\begin{equation*}
M^{(i)}=\frac{1}{n^2} \sum_{j,j'=1}^m k_{\mathcal{Y}}\left(y^{(j)},y^{(j')}\right)+\frac{1}{n^2} \sum_{j,j'=1}^m k_{\mathcal{Y}}\left(\tilde{y}^{(j)},\tilde{y}^{(j')}\right) - \frac{2}{n^2} \sum_{j,j'=1}^m k_{\mathcal{Y}}\left(y^{(j)},\tilde{y}^{(j')}\right)
\end{equation*}
the estimator of $\textup{MMD}^2(\textup{P}_Y,\textup{P}_{Y \vert \bX_A=\bx_A^{(i)}})$;
\ENDFOR
\STATE $\EE_{\bX_A}\left(\textup{MMD}^2(\textup{P}_Y,\textup{P}_{Y \vert \bX_A})\right)$ is finally estimated by $\frac{1}{n} \sum_{i=1}^n M^{(i)}$.
\end{algorithmic}
\end{algorithm}
For each MMD-based index of a subset of variables $\bX_A$ the total number of model evaluations is $(n+1)m$, which means for example that all first-order MMD-based sensitivity indices are computed at a cost of $p(n+1)m$ model evaluations. It is however possible to design better sampling strategies to compute first-order and total indices if the inputs are independent, as explained in the next section.

\subsubsection{Pick-freeze estimators}\label{sec:pickfreeze}

We begin by recalling the definition of the pick-freeze estimators for Sobol' indices.
\begin{lemma}[Pick-freeze formulation of Sobol indices \citep{janon14}]
\label{lemma:pfsobol}
Assume $\bX$ and $\bX'$ are two independent copies of the input vector, the inputs being independent. For any subset $A\subseteq\Pd$ define $\bX^{\sim A}$ the vector assembled from $\bX$ and $\bX'$ such that $\bX^{\sim A}_A=\bX_A$ and $\bX^{ \sim A}_{-A}=\bX'_A$. Now if we denote $Y=\eta(\bX)$ and $Y^{\sim A}=\eta(\bX^{\sim A})$, we have
\begin{eqnarray*}
\var\, \EE\left(Y\vert \bX_A\right)&=& \textup{Cov}\left(Y,Y^{\sim A}\right),\\
S^T_A &=& 1 - \frac{\textup{Cov}\left(Y,Y^{\sim -A}\right)}{\var\, Y}.
\end{eqnarray*}
\end{lemma}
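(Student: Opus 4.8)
The plan is to establish the first identity by conditioning on $\bX_A$ and exploiting the independence of the copies, and then to deduce the second identity by applying the first one to the complementary index set, combined with the expression for $S^T_A$ already recorded in the excerpt (namely $S^T_A = 1 - \var\,\EE(Y\vert\bX_{-A})/\var\, Y$).

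First I would note that, since the components of $\bX$ are mutually independent and $\bX'$ is an independent copy, conditionally on $\bX_A$ the vectors $\bX_{-A}$ and $\bX'_{-A}$ are independent, and $\bX'_{-A}$ has the same conditional law as $\bX_{-A}$ (its marginal law). Writing $Y=\eta(\bX_A,\bX_{-A})$ and $Y^{\sim A}=\eta(\bX_A,\bX'_{-A})$, this shows that conditionally on $\bX_A$ the variables $Y$ and $Y^{\sim A}$ are independent with the same conditional mean $\EE(Y\vert\bX_A)$, hence $\EE\!\left(Y\,Y^{\sim A}\,\middle\vert\,\bX_A\right)=\EE(Y\vert\bX_A)^2$. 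Taking expectations gives $\EE\!\left(Y\,Y^{\sim A}\right)=\EE\big[\EE(Y\vert\bX_A)^2\big]$, while $\EE(Y^{\sim A})=\EE(Y)$ because $Y^{\sim A}$ has the same distribution as $Y$. Subtracting, $\textup{Cov}\!\left(Y,Y^{\sim A}\right)=\EE\big[\EE(Y\vert\bX_A)^2\big]-\EE(Y)^2=\var\,\EE(Y\vert\bX_A)$, which is the first claim.

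For the total index, I would apply the identity just proved with $-A$ playing the role of $A$: by construction $\bX^{\sim -A}$ then keeps the coordinates in $-A$ equal to those of $\bX$ and resamples the coordinates in $A$ from $\bX'$, so the first part yields $\var\,\EE(Y\vert\bX_{-A})=\textup{Cov}\!\left(Y,Y^{\sim -A}\right)$. Substituting into $S^T_A = 1 - \var\,\EE(Y\vert\bX_{-A})/\var\, Y$ gives $S^T_A = 1 - \textup{Cov}\!\left(Y,Y^{\sim -A}\right)/\var\, Y$, the second claim.

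There is no serious obstacle here: all expectations exist by the square-integrability assumption inherited from Theorem \ref{th:anova}, and the only point requiring care is the bookkeeping of the $\bX^{\sim A}$ construction — which coordinates are taken from $\bX$ and which from $\bX'$ — together with the verification that conditioning on $\bX_A$ really does make $Y$ and $Y^{\sim A}$ conditionally independent with common conditional mean $\EE(Y\vert\bX_A)$, i.e.\ the ``pick-freeze'' mechanism itself.
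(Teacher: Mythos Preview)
Your argument is correct and is the standard proof of the pick-freeze identity. Note however that the paper does not actually supply its own proof of this lemma: it is stated with a citation to \cite{janon14} and used as a known ingredient. That said, the key step you use --- that $Y$ and $Y^{\sim A}$ are conditionally independent given $\bX_A$ with common conditional law $\textup{P}_{Y\vert\bX_A}$ --- is precisely the argument the paper invokes in its short proof of the companion result Lemma~\ref{lemma:pfmmd}, so your approach is fully aligned with the paper's style and methods.
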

In the particular case of $A=\{l\}$, the first-order and total indices $S_l$ and $S^T_l$ can be estimated by collecting estimators $\hat{V}_l$, $\hat{V}_{-l}$ and $\hat{V}$ of  $\textup{Cov}\left(Y,Y^{\sim l}\right)$, $\textup{Cov}\left(Y,Y^{\sim -l}\right)$ and $\var\, Y$, respectively. Such estimators have been first studied in \cite{homma96}, but we focus on the ones introduced by \cite{sal10} which write
\begin{eqnarray*}
\hat{V}_l &=& \frac{1}{n} \sum_{i=1}^n \eta(\bx^{(i)}) \left\{\eta(\bx^{\sim l,(i)}) - \eta(\bx'^{(i)})\right\},\\
\hat{V}_{-l} &=& \frac{1}{n} \sum_{i=1}^n \eta(\bx'^{(i)}) \left\{\eta(\bx^{\sim l,(i)}) - \eta(\bx^{(i)})\right\},\\
\hat{V} &=& \frac{1}{n} \sum_{i=1}^n \eta(\bx^{(i)})^2 -  \left(\frac{1}{n} \sum_{i=1}^n \eta(\bx^{(i)})\right)^2
\end{eqnarray*}
where $\bx^{(i)}$ and $\bx'^{(i)}$ denote independent samples of $\bX$ and $\bx^{\sim l,(i)}$ is a vector such that $\bx^{\sim l,(i)}_l=\bx^{(i)}_l$ and $\bx^{ \sim l,(i)}_{-l}=\bx'^{(i)}_{-l}$. The total number of model evaluations to estimate both $S_l$ and $S^T_l$ is thus $(p+2)n$, which is much less than the amount required by the previously introduced double-loop estimator.

We now build upon these estimators to design equivalent ones for the first-order and total MMD-based sensitivity indices. The main ingredient is to state an equivalent of Lemma \ref{lemma:pfsobol} for the MMD.
\begin{lemma}[Pick-freeze formulation of MMD-based indices]
\label{lemma:pfmmd}
With the same notations and assumptions as in Lemma \ref{lemma:pfsobol}, we have
\begin{equation*}
\EE_{\bX_A}\left(\textup{MMD}^2(\textup{P}_Y,\textup{P}_{Y \vert \bX_A})\right) = \EE k_{\mathcal{Y}}\left(Y,Y^{\sim A}\right)-\EE k_{\mathcal{Y}}\left(Y,Y'\right).
\end{equation*}
\end{lemma}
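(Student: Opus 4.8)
The plan is to reduce the statement to the expansion of the MMD-based index already obtained in Section~\ref{sec:kernel}. Recall that under Assumption~\ref{as:finitekernely} one has
\[
\EE_{\bX_A}\left(\textup{MMD}^2(\textup{P}_Y,\textup{P}_{Y \vert \bX_A})\right) = \EE_{\bX_A}\, \EE_{\zeta,\zeta'\sim \textup{P}_{Y\vert \bX_A}}k_{\mathcal{Y}}(\zeta,\zeta') - \EE_{\xi,\xi'\sim \textup{P}_Y}k_{\mathcal{Y}}(\xi,\xi').
\]
The subtracted term is exactly $\EE k_{\mathcal{Y}}(Y,Y')$, since $\xi$ and $\xi'$ are independent with common law $\textup{P}_Y$, which is precisely the joint law of $(Y,Y')$ with $Y'$ an independent copy of $Y$. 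Hence it only remains to show that the first term equals $\EE k_{\mathcal{Y}}(Y,Y^{\sim A})$.

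For this, I would condition on $\bX_A$. Because the components of $\bX$ are mutually independent, fixing $\bX_A=\bx_A$ leaves $Y=\eta(\bx_A,\bX_{-A})$ a function of $\bX_{-A}\sim \textup{P}_{\bX_{-A}}$ only, and $Y^{\sim A}=\eta(\bx_A,\bX'_{-A})$ a function of $\bX'_{-A}$ only, where $\bX'_{-A}$ is an independent copy of $\bX_{-A}$ with the same law $\textup{P}_{\bX_{-A}}$. Consequently, conditionally on $\bX_A=\bx_A$, the pair $(Y,Y^{\sim A})$ consists of two independent draws from $\textup{P}_{Y\vert \bX_A=\bx_A}$, so that
\[
\EE\!\left[k_{\mathcal{Y}}(Y,Y^{\sim A}) \mid \bX_A=\bx_A\right] = \EE_{\zeta,\zeta'\sim \textup{P}_{Y\vert \bX_A=\bx_A}}k_{\mathcal{Y}}(\zeta,\zeta').
\]
Taking the expectation over $\bX_A$ and invoking the tower property then gives $\EE k_{\mathcal{Y}}(Y,Y^{\sim A}) = \EE_{\bX_A}\,\EE_{\zeta,\zeta'\sim \textup{P}_{Y\vert \bX_A}}k_{\mathcal{Y}}(\zeta,\zeta')$, which combined with the previous paragraph yields the claim.

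The only technical care needed is integrability of the kernel evaluations: by positive definiteness of $k_{\mathcal{Y}}$ one has $|k_{\mathcal{Y}}(y,y')|\le \sqrt{k_{\mathcal{Y}}(y,y)}\,\sqrt{k_{\mathcal{Y}}(y',y')}$, so Assumption~\ref{as:finitekernely} (applied to $A$ and to $A=\emptyset$) guarantees that all expectations above are finite and that Fubini's theorem may be used to interchange the conditional expectations. I do not expect a genuine obstacle here; the argument is essentially bookkeeping with conditional distributions, the one point requiring attention being the verification that, under input independence, "freezing" $\bX_A$ and redrawing $\bX_{-A}$ independently produces exactly two conditionally i.i.d.\ copies of $Y$ given $\bX_A$ --- which is precisely the probabilistic content of the pick-freeze construction already used for Sobol' indices in Lemma~\ref{lemma:pfsobol}.
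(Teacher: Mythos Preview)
Your proof is correct and follows essentially the same route as the paper's own one-line argument: conditionally on $\bX_A$, the pair $(Y,Y^{\sim A})$ is i.i.d.\ with law $\textup{P}_{Y\vert \bX_A}$, so by the tower property $\EE k_{\mathcal{Y}}(Y,Y^{\sim A})=\EE_{\bX_A}\EE_{\zeta,\zeta'\sim \textup{P}_{Y\vert \bX_A}}k_{\mathcal{Y}}(\zeta,\zeta')$. Your version is simply more detailed, spelling out the role of input independence in the pick-freeze construction and the integrability justification via Assumption~\ref{as:finitekernely}.
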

\begin{proof}
Since $Y$ and $Y^{\sim A}$ are conditionally independent on $\bX_A$ with the same distribution, we can write $\EE k_{\mathcal{Y}}\left(Y,Y^{\sim A}\right)=\EE_{\bX_A}\EE\left[k_{\mathcal{Y}}\left(Y,Y^{\sim A}\right) \vert \bX_A\right]=\EE_{\bX_A} \EE_{\zeta,\zeta'\sim \textup{P}_{Y\vert \bX_A}}k_{\mathcal{Y}}(\zeta,\zeta') $.
\end{proof}
Estimators $\widehat{\M}^2_l$ for $\EE_{X_l}\left(\textup{MMD}^2(\textup{P}_Y,\textup{P}_{Y \vert X_l})\right)$ and $\widehat{\M}^2_{-l}$ for $\EE_{X_{-l}}\left(\textup{MMD}^2(\textup{P}_Y,\textup{P}_{Y \vert X_{-l}})\right)$ are therefore given by
\begin{eqnarray*}
\widehat{\M}^2_l &=& \frac{1}{n} \sum_{i=1}^n \left\{k_{\mathcal{Y}}\left(\eta(\bx^{(i)}),\eta(\bx^{\sim l,(i)})\right)-k_{\mathcal{Y}}\left(\eta(\bx^{(i)}),\eta(\bx'^{(i)})\right)\right\} \\
\widehat{\M}^2_{-l} &=& \frac{1}{n} \sum_{i=1}^n \left\{k_{\mathcal{Y}}\left(\eta(\bx'^{(i)}),\eta(\bx^{\sim l,(i)})\right)-k_{\mathcal{Y}}\left(\eta(\bx^{(i)}),\eta(\bx'^{(i)})\right)\right\} 
\end{eqnarray*}
Similarly the normalization constant $\M^2_{\textup{tot}}=\EE k_{\mathcal{Y}}(Y,Y)-\EE k_{\mathcal{Y}}(Y,Y')$ is estimated by
\begin{equation*}
\widehat{\M}^2_{\textup{tot}} = \frac{1}{n} \sum_{i=1}^n k_{\mathcal{Y}}\left(\eta(\bx^{(i)}),\eta(\bx^{(i)})\right) - \frac{1}{n^2} \sum_{i,j=1}^n k_{\mathcal{Y}}\left(\eta(\bx^{(i)}),\eta(\bx^{(j)})\right).
\end{equation*}
All these estimators can actually be recovered by using Mercer's theorem $k_{\mathcal{Y}}(y,y')=\sum_{r=1}^\infty \phi_r(y)\phi_r(y')$ and plugging the Sobol' estimators of $\textup{Cov}\left(\phi_r(Y),\phi_r(Y^{\sim l})\right)$, $\textup{Cov}\left(\phi_r(Y),\phi_r(Y^{\sim -l})\right)$ and $\var\, \phi_r(Y)$ for all $r>1$. Once again all first-order and total MMD-based sensitivity indices can be estimated with a total cost of $(p+2)n$ model evaluations, and by the strong law of large numbers it is straightforward to show that both $\widehat{\M}^2_l/\widehat{\M}^2_{\textup{tot}} $ and $\widehat{\M}^2_{-l}/\widehat{\M}^2_{\textup{tot}} $ are consistent.

\subsubsection{First-order index estimation with ranks}\label{sec:mmdrank}

The two previous estimators, although simple, necessitate specific sampling schemes (double-loop Monte-Carlo or pick-freeze) which may not be amenable in practice. In addition first-order MMD indices estimation call for a number of model evaluations which increases with the number of input variables $d$. Recently, \cite{gamboa20} introduced new estimators of first-order Sobol' indices based on ranking and inspired by the work of \cite{chat20}. In particular, for any pair of random variables $(V,Y)$ and measurable bounded functions $f$ and $g$, they propose a universal estimation procedure for expectations of the form 
\begin{equation*}
\EE\left(\EE[f(Y)\vert V] \EE[g(Y)\vert V]\right)
\end{equation*}
using only a given sample $(v^{(i)},y^{(i)})_{i=1,\ldots,n}$ and an estimator given by
\begin{equation*}
\frac{1}{n} \sum_{i=1}^n f(y^{(i)})g(y^{(\sigma_n(i))})
\end{equation*} 
where $\sigma_n$ is a random permutation with no fixed point and measurable with respect to the $\sigma$-algebra generated by $(v^{(1)},\ldots,v^{(n)})$. First-order Sobol' indices are then estimated using $f(x)=g(x)=x$ and the permutation $\sigma_n=N$ defined as in \cite{chat20}:
\begin{equation}
\label{eq:perm}
N(i) = \left\{
    \begin{array}{ll}
        \pi^{-1}(\pi(i)+1)  & \mbox{if } \pi(i)+1\leq n \\
        \pi^{-1}(1) & \mbox{otherwise}
    \end{array}
\right.
\end{equation}
where $\pi(i)$ is the rank of $V^{(i)}$ in the sample $(V^{(1)},\ldots,V^{(n)})$. All first-order indices are finally obtained with a given sample by considering one after the other the pairs $(X_l,Y)$ with their own permutation based on the sample ranks of $X_l$.

Interestingly, it is possible to generalize this result to the first-order MMD indices with the following proposition.
\begin{proposition}[Generalization of Proposition 3.2 from \cite{gamboa20}]
\label{prop:rank}
Let $k(\cdot,\cdot)$ be a measurable bounded kernel and $(v^{(i)},y^{(i)})_{i=1,\ldots,n}$ an iid sample from a pair of random variables $(V,Y)$. Consider a random permutation with no fixed point and measurable with respect to the $\sigma$-algebra generated by $(v^{(1)},\ldots,v^{(n)})$ such that for any $i=1,\ldots,n$, $v^{(\sigma_n(i))} \rightarrow v^{(i)}$ as $n\rightarrow\infty$ with probability one. Then the estimator
\begin{equation*}
\chi_n(V,Y,k) = \frac{1}{n}\sum_{i=1}^n k(y^{(i)},y^{(\sigma_n(i))})
\end{equation*}
converges almost surely to 
\begin{equation*}
\chi(V,Y,k) = \EE_{V} \EE_{\xi,\xi'\sim \textup{P}_{Y\vert V}}k_{\mathcal{Y}}(\xi,\xi') 
\end{equation*}
as $n\rightarrow\infty$.
\end{proposition}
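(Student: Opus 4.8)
The plan is to reduce the claim to the product‑function case treated in \cite{gamboa20} by expanding the kernel, and then to control the resulting remainder series uniformly in $n$. Writing $\textup{P}_{Y\vert V}$ for the conditional law of $Y$ given $V$, the target quantity is $\chi(V,Y,k)=\EE_V\!\left[\iint k(\xi,\xi')\,d\textup{P}_{Y\vert V}(\xi)\,d\textup{P}_{Y\vert V}(\xi')\right]$. By Mercer's theorem (on a compact $\mathcal{Y}$), or more generally by expanding $k(y,\cdot)$ in an orthonormal basis $(e_r)$ of the separable RKHS $\HH$ of $k$, one gets a representation
\[
k(y,y')=\sum_{r\ge1}\phi_r(y)\phi_r(y'),\qquad \sum_{r\ge1}\phi_r(y)^2=k(y,y)\le M:=\sup\nolimits_{y,y'}\vert k(y,y')\vert<\infty,
\]
with each $\phi_r$ bounded and measurable (indeed $\vert\phi_r(y)\vert\le\Vert\phi_r\Vert_{\HH}\,\Vert k(y,\cdot)\Vert_{\HH}\le\sqrt{M}$). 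Since the sum over $i$ in $\chi_n$ is finite, it may be exchanged with the (absolutely convergent) sum over $r$, giving $\chi_n(V,Y,k)=\sum_{r\ge1}\chi_n^{(r)}$ with $\chi_n^{(r)}:=\tfrac1n\sum_{i=1}^n\phi_r(y^{(i)})\phi_r(y^{(\sigma_n(i))})$.

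First I would apply Proposition~3.2 of \cite{gamboa20} termwise, with $f=g=\phi_r$: its hypotheses — $(v^{(i)},y^{(i)})$ iid, $\sigma_n$ fixed‑point‑free and $\sigma(v^{(1)},\dots,v^{(n)})$‑measurable, and $v^{(\sigma_n(i))}\to v^{(i)}$ a.s. — are exactly those assumed here, so each $\chi_n^{(r)}$ converges a.s. to $c_r:=\EE_V\!\left[(\EE[\phi_r(Y)\vert V])^2\right]$. Because $\sum_r\vert\phi_r(\xi)\phi_r(\xi')\vert\le M$, dominated convergence permits exchanging $\sum_r$ with the double conditional integral, yielding $\sum_{r\ge1}c_r=\chi(V,Y,k)$; moreover Jensen gives $c_r\le\EE[\phi_r(Y)^2]$, so the tails $\rho_R:=\sum_{r>R}\EE[\phi_r(Y)^2]=\EE\!\left[k(Y,Y)-\sum_{r\le R}\phi_r(Y)^2\right]$ decrease to $0$ as $R\to\infty$ (monotone convergence, using $\EE\,k(Y,Y)\le M<\infty$).

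The crux, which I expect to be the main obstacle, is interchanging $\lim_n$ with $\sum_{r\ge1}$, and this requires a bound on $\sum_{r>R}\chi_n^{(r)}$ uniform in $n$. The key is that $\sigma_n$ is a \emph{permutation}: with $a_i:=\sum_{r>R}\phi_r(y^{(i)})^2=k(y^{(i)},y^{(i)})-\sum_{r\le R}\phi_r(y^{(i)})^2\in[0,M]$, Cauchy--Schwarz in $\ell^2$ over the indices $r>R$, followed by $\sqrt{a_i a_{\sigma_n(i)}}\le\tfrac12(a_i+a_{\sigma_n(i)})$ and $\sum_i a_{\sigma_n(i)}=\sum_i a_i$, give
\[
\Bigl\vert\,\sum_{r>R}\chi_n^{(r)}\,\Bigr\vert=\Bigl\vert\,\tfrac1n\sum_{i=1}^n\sum_{r>R}\phi_r(y^{(i)})\phi_r(y^{(\sigma_n(i))})\,\Bigr\vert\le\tfrac1n\sum_{i=1}^n a_i .
\]
The right‑hand side is the empirical mean of the bounded measurable map $y\mapsto k(y,y)-\sum_{r\le R}\phi_r(y)^2\in[0,M]$, hence tends a.s. to $\rho_R$ by the strong law of large numbers. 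Thus, for each fixed $R$, on an almost sure event, $\limsup_n\vert\chi_n(V,Y,k)-\chi(V,Y,k)\vert\le 2\rho_R$ (splitting into the finite sum $\sum_{r\le R}(\chi_n^{(r)}-c_r)\to0$, the estimator tail bounded by $\rho_R$, and the bound $\sum_{r>R}c_r\le\rho_R$). Intersecting the countably many almost sure events (one per term $r\le R$, one per value of $R$) and letting $R\to\infty$ yields $\chi_n(V,Y,k)\to\chi(V,Y,k)$ almost surely.

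Everything apart from the uniform tail bound is routine: the termwise limits come straight from \cite{gamboa20}, and the exchanges of summation with integration, together with the two SLLN applications, involve only bounded functions on a probability space. A minor point to verify is that the expansion $k=\sum_r\phi_r\otimes\phi_r$ with bounded $\phi_r$ is available without assuming $\mathcal{Y}$ compact; as indicated, this follows from separability of the RKHS of $k$ and the reproducing‑property bound $\vert e_r(y)\vert\le\sqrt{k(y,y)}$, so that the argument goes through for any bounded measurable kernel.
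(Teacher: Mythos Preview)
Your argument is correct, and in fact more carefully written than the paper's, but it takes a genuinely different route to the almost-sure conclusion. The paper also starts from the Mercer expansion $k=\sum_r\phi_r\otimes\phi_r$, but then only shows that the \emph{expectation} $\EE[\chi_n]$ converges to the target (using the intermediate step Eq.~(34) in the proof of \cite{gamboa20}'s Proposition~3.2 termwise and exchanging the sum with the limit via absolute convergence); it then closes the gap between $\chi_n$ and $\EE[\chi_n]$ in one shot by observing that McDiarmid's bounded-differences inequality from \cite{gamboa20} applies verbatim to the statistic $\chi_n$ (since $k$ is bounded), which via Borel--Cantelli gives $\chi_n-\EE[\chi_n]\to0$ a.s. By contrast, you apply the full almost-sure statement of Proposition~3.2 termwise, and then control the interchange of $\lim_n$ and $\sum_r$ directly through your Cauchy--Schwarz/permutation tail bound $\bigl|\sum_{r>R}\chi_n^{(r)}\bigr|\le\frac1n\sum_i a_i$ together with the SLLN. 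Your route is more self-contained and elementary---no concentration inequality needed, only the SLLN and the permutation identity $\sum_i a_{\sigma_n(i)}=\sum_i a_i$---whereas the paper's route is shorter once one is willing to invoke McDiarmid as a black box. A small side remark: your extension beyond Mercer via an orthonormal basis of the RKHS requires separability of $\HH$, which is not automatic for an arbitrary bounded measurable kernel; the paper simply assumes Mercer's theorem holds throughout, so on this point you are attempting slightly more generality than the paper.
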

The proof relies again on Mercer's theorem and is given in Appendix \ref{sec:rank}. The estimators of $\EE_{X_l}\left(\textup{MMD}^2(\textup{P}_Y,\textup{P}_{Y \vert X_l})\right)$ and $\M^2_{\textup{tot}}$ are finally given by 
\begin{eqnarray}
\widehat{\M}^2_l &=& \frac{1}{n} \sum_{i=1}^n k_{\mathcal{Y}}\left(y^{(i)},y^{(\sigma^l_n(i))}\right)- \frac{1}{n^2} \sum_{i,j=1}^n k_{\mathcal{Y}}\left(y^{(i)},y^{(j)}\right)\label{eq:mmdrank}\\
\widehat{\M}^2_{\textup{tot}} &=& \frac{1}{n} \sum_{i=1}^n k_{\mathcal{Y}}\left(y^{(i)},y^{(i)}\right) - \frac{1}{n^2} \sum_{i,j=1}^n k_{\mathcal{Y}}\left(y^{(i)},y^{(j)}\right)\label{eq:mmdtotestim}
\end{eqnarray}
for a sample $\left(\bx^{(i)},y^{(i)}\right)$, $i=1,\ldots,n$ and where $\sigma^l_n$ is the permutation defined in Eq. (\ref{eq:perm}) with a ranking performed on the sample $\left(x_l^{(i)}\right)_{i=1,\ldots,n}$.

\subsubsection{Higher-order index estimation with nearest-neighbors}

The ranking approach introduced above can actually be generalized to estimate higher-order sensitivity indices by replacing ranking (in dimension 1) by nearest-neighbors (in arbitrary dimension), since they define a permutation with the same properties as required in Proposition \ref{prop:rank}. This was proposed independently by \cite{aza19} in the context of a dependence measure and by \cite{bro20} for Shapley effects estimation. Here we adopt the formalism of \cite{bro20}, where they introduce $j^*_A(i,m)$ the index such that the sample point $\bx_A^{(j^*_A(i,m))}$ of the subset $A\subseteq\Pd$ of input variables is the $m$-th nearest neighbor of the sample point $\bx_A^{(i)}$ in a sample of the inputs $\left(\bx^{(i)}\right)_{i=1,\ldots,n}$. Then their nearest-neighbor estimator $\hat{V}^{\textup{knn}}_{A}$ of $\var\, \EE(Y\vert\bX_A)$ is given by 
\begin{equation*}
\hat{V}^{\textup{knn}}_{A} = \frac{1}{n_A} \sum_{j=1}^{n_A} \eta\left(\bx^{(j^*_A(s(j),1))}\right)\eta\left(\bx^{(j^*_A(s(j),2))}\right) - \left(\frac{1}{n} \sum_{i=1}^n \eta\left(\bx^{(i)}\right)\right)^2
\end{equation*}
where $s(j)$, $j=1,\ldots,n_A$ is a sample of uniformly distributed integers in $\{1,\ldots,n\}$, with $n_A\leq n$. The choice of using a subsample $s(j)$ is motivated by the authors so that their framework is general enough for the different aggregation procedures they propose for Shapley effects and for their consistency proofs. Several numerical experimentations not reported here also show that using all the samples instead of subsamples yield biased estimators, so we follow the procedure of \cite{bro20}. Once again this estimator can be generalized to MMD-based indices, where $\EE_{\bX_A}\left(\textup{MMD}^2(\textup{P}_Y,\textup{P}_{Y \vert \bX_A})\right)$ is estimated by
\begin{equation*}
\widehat{\M}^2_A = \frac{1}{n_A} \sum_{j=1}^{n_A} k_{\mathcal{Y}}\left(y^{(j^*_A(s(j),1))},y^{(j^*_A(s(j),2))}\right) - \frac{1}{n^2} \sum_{i,j=1}^n k_{\mathcal{Y}}\left(y^{(i)},y^{(j)}\right)
\end{equation*}
where we denote $y^{(i)}=\eta\left(\bx^{(i)}\right)$. The consistency of this estimator directly follows from the consistency of $\hat{V}^{\textup{knn}}_{A}$ from \cite{bro20} and Mercer's theorem. Since $j^*_A(s(j),1)=s(j)$, the estimator is identical to the ranking-based one in (\ref{eq:mmdrank}) where the permutation from rankings is simply replaced by the index of the nearest neightbor not including itself $j^*_A(s(j),2)$.

\subsection{Shapley effect estimation} \label{sec:shapleyestim}

The last estimation task concerns kernel-embedding Shapley effects set forth in Definition \ref{def:shap}. Of course a straightforward approach consists in using any of the estimators discussed before in the general formulation of the MMD- or HSIC-Shapley effects. But a closer inspection actually reveals that although this is easy for the HSIC-Shapley effects since both $\HS_u(\bX_A,Y)$ and $\HS_b(\bX_A,Y)$ can be computed for all subsets $A\subseteq\Pd$ with only one sample $\left(\bx^{(i)},y^{(i)}\right)$, $i=1,\ldots,n$, the MMD-Shapley effects require estimators of $\EE_{\bX_A}\left(\textup{MMD}^2(\textup{P}_Y,\textup{P}_{Y \vert \bX_A})\right)$ which do not involve two many calls to the numerical model. Among the estimators introduced in Section \ref{sec:mmdestim}, only the one based on nearest neighbors has a computational cost independent of the number of input variables. This is exactly the framework proposed in \cite{bro20} for the variance-based Shapley effects.

However, as pointed out in \cite{song16} in the case of variance-based Shapley effects, a double-loop Monte-Carlo estimator of the value function $\textup{val}(A)=\var\,\EE\left(Y\vert\bX_{A}\right)/\var\, Y$ can be heavily biased. They show that another value function $\textup{val}'(A)=\EE\var\,\left(Y\vert\bX_{-A}\right)/\var\, Y$ behaves better and gives rise to the exact same Shapley effects (Theorem 1 in \cite{song16}). This is why \cite{bro20} also introduced a nearest neighbor estimator of $\EE\var\,\left(Y\vert\bX_{-A}\right)$ given by
\begin{eqnarray*}
\hat{E}^{\textup{knn}}_{A} =\frac{1}{n_A} \sum_{j=1}^{n_A} \left\{\frac{1}{n_I-1} \sum_{i=1}^n \left[y^{(j^*_{-A}(s(j),i))}-\frac{1}{n_I} \sum_{i=1}^n y^{(j^*_{-A}(s(j),i))}\right]^2\right\}
\end{eqnarray*}
where this time $n_I$ nearest neighbors are used. In a nutshell, the nearest neighbors are used as if they were independent samples from $P_{Y\vert \bX_A=\bx^{(s(j))}}$, which explains why we compute their empirical variance in the formula above. In order to follow the same road for the estimation of MMD-Shapley effects, we first need an equivalent of Theorem 1 from \cite{song16} for a new value function related to the MMD. 
\begin{lemma}[Other formulation of MMD-Shapley effects]
\label{lemma:mmshap}
The Shapley values obtained with value function $\textup{val}'(A)= \EE_{\bX_{-A}}\left[\EE_{\xi\sim\textup{P}_{Y \vert {\bX}_{-A}}} k_{\mathcal{Y}}(\xi,\xi) - \EE_{\xi,\xi'\sim\textup{P}_{Y \vert {\bX}_{-A}}} k_{\mathcal{Y}}(\xi,\xi')  \right]/\M^2_{\textup{tot}}$ are exactly equal to the MMD-Shapley effects from Definition \ref{def:shap} with value function $\textup{val}(A)=\EE_{{\bX}_A}\left(\M^2(\textup{P}_{Y},\textup{P}_{Y \vert {\bX}_A})\right)/\M^2_{\textup{tot}}$.
\end{lemma}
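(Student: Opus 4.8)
The plan is to follow the argument behind Theorem~1 of \cite{song16}, replacing the classical law of total variance by its kernel analogue, Proposition~\ref{prop:lawtotalvar}. Throughout, write $-A$ for the complement of $A$ in $\{1,\ldots,p\}$, and recall that the Shapley value $\phi_l$ of a player $l$ depends on a value function only through the increments $\textup{val}(A\cup\{l\})-\textup{val}(A)$; in particular $\phi_l$ is unchanged when a constant is added to $\textup{val}$.

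\textbf{Step 1: a combinatorial duality for Shapley values.} I would first establish the following purely algebraic fact, which is the heart of the lemma: if two value functions $v,w$ defined on the subsets of $\{1,\ldots,p\}$ satisfy $w(A)=c-v(-A)$ for some constant $c$ and every $A$, then $\phi_l(w)=\phi_l(v)$ for all $l$. Indeed, for $l\notin A$ one has $-(A\cup\{l\})=(-A)\setminus\{l\}$, so the constant cancels in the increment and $w(A\cup\{l\})-w(A)=v(-A)-v\bigl((-A)\setminus\{l\}\bigr)$. Substituting this into
\[
\phi_l(w)=\frac{1}{p}\sum_{l\notin A}\binom{p-1}{|A|}^{-1}\bigl\{w(A\cup\{l\})-w(A)\bigr\}
\]
and reindexing by $B=(-A)\setminus\{l\}$ — which is a bijection from the subsets of $\{1,\ldots,p\}\setminus\{l\}$ onto itself, with $-A=B\cup\{l\}$ and $|A|=p-1-|B|$, hence $\binom{p-1}{|A|}^{-1}=\binom{p-1}{|B|}^{-1}$ by symmetry of the binomial coefficients — turns the right-hand side into $\frac{1}{p}\sum_{l\notin B}\binom{p-1}{|B|}^{-1}\{v(B\cup\{l\})-v(B)\}=\phi_l(v)$.

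\textbf{Steps 2 and 3: applying the duality and concluding.} Next I would invoke Proposition~\ref{prop:lawtotalvar} with $\bX_{-A}$ in place of $\bX_A$ and divide through by $\M^2_{\textup{tot}}$; this gives, for every $A$, the identity $1=\textup{val}'(A)+\textup{val}(-A)$, where $\textup{val}'$ is precisely the value function of the present lemma and $\textup{val}(-A)=\EE_{\bX_{-A}}(\M^2(\textup{P}_{Y},\textup{P}_{Y\vert\bX_{-A}}))/\M^2_{\textup{tot}}$ is the value function of Definition~\ref{def:shap} evaluated at the complement. Thus $\textup{val}'(A)=1-\textup{val}(-A)$, i.e. $w=\textup{val}'$, $v=\textup{val}$ and $c=1$ in Step~1, and therefore the Shapley values built from $\textup{val}'$ coincide with those built from $\textup{val}$, which by Definition~\ref{def:shap} are the MMD-Shapley effects $Sh^{\M}_l$. (As a sanity check both value functions vanish at $\emptyset$: $\textup{val}(\emptyset)=0$ since $\M^2(\textup{P}_Y,\textup{P}_Y)=0$, and $\textup{val}'(\emptyset)=1-\textup{val}(\{1,\ldots,p\})=0$ because $Y=\eta(\bX)$ is $\bX$-measurable, so the conditional dispersion term vanishes; by the constant-shift remark this is anyway immaterial to the equality.)

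\textbf{Expected difficulty.} There is no genuine analytic obstacle here: Proposition~\ref{prop:lawtotalvar} supplies all the probabilistic content and the rest is bookkeeping. The only point requiring care is the reindexing in Step~1 — checking that the binomial weights match under the correspondence $A\leftrightarrow(-A)\setminus\{l\}$ — which is exactly the computation that legitimises the value-function swap in \cite{song16}.
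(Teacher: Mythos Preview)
Your proof is correct and follows essentially the same approach as the paper: both arguments combine the generalized law of total variance (Proposition~\ref{prop:lawtotalvar}) with the combinatorial reindexing from \cite{song16}. The only difference is presentational --- the paper first derives the increment identity $\textup{val}(A\cup\{l\})-\textup{val}(A)=\textup{val}'(B\cup\{l\})-\textup{val}'(B)$ for $B=\Pd\setminus(A\cup\{l\})$ and then defers the bijection to \cite{song16}, whereas you isolate the algebraic duality $w(A)=c-v(-A)\Rightarrow\phi_l(w)=\phi_l(v)$ first and then plug in the probabilistic identity $\textup{val}'(A)=1-\textup{val}(-A)$.
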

The proof is based on the generalization of the law of total variance for the generalized variance $\M^2_{\textup{tot}}$ and is given in Appendix \ref{sec:mmshap}. A nearest neighbor estimator $\widehat{\textup{E}\M}^2_A$ of 
\begin{equation*}
\EE_{\bX_{-A}}\left[\EE_{\xi\sim\textup{P}_{Y \vert {\bX}_{-A}}} k_{\mathcal{Y}}(\xi,\xi) - \EE_{\xi,\xi'\sim\textup{P}_{Y \vert {\bX}_{-A}}} k_{\mathcal{Y}}(\xi,\xi')  \right]
\end{equation*}
is then given by
\begin{eqnarray*}
\widehat{\textup{E}\M}^2_A &=& \frac{1}{n_A} \sum_{j=1}^{n_A} \left\{\frac{1}{n_I} \sum_{i=1}^{n_I} k_{\mathcal{Y}}\left(y^{(j^*_{-A}(s(j),i))},y^{(j^*_{-A}(s(j),i))}\right)\right. \\
&& \left. - \frac{1}{n_I^2} \sum_{i,i'=1}^{n_I} k_{\mathcal{Y}}\left(y^{(j^*_{-A}(s(j),i))},y^{(j^*_{-A}(s(j),i'))}\right) \right\}
\end{eqnarray*}
and the MMD-Shapley effect estimator is
\begin{equation*}
\widehat{Sh}^{\M}_l = \frac{1}{\widehat{\M}^2_{\textup{tot}}}\frac{1}{p} \sum_{A\subseteq\Pd,\, A\niton l} \binom{p-1}{\vert A\vert}^{-1}\ \bigg\{\widehat{\textup{E}\M}^2_{A\cup\{l\}}- \widehat{\textup{E}\M}^2_A\bigg\}.
\end{equation*}
where $\widehat{\M}^2_{\textup{tot}}$ is estimated as in Eq. (\ref{eq:mmdtotestim}).

As a side-note, when the number of input variables is large, the number of terms involved in Shapley effects severely increases and the computational cost to assemble all the terms (even if one uses estimators relying on a given sample only) becomes prohibitive. For such cases it is possible to use a formulation of Shapley effects involving a sum on permutations of $\{1,\ldots,d\}$ instead of a sum on subsets of $\Pd$, which makes it possible to add another level of approximation by computing the sum on a random sample of permutations instead of on all of them \citep{castro09}. Obviously since this trick does not depend on the value function used inside the Shapley values, it can also be used for our kernel-embedding Shapley effects.

\section{Experiments} \label{sec:exp}

In this section we illustrate the behavior of the kernel-based sensitivity indices on several test cases representative of typical GSA industrial applications. In particular, we address the following numerical model categories: a standard scalar output model, a stochastic simulator, a model with a time-series output and a multi-class categorical output simulator with dependent inputs. All the results presented here are reproducible with the \textsf{R} code provided in the supplementary material.

\subsection{Standard scalar output model}
To exemplify the additional insight provided by these indices we first consider a classical GSA test case, the Ishigami function \citep{ishi90} where the output $Y$ is given by
\begin{equation*}
Y = \sin(X_1) + 7\sin(X_2)^2 + X_3^4\sin(X_1)
\end{equation*} 
where $X_l\sim\mathcal{U}(-\pi,\pi)$ for $l=1,\ldots,4$, meaning that we add a dummy input variable $X_4$ for analysis purposes.\\

We start by computing the traditional Sobol' first-order and total sensitivity indices using a pick-freeze estimator as in Section \ref{sec:pickfreeze} with a sample size $n=1000$ and we repeat this calculation 50 times. For each replication the total number of calls to the numerical model is thus $(p+2)n=6000$. We then use the same pick-freeze procedure to estimate the MMD-based first-order and total indices with the exact same samples. For the output we use a Gaussian kernel $k_{\mathcal{Y}}(y,y') = \exp(-\frac{1}{2\sigma^2}(y-y')^2)$ where $\sigma$ is chosen as the median of the pairwise distances between the output samples. Results are given in Figure \ref{fig:example1_MMD}. First note that, as is well known, the first-order Sobol' index of $X_3$ is zero, while its total index is around $0.25$ due to its interaction with $X_1$. $X_2$ is also an important variable, which does not have any interaction since its total Sobol' index is equal to its first-order one. As expected $X_4$ is correctly detected as non-important. The MMD-based indices however bring a different insight: from a probability distribution perspective, one can observe that interactions are much more present since there is a large gap between total and first-order indices for all inputs (except $X_4$ of course). In addition, this time $X_3$ is detected to have a main effect: indeed even though it does not impact the output conditional mean, it influences the tails of the output conditional distribution when it is close to $-2\pi/2\pi$ as was already illustrated in \cite{sdv16}. This shows that MMD-based indices capture other types of input influence than Sobol' ones.\\

\begin{figure}[h!]
\centering
\begin{subfigure}[b]{0.49\textwidth}
         \centering
         \includegraphics[width=\textwidth]{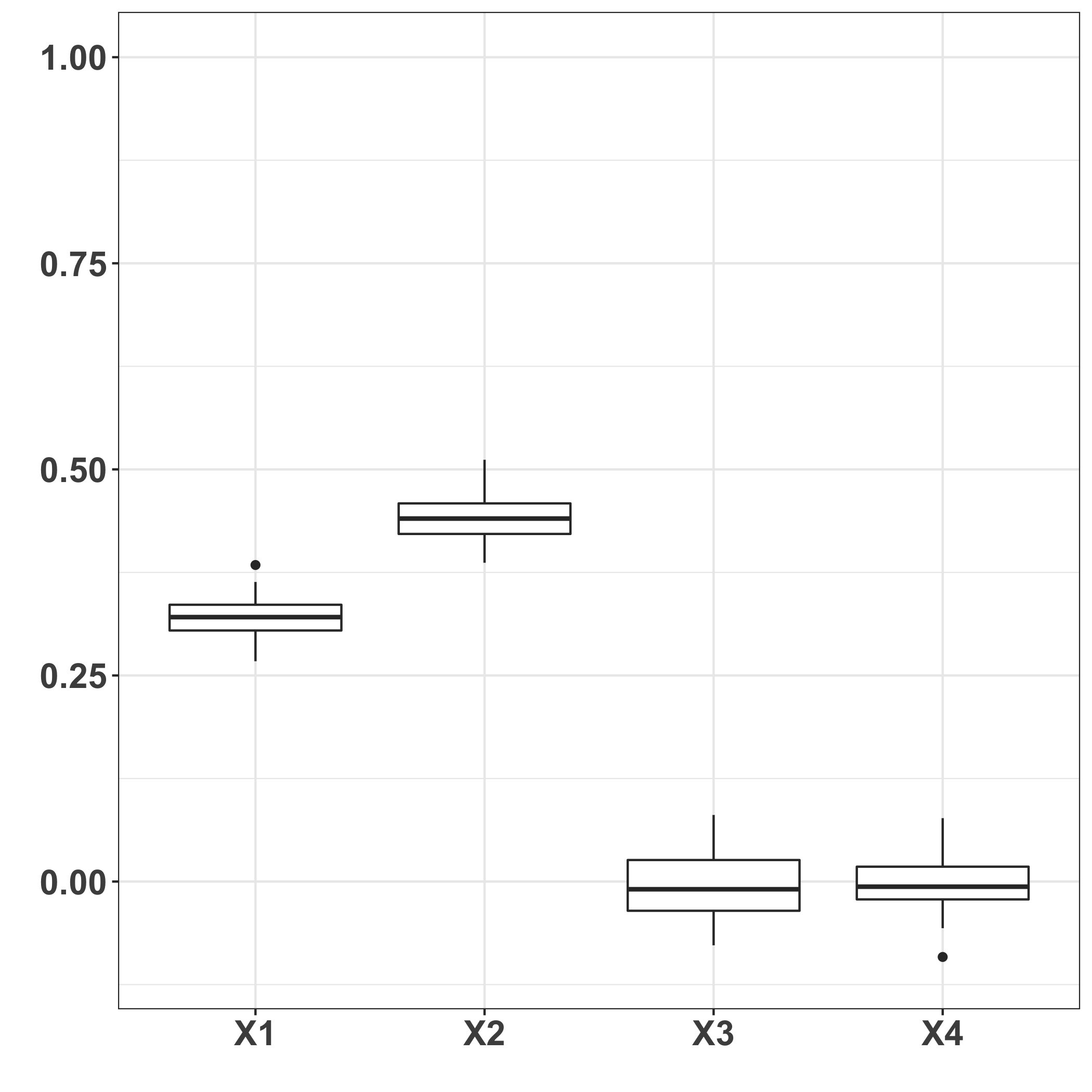}
         \caption{Sobol' first-order index}
     \end{subfigure}
     \hfill
     \begin{subfigure}[b]{0.49\textwidth}
         \centering
         \includegraphics[width=\textwidth]{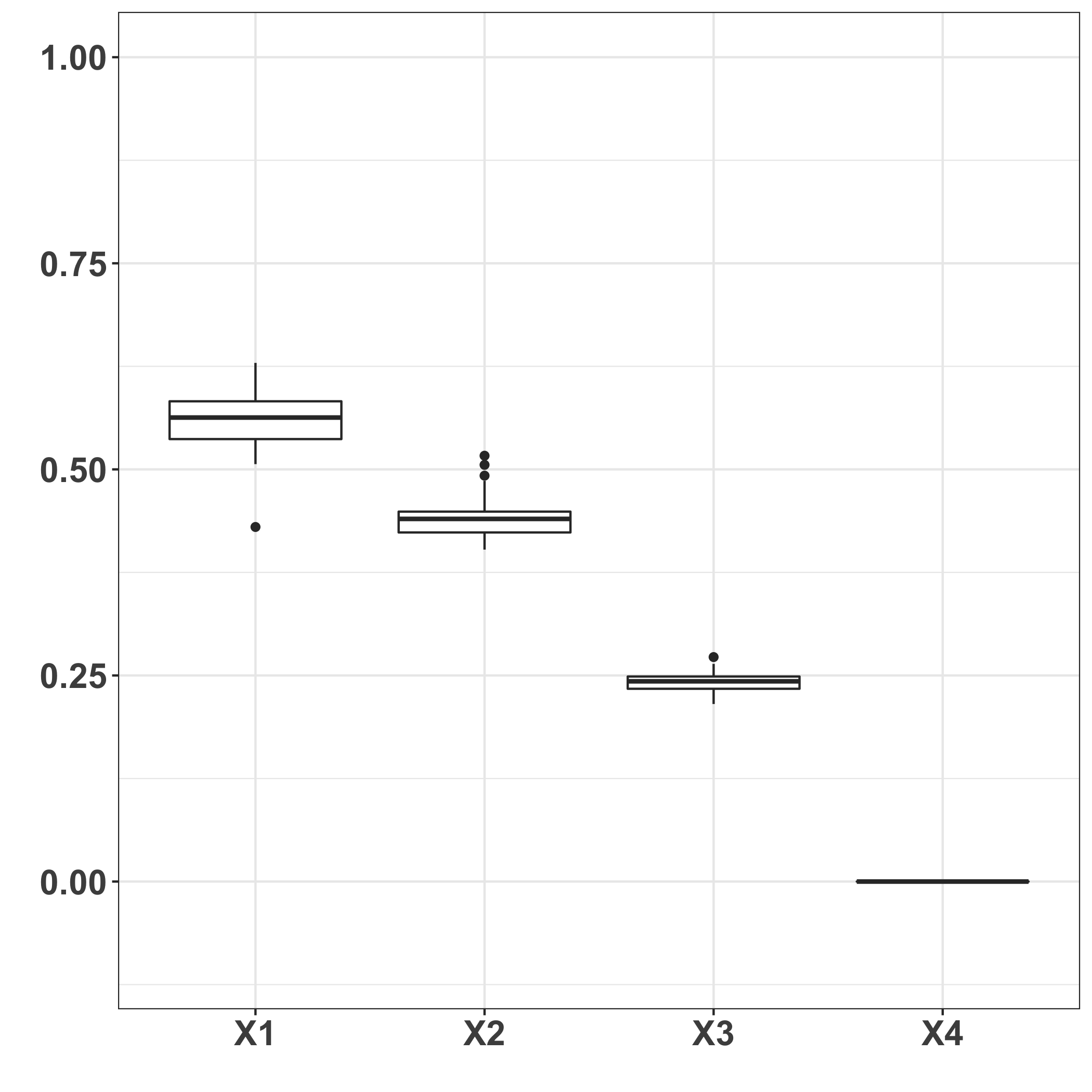}
         \caption{Sobol' total index}
     \end{subfigure}
\begin{subfigure}[b]{0.49\textwidth}
         \centering
         \includegraphics[width=\textwidth]{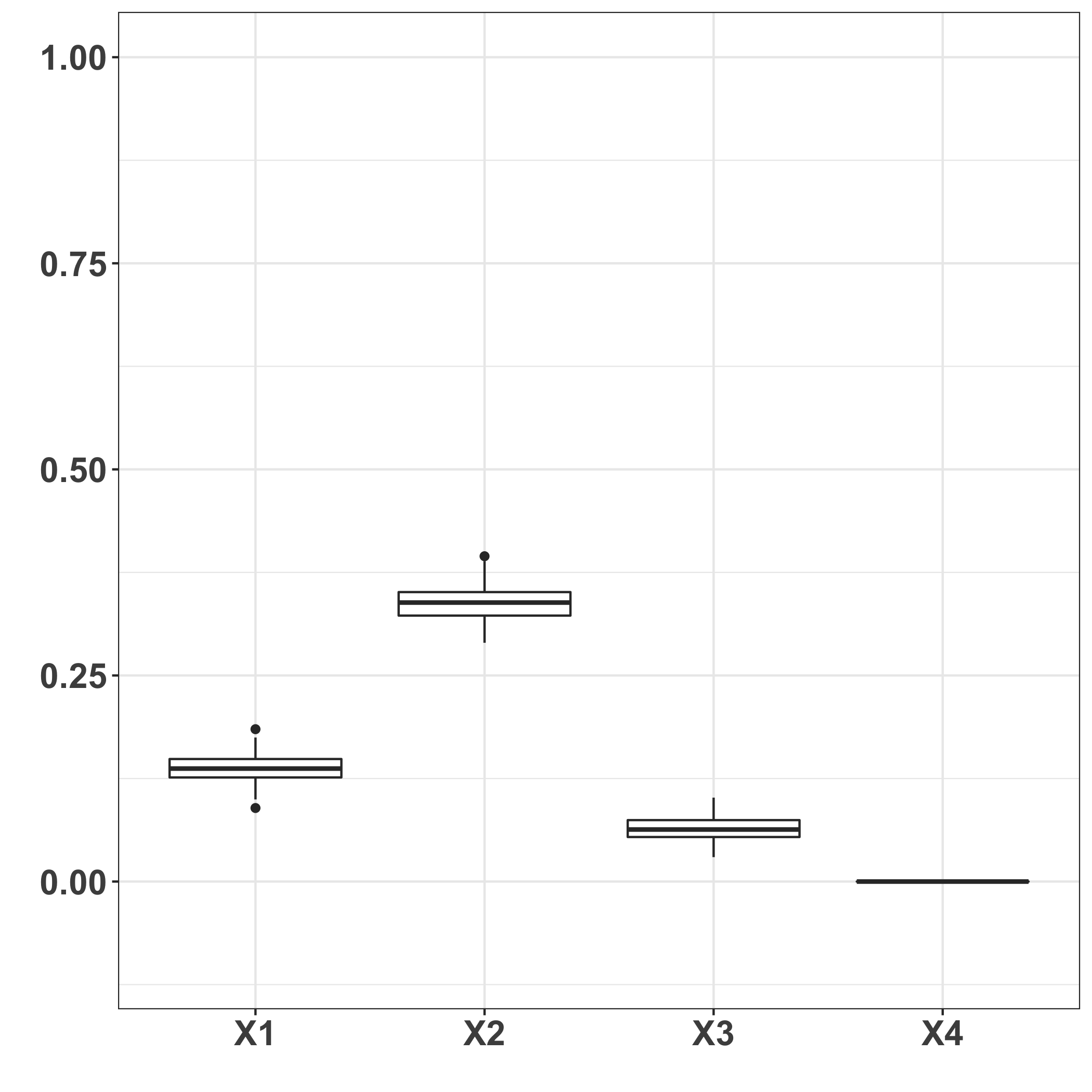}
         \caption{MMD-based first-order index}
     \end{subfigure}
     \hfill
     \begin{subfigure}[b]{0.49\textwidth}
         \centering
         \includegraphics[width=\textwidth]{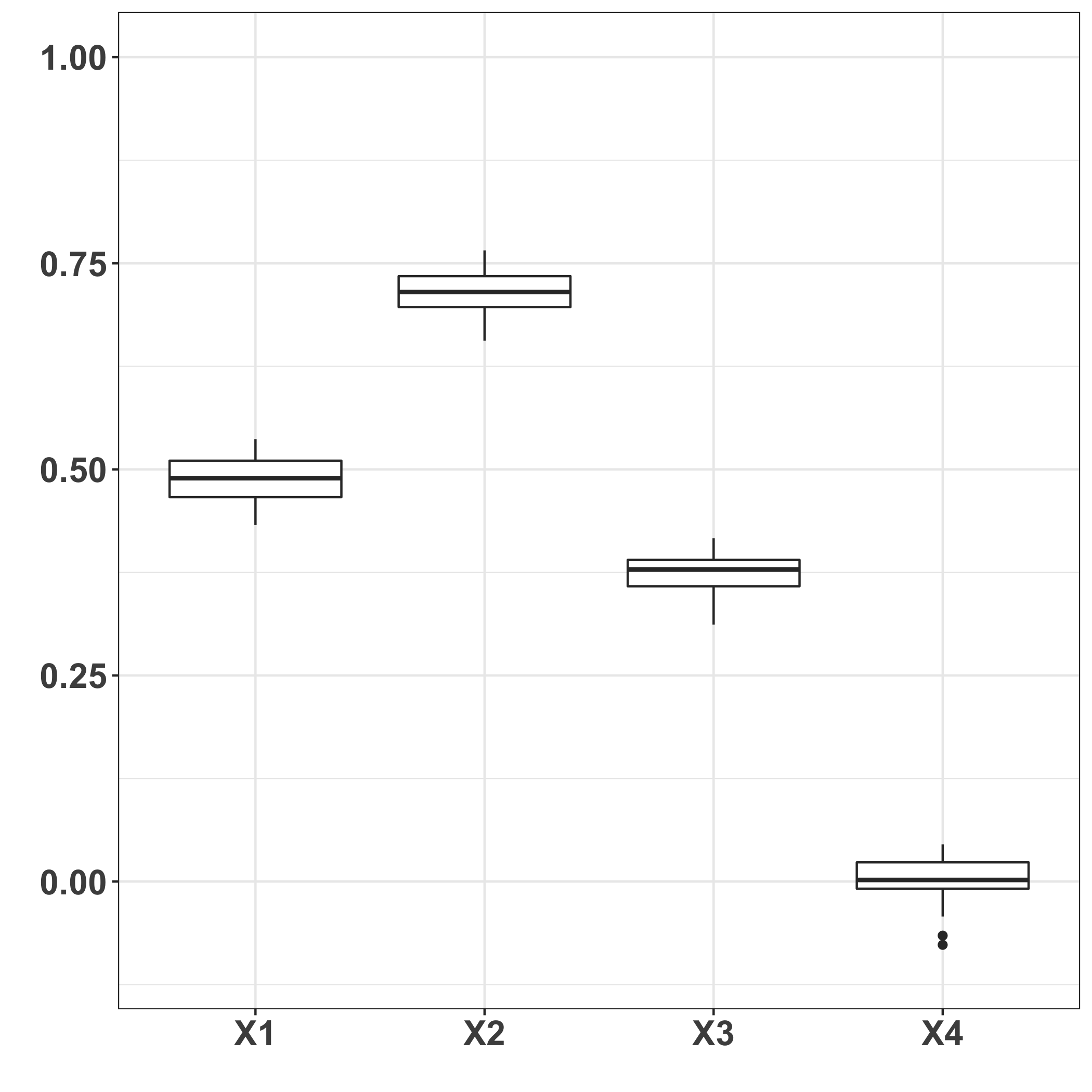}
         \caption{MMD-based total index}
     \end{subfigure}
\caption{Ishigami test case. First-order (a) and total (b) Sobol' indices and first-order (c) and total (d) MMD-based indices with pick-freeze estimators, $n=1000$, 50 replicates.} \label{fig:example1_MMD}
\end{figure}

To take a different view at the inputs/output relationship we also estimate HSIC-based first-order and total indices using the V-statistic of Section \ref{sec:hsicestim}. Again for the output we use the same Gaussian kernel as above, while we use the Sobolev kernel from Eq. (\ref{eq:sobkernel}) for the inputs. Since they are uniform it is easy to renormalize them to satisfy the zero-mean kernel condition. We use only one sample of size $n=1000$ and estimates obtained with 50 replications are reported in Figure \ref{fig:example1_HSIC}. Interestingly, we observe first that with HSIC we no longer detect any interaction: our intuition is that first-order HSIC indices already aggregate a very large family of potential influences and thus interactions may only appear with highly complicated inputs/output link functions. This is supported by the fact that HSIC indices rank the inputs the exact same way at total Sobol' indices. Another appealing property is that to compute all HSIC indices we only need a given sample of moderate size, which is interesting from a screening perspective for GSA on very time-consuming numerical models.

\begin{figure}[h!]
\centering
\begin{subfigure}[b]{0.49\textwidth}
         \centering
         \includegraphics[width=\textwidth]{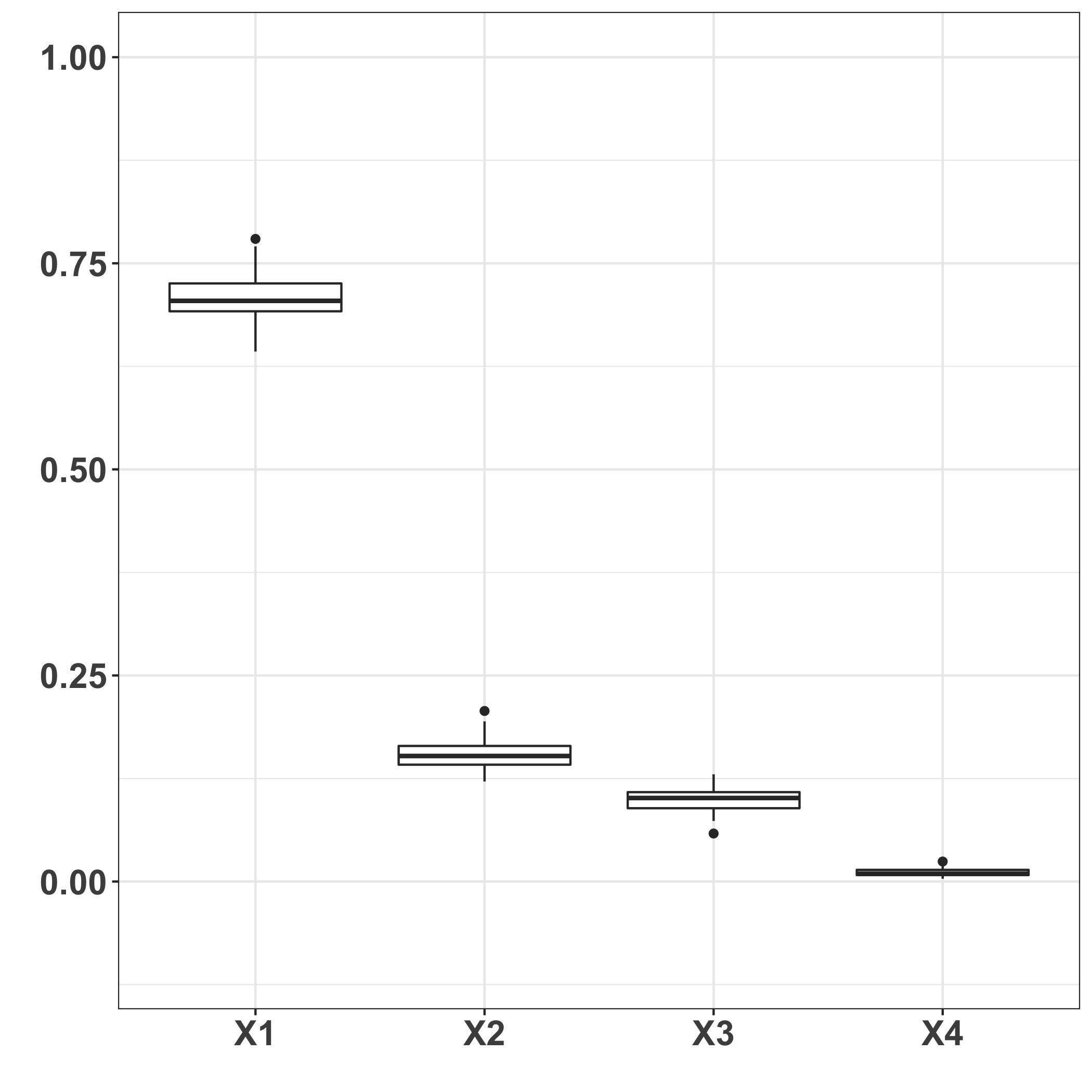}
         \caption{HSIC-based first-order index}
     \end{subfigure}
     \hfill
     \begin{subfigure}[b]{0.49\textwidth}
         \centering
         \includegraphics[width=\textwidth]{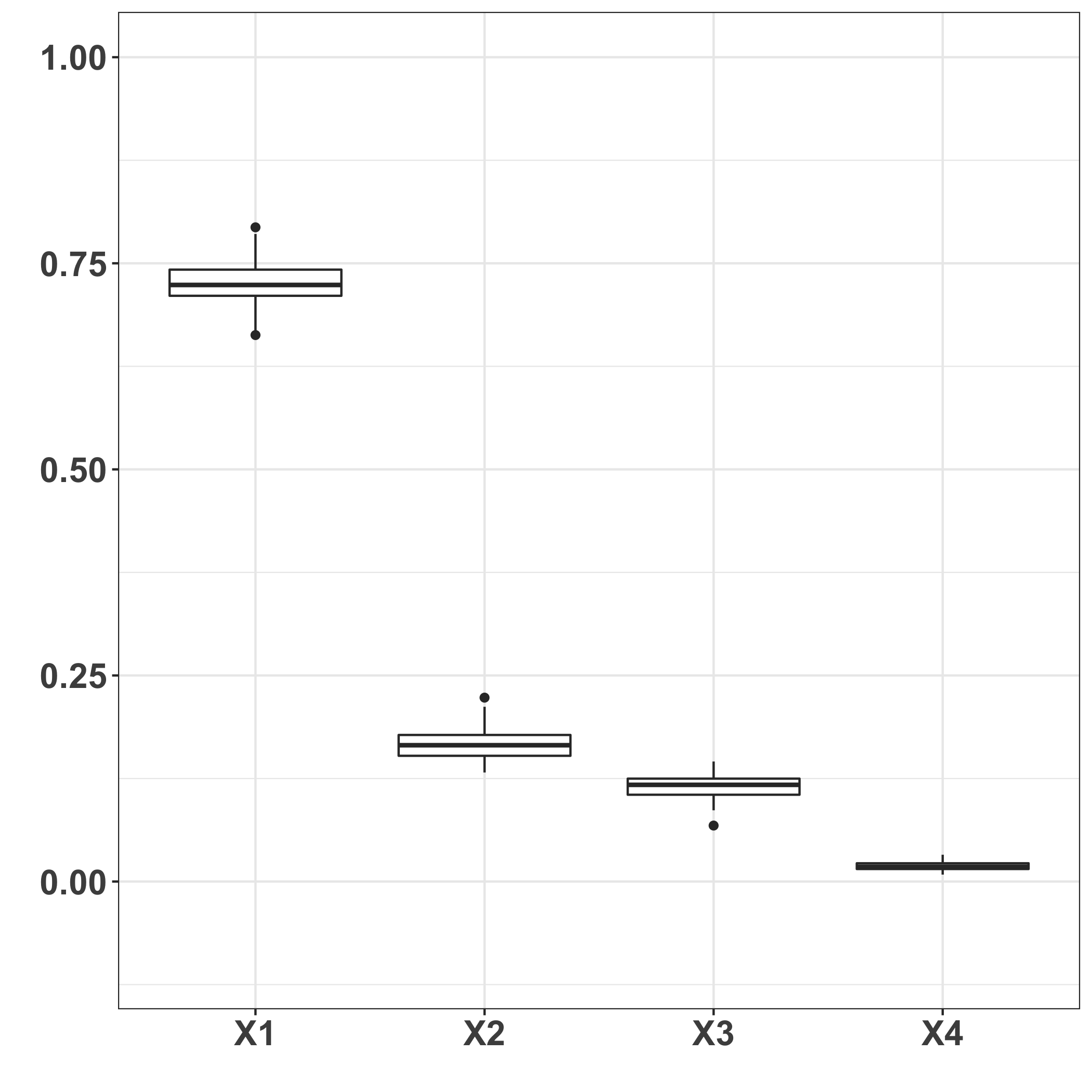}
         \caption{HSIC-based total index}
     \end{subfigure}
\caption{Ishigami test case. First-order (a) and total (b) HSIC-based indices with V-statistic estimator, $n=1000$, 50 replicates.}
\label{fig:example1_HSIC}
\end{figure}

\subsection{Stochastic simulator}

Our second illustration is a more original setting for GSA which consists of a stochastic simulator where the numerical model outputs a probability distribution, or rather a sample from a probability distribution in practice, for a fixed value of the input variables. Here we use a test case proposed in \cite{mou15} which involves five input variables and writes
\begin{equation*}
Y = (X_1+2X_2+U_1)\sin(3X_3-4X_4+N) + U_2 + 5X_5B + \sum_{i=1}^5 iX_i
\end{equation*}
where $X_1,\ldots,X_5\sim\mathcal{U}(0,1)$ are the input variables and $U_1\sim\mathcal{U}(0,1)$, $U_2\sim\mathcal{U}(1,2)$, $N\sim\mathcal{N}(0,1)$ and $B\sim\textup{Bernoulli}(1/2)$ are additional random variables which are responsible for the simulator stochasticity. Note that we modify the constant in front of $X_5B $ to lessen the effect of $X_5$ as compared to \cite{mou15}. An example of the output distribution for 20 random fixed values of the input variables obtained each time with a sample of size $100$ for the stochastic ones is given in Figure \ref{fig:example2_KDE}.\\

\begin{figure}[h!]
\centering
     \includegraphics[width=0.8\textwidth]{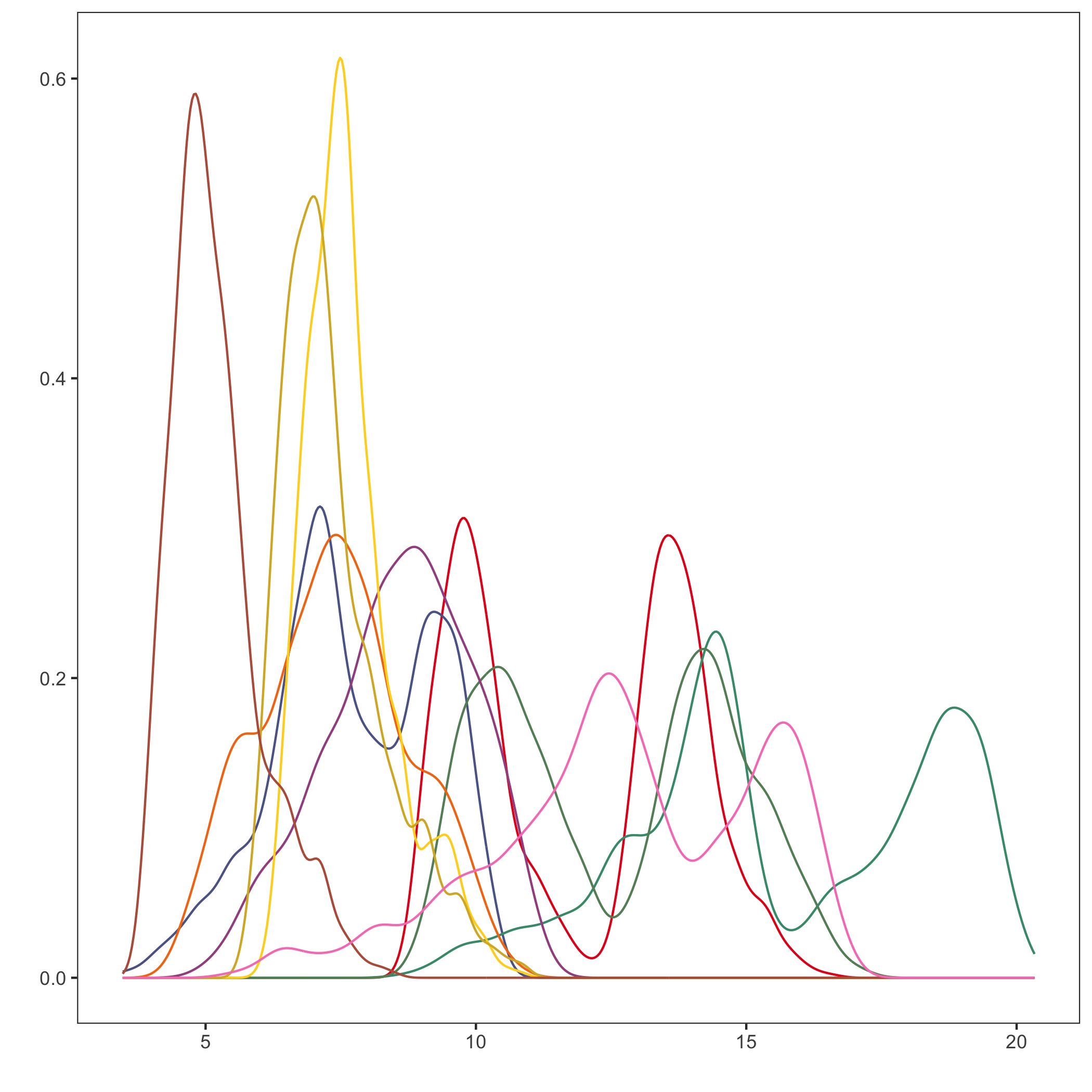}
\caption{Stochastic simulator test case. Output probability distribution for $20$ values of the input variables chosen at random. The distribution is estimated with a kernel-density estimator.}
\label{fig:example2_KDE}
\end{figure}

Leaving aside for now the whole output distribution, we first place ourselves in a standard GSA deterministic setting by first analyzing the input influence on both the output mean and standard deviation (with respect to $U_1$, $U_2$, $N$ and $B$).  We thus compute Sobol' indices for these two outputs of interest with a pick-freeze estimator with a sample of size $n=1000$ and perform $50$ replications, see Figure \ref{fig:example2_Sobol}. It shows that interactions are negligible, and that $X_5$ is clearly the most influential input by far: it explains alone $65\%$ of the output mean variability and $75\%$ of the output standard deviation variability. This is expected since $X_5$ is coupled with $B$, which creates the multi-modal feature of the output distribution. The output mean variability also depends on $X_3$ and $X_4$ to some lesser extent, and the output standard deviation variability on $X_2$.\\

\begin{figure}[h!]
\centering
\begin{subfigure}[b]{0.49\textwidth}
         \centering
         \includegraphics[width=\textwidth]{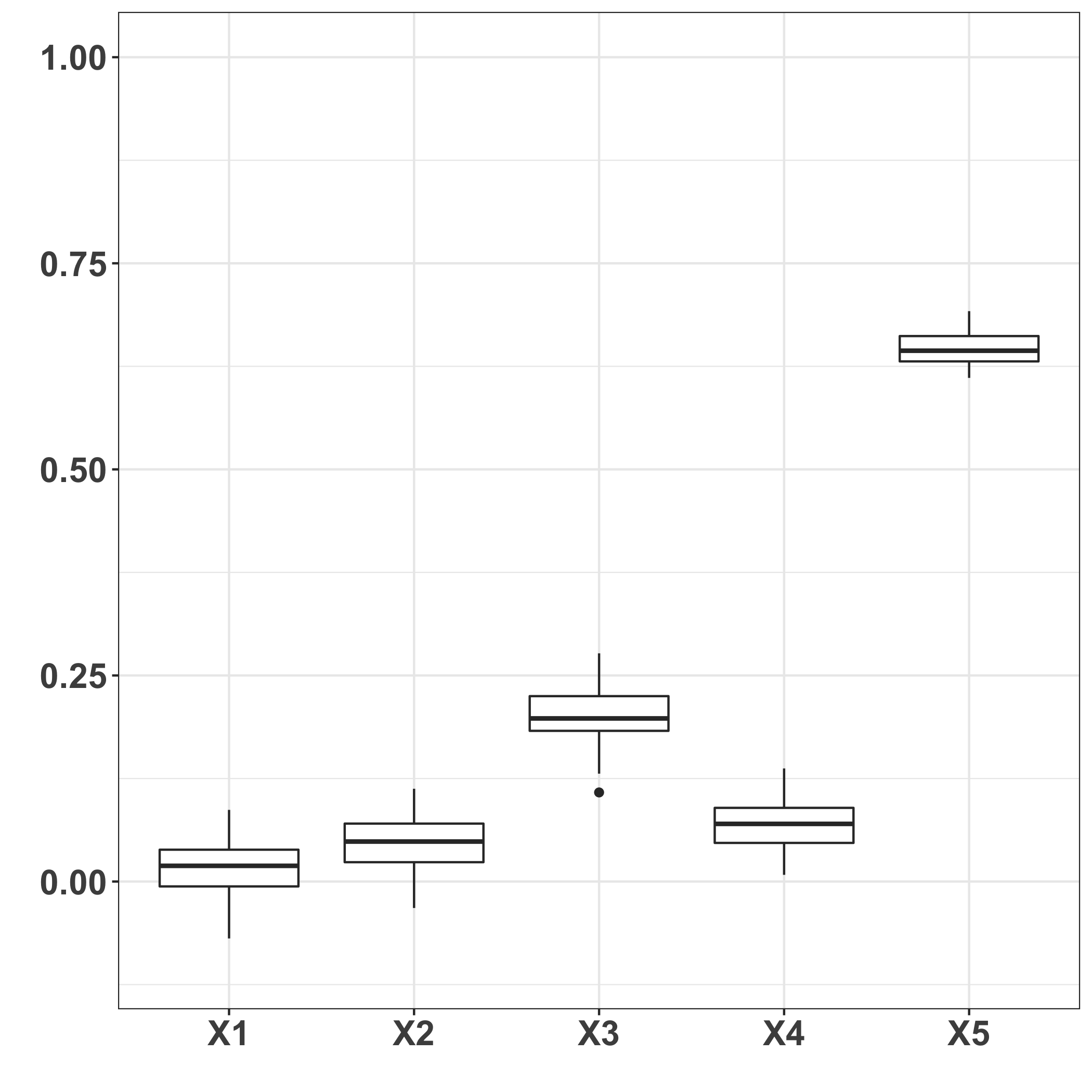}
         \caption{Sobol' first-order index of the output mean}
     \end{subfigure}
     \hfill
     \begin{subfigure}[b]{0.49\textwidth}
         \centering
         \includegraphics[width=\textwidth]{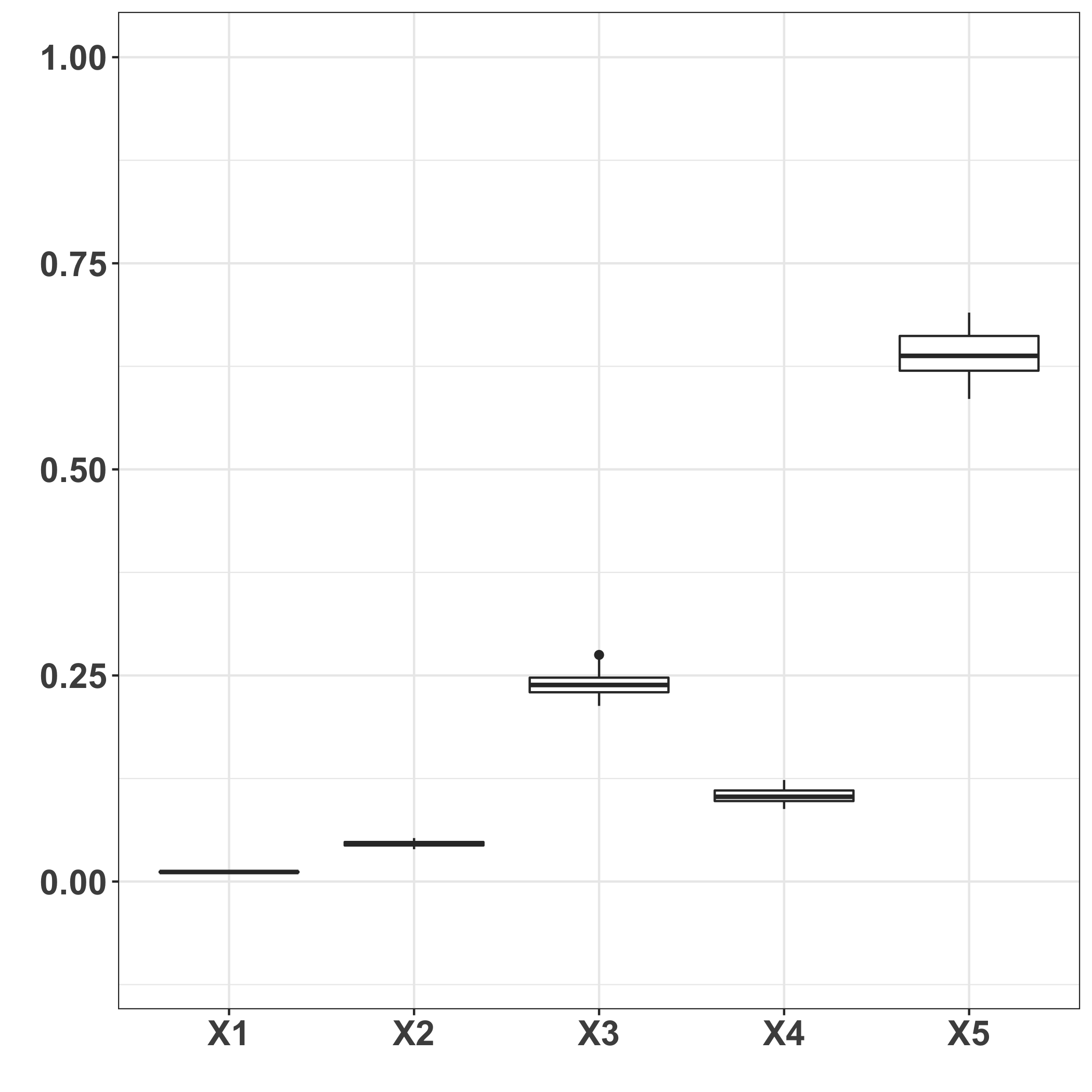}
         \caption{Sobol' total index of the output mean}
     \end{subfigure}
\begin{subfigure}[b]{0.49\textwidth}
         \centering
         \includegraphics[width=\textwidth]{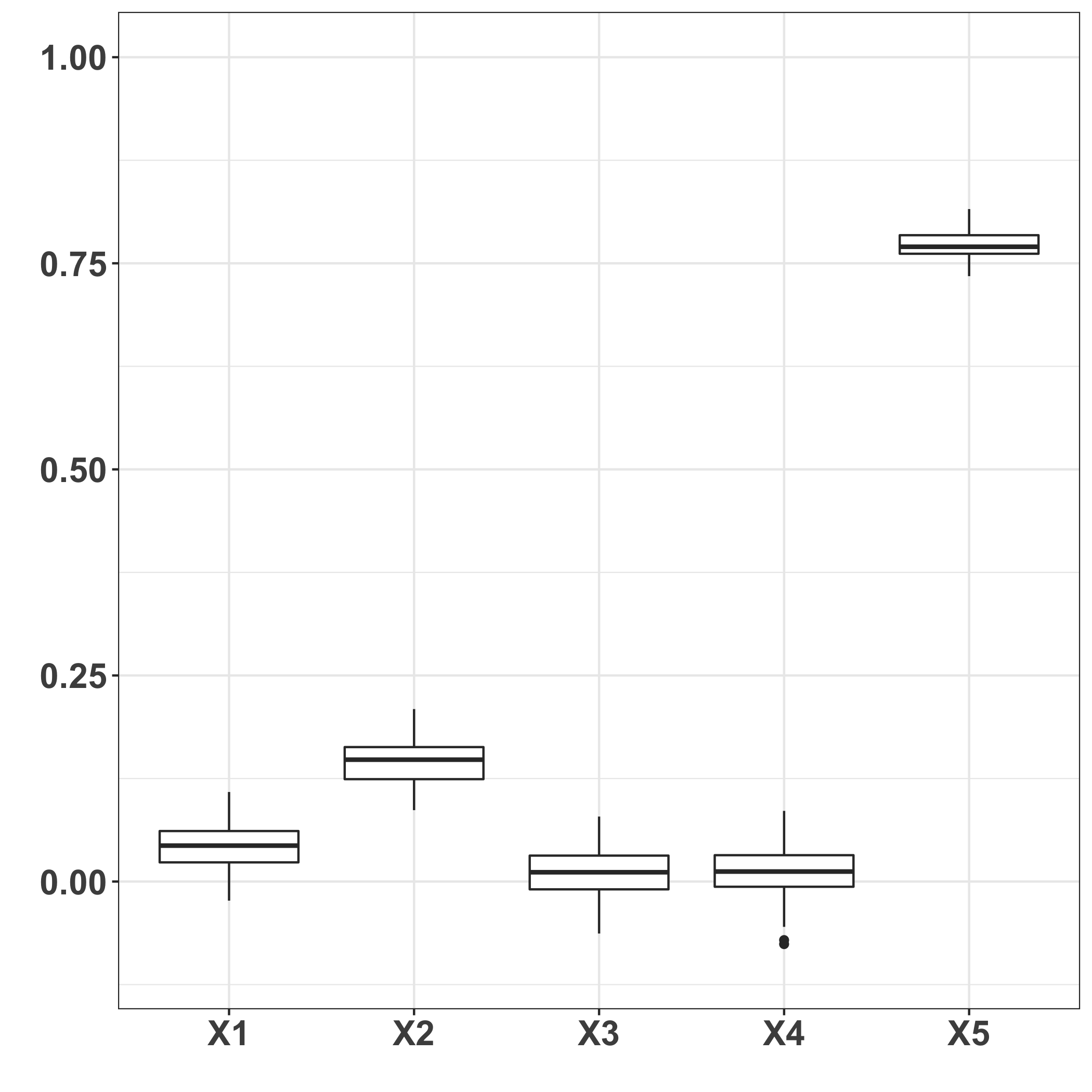}
         \caption{Sobol' first-order index of the output standard deviation}
     \end{subfigure}
     \hfill
     \begin{subfigure}[b]{0.49\textwidth}
         \centering
         \includegraphics[width=\textwidth]{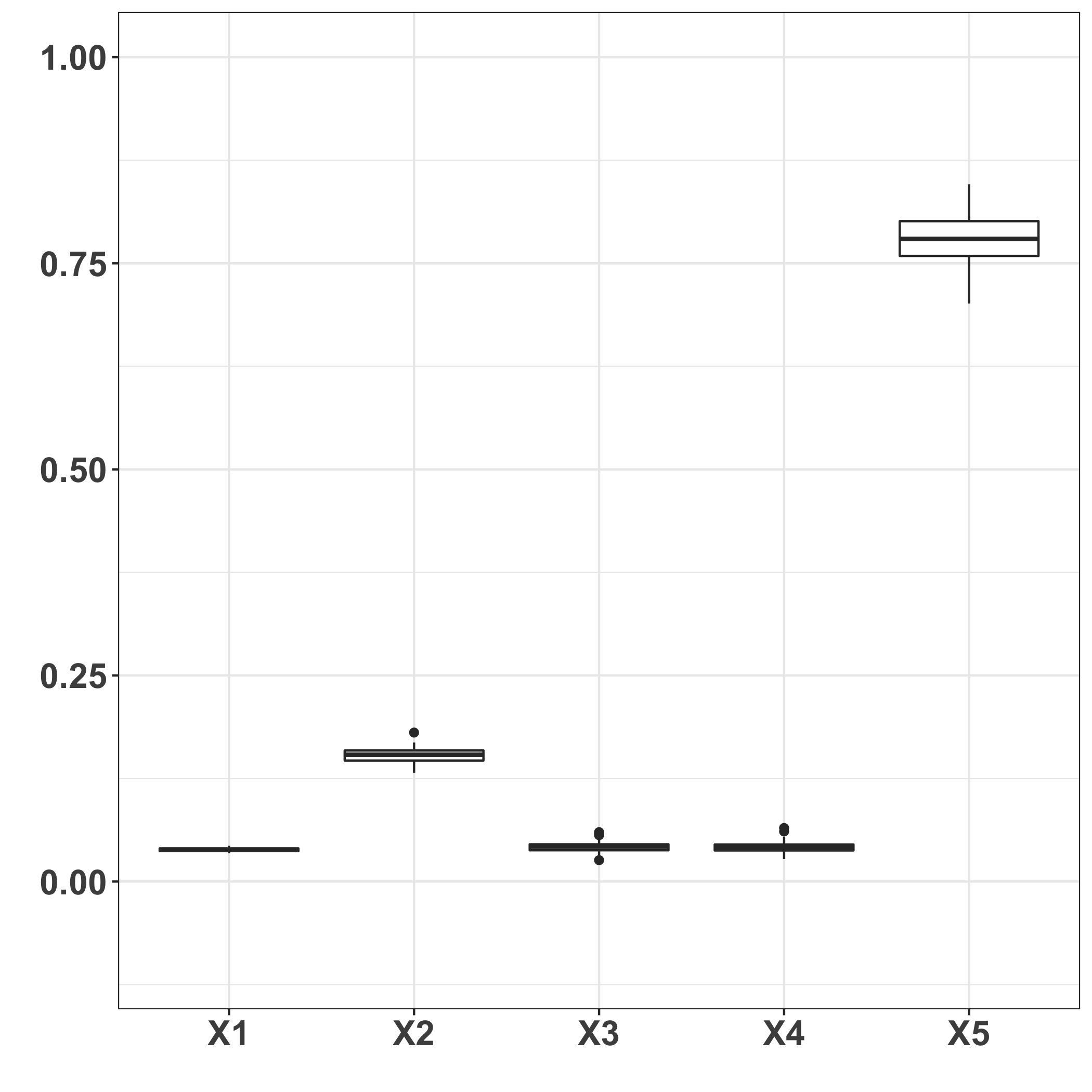}
         \caption{Sobol' total index of the output standard deviation}
     \end{subfigure}
\caption{Stochastic simulator test case. First-order (a) and total (b) Sobol' indices of the output mean and first-order (c) and total (d) Sobol' indices of the output standard deviation with pick-freeze estimators, $n=1000$, 50 replicates.} \label{fig:example2_Sobol}
\end{figure}

We now make use of the kernel framework to compute MMD- and HSIC-based sensitivity indices which can accommodate directly the output distribution thanks to the specific kernels discussed in Section \ref{sec:specific}. More precisely we use the kernel of Eq. (\ref{eq:kerneldistr}) with $\sigma^2=1$ and $\lambda$ chosen as the median of the $\M^2$ computed on the preliminary sample used for visualization in Figure \ref{fig:example2_KDE}, with a kernel $k_{\mathcal{Y}}(y,y') = \exp(-\frac{1}{2\tau^2}(y-y')^2)$  and $\tau$ chosen as the median of the pairwise distances between the output samples. We only compute first-order indices here and use for illustration the rank estimator of the MMD index from Section \ref{sec:mmdrank} while for HSIC we use again the Sobolev kernel. Results with $50$ replications and a sample of size $n=200$ are given in Figure \ref{fig:example2_MMDHSIC}. Both indices coincide and identify $X_5$ as the most important input variable, as well as a small influence of $X_3$, $X_2$ and $X_4$ while $X_1$ is non-important: considering the whole output distribution variability via the specific kernel is comparable to an aggregation of the variability on the output mean and standard deviation (and other moments we did not compute above).

\begin{figure}[h!]
\centering
\begin{subfigure}[b]{0.49\textwidth}
         \centering
         \includegraphics[width=\textwidth]{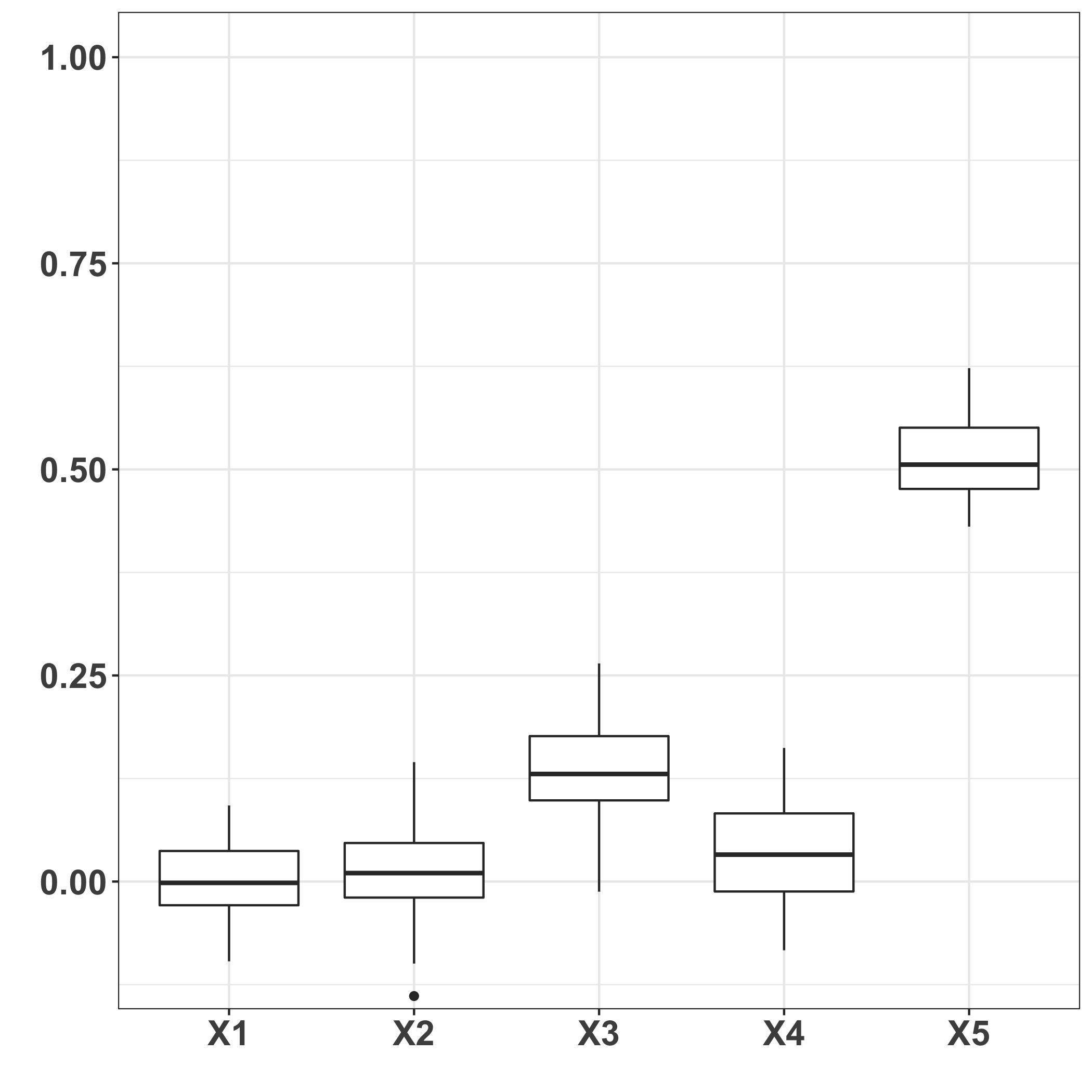}
         \caption{MMD first-order index}
     \end{subfigure}
     \hfill
     \begin{subfigure}[b]{0.49\textwidth}
         \centering
         \includegraphics[width=\textwidth]{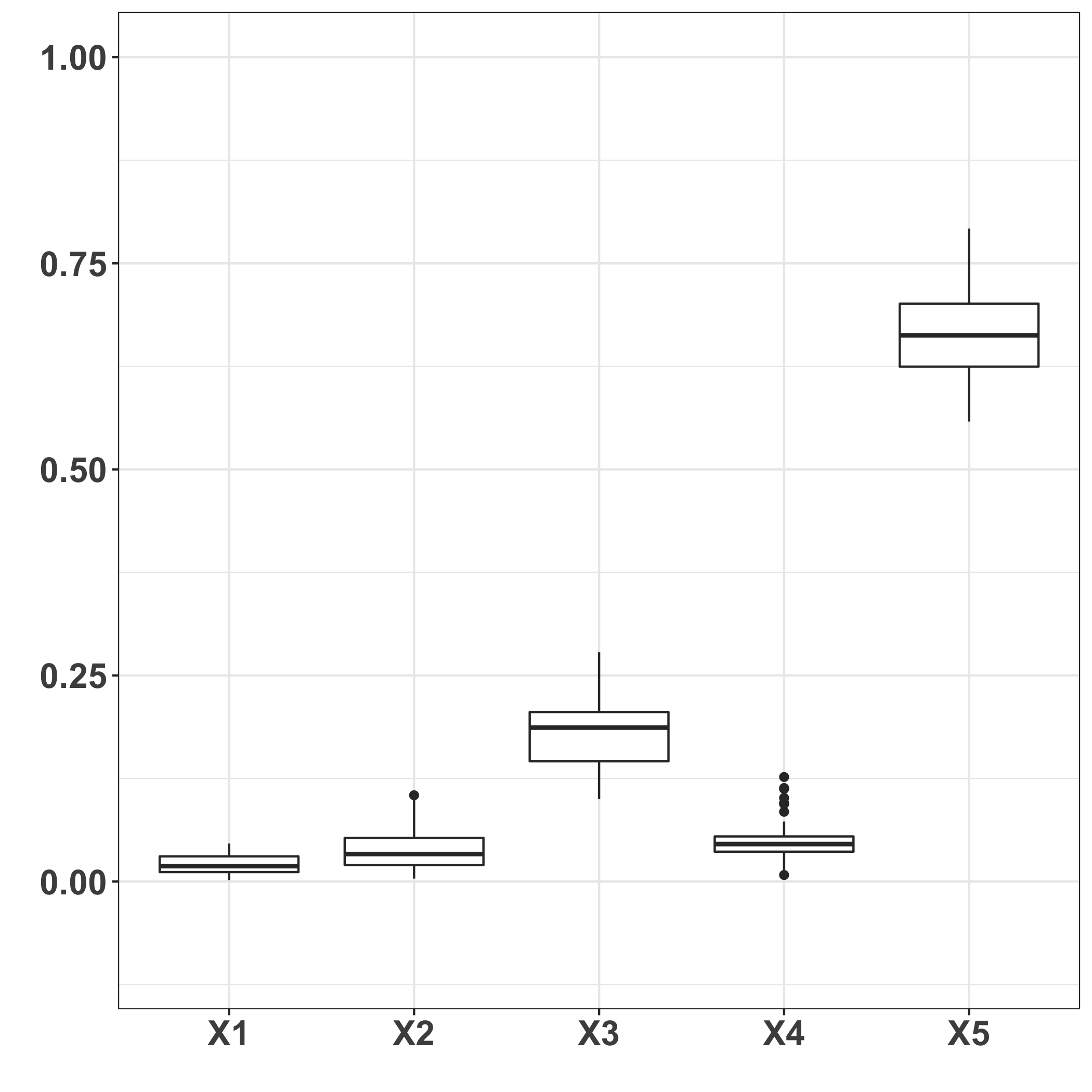}
         \caption{HSIC first-order index}
     \end{subfigure}
\caption{Stochastic simulator test case. First-order MMD (a) and HSIC (b) indices of the output distribution with rank and V-statistic estimators, respectively, $n=200$, 50 replicates.} \label{fig:example2_MMDHSIC}
\end{figure}

\subsection{Functional output}

Another commonly encountered industrial application is a physics-based numerical simulator involving functional outputs, such as curves representing the evolution over time of some system characteristics (\textit{e.g.} pressure, temperature, ...). To illustrate how time-series kernels can easily handle GSA on such systems we build a simplified compartmental epidemiological model inspired by previous works on COVID-19 \citep{magal20,char20,did20}. Our model is a straightforward Susceptible - Infected - Recovered (SIR) model \citep{ker27} which is slightly modified, in the sense that it accounts for two different types of infectious people: the reported cases, which we assume are isolated and can non longer contaminate others, and the unreported cases who can infect others. A summary of this compartment model proposed by \cite{magal20} is given in Figure \ref{fig:example3_sirmodel}. 

\begin{figure}[h!]
\centering
      \includegraphics[width=\textwidth]{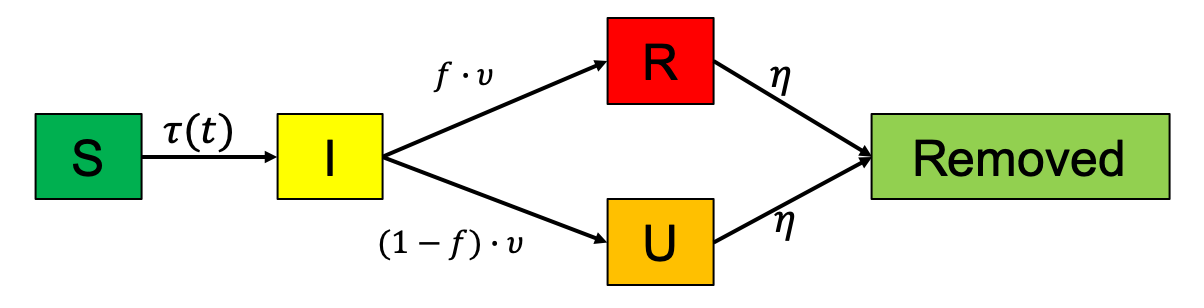}
\caption{Functional simulator test case. The modified SIR model with $4$ compartments following \cite{magal20} .} \label{fig:example3_sirmodel}
\end{figure}

$S$ consists of the susceptible individuals who are not yet infected. During the epidemic spread they are infected depending on the time-dependent transmission rate $\tau(t)$. Once infected they mode to compartment $I$ where are the asymptomatic infectious individuals. After a period of $\eta$ days they become symptomatic and a fraction $f$ of them is detected and go to compartment $R$, while the rest of them are undetected and go to compartment $U$. After a recovering period of $\eta$ days symptomatic people from $R$ and $U$ recover and go to the last compartment. Observe that this is a highly simplified representation of the epidemic where we do not account for hospitalizations, testing strategies or deaths: our goal here is not to be representative of COVID-19 but rather exemplify how GSA can be applied to such models.\\

The dynamics of the evolution of individuals from a compartment to another is modeled with the following system of ordinary differential equations:
\begin{eqnarray*}
\frac{dS}{dt} &=& - \tau S(I+U)\\
\frac{dI}{dt} &=& \tau S(I+U) - \nu I\\
\frac{dR}{dt} &=& f\nu I - \eta R\\
\frac{dU}{dt} &=& (1-f)\nu I - \eta U
\end{eqnarray*}
The transmission rate is chosen according to \cite{magal20} where they propose a parametric form given by $\tau(t) = \tau_0 \exp(-\mu \max(t-N,0))$. The underlying assumption is that before the epidemic outbreak the transmission rate is constant equal to $\tau_0$ and it then decreases with an exponential decay with rate $\mu$ once social distancing and lockdown start to have an effect after $N$ days. They further assume that the cumulative number of reported cases $CR(t)$ is approximately 
\begin{equation*}
CR(t) = \chi_1 \exp(\chi_2 t) - 1
\end{equation*}
where $\chi_1$ and $\chi_2$ are to be estimated on data. From this assumption they get the value of the initial conditions $I_0$, $U_0$, $R_0$
\begin{equation*}
I_0 = \frac{\chi_2}{f\nu},\ U_0 = \frac{(1-f)\nu}{\eta+\chi_2}I_0,\ R_0=1
\end{equation*}
and with in our case $S_0=66.99\times10^6$ is the initial susceptible population (here in France). From a GSA perspective we then assume that we have uncertainty on the following $6$ input variables: $\tau_0$, $\mu$, $N$ (transmission rate), $\eta$, $\nu$ (days until symptoms and recovery) and $\chi_2$ (which impacts the initial conditions). $f$ is assumed to be fixed at a fraction equal to $0.1$. We assign uniform distributions to the input variables with ranges consistent with the values from \cite{magal20}, \textit{i.e.} $\tau_0\sim\mathcal{U}(5.9\times10^{-9},6.1\times10^{-9})$, $\mu\sim\mathcal{U}(0.028,0.036)$, $N\sim\mathcal{U}(8,15)$, $1/\eta\sim\mathcal{U}(5,9)$, $1/\nu\sim\mathcal{U}(5,9)$ and $\chi_2\sim\mathcal{U}(0.32,0.4)$. An example of the dynamics of compartments $I$ and $R$ for $20$ values of the inputs chosen at random according to these uniform distributions is given in Figure \ref{fig:example3_Curves}.\\

\begin{figure}[h!]
\centering
\begin{subfigure}[b]{0.49\textwidth}
         \centering
         \includegraphics[width=\textwidth]{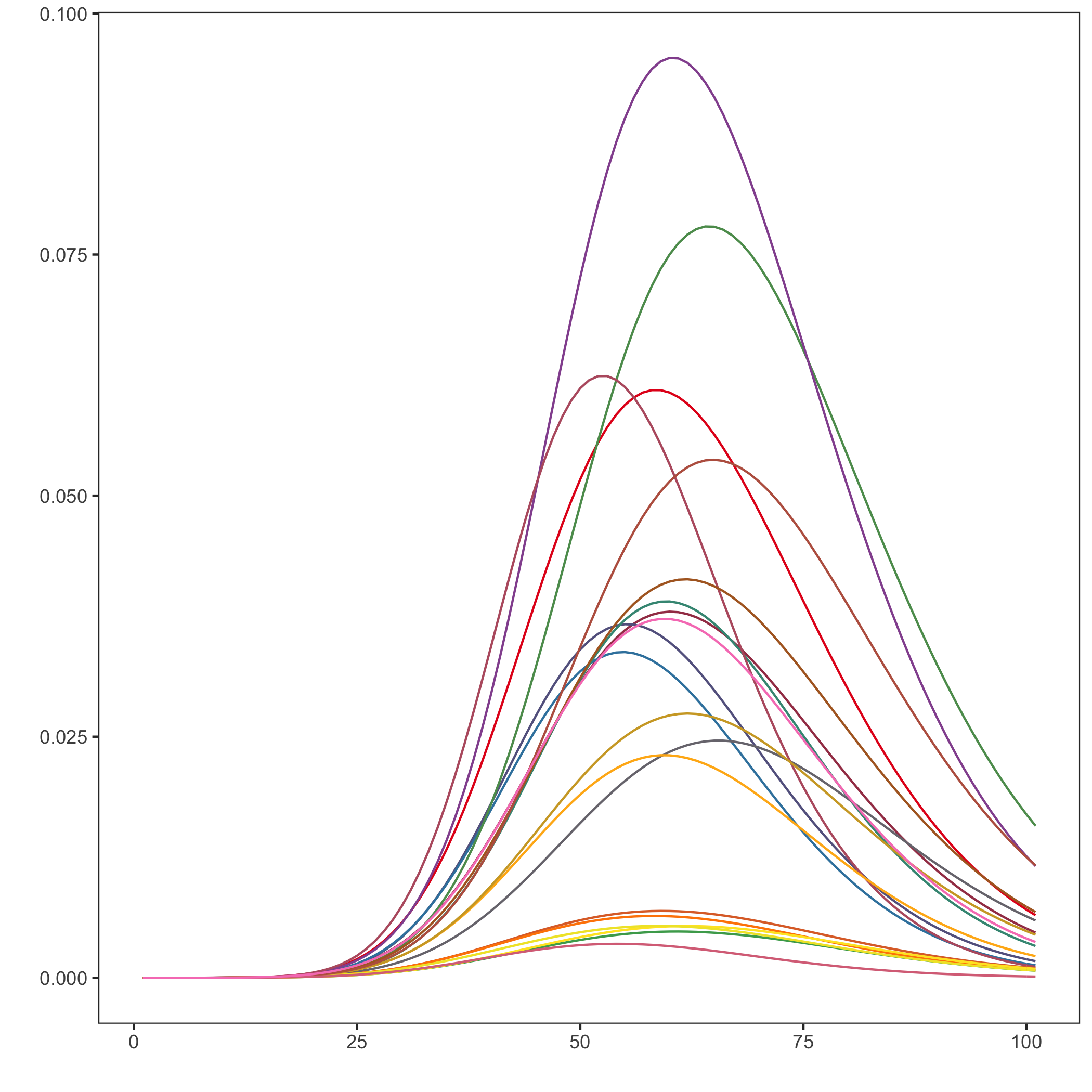}
         \caption{Infectious cases}
     \end{subfigure}
     \hfill
     \begin{subfigure}[b]{0.49\textwidth}
         \centering
         \includegraphics[width=\textwidth]{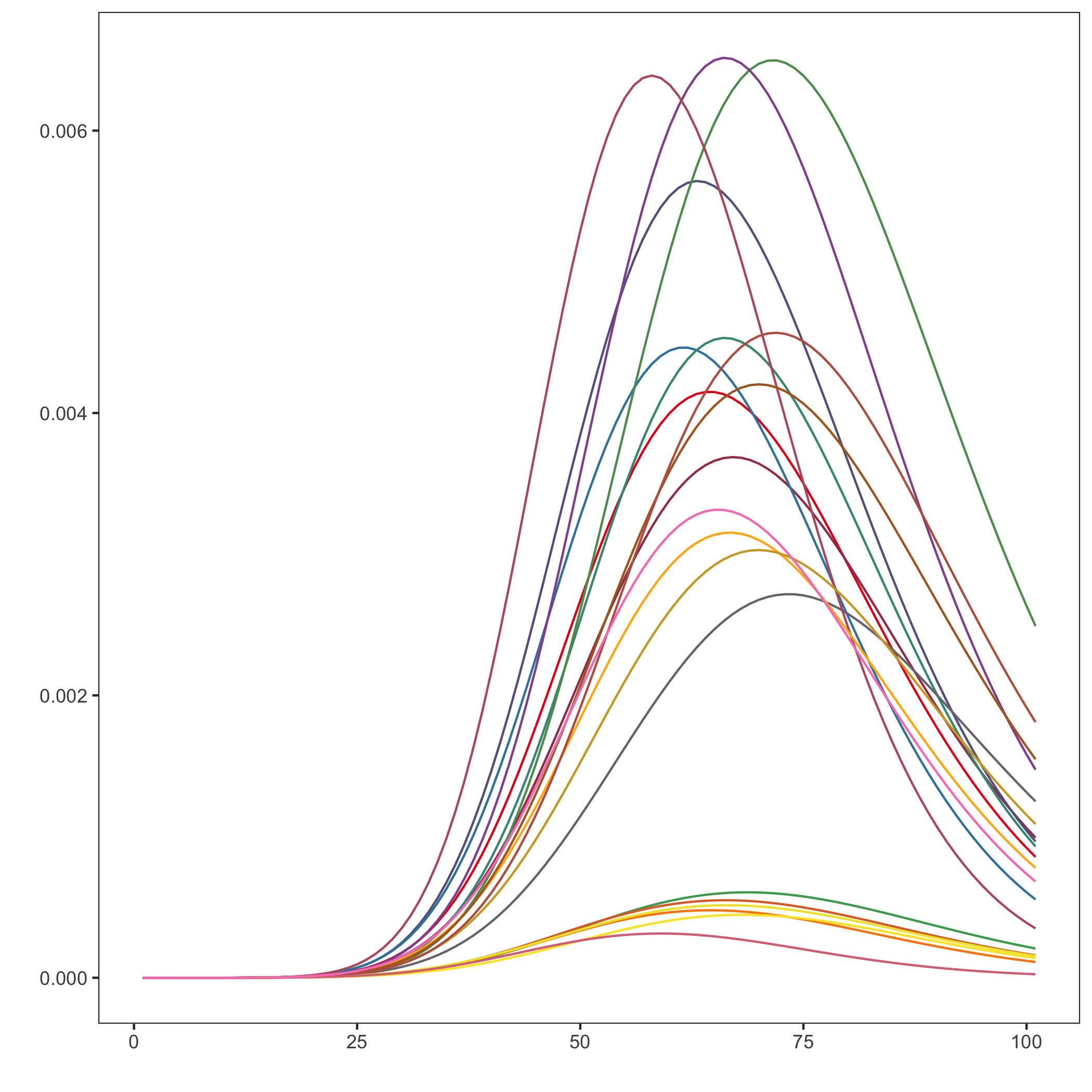}
         \caption{Reported cases}
     \end{subfigure}
\caption{Functional simulator test case. Output dynamics over time for compartment $I$ (left) and $R$ (right) for $20$ values of the input variables chosen at random. They are both normalized by the total population $S_0$.} \label{fig:example3_Curves}
\end{figure}

For GSA we rely on the global-alignment kernel of \cite{cut11} designed for time-series, which searches for all their alignments and averages them, and use it inside our first-order HSIC indices for the output, whereas we still employ the Sobolev kernel for the inputs. The results obtained with $50$ repetitions with a sample of size $n=200$ and the V-statistic estimator are reported in Figure \ref{fig:example3_HSIC}.

\begin{figure}[h!]
\centering
\begin{subfigure}[b]{0.49\textwidth}
         \centering
         \includegraphics[width=\textwidth]{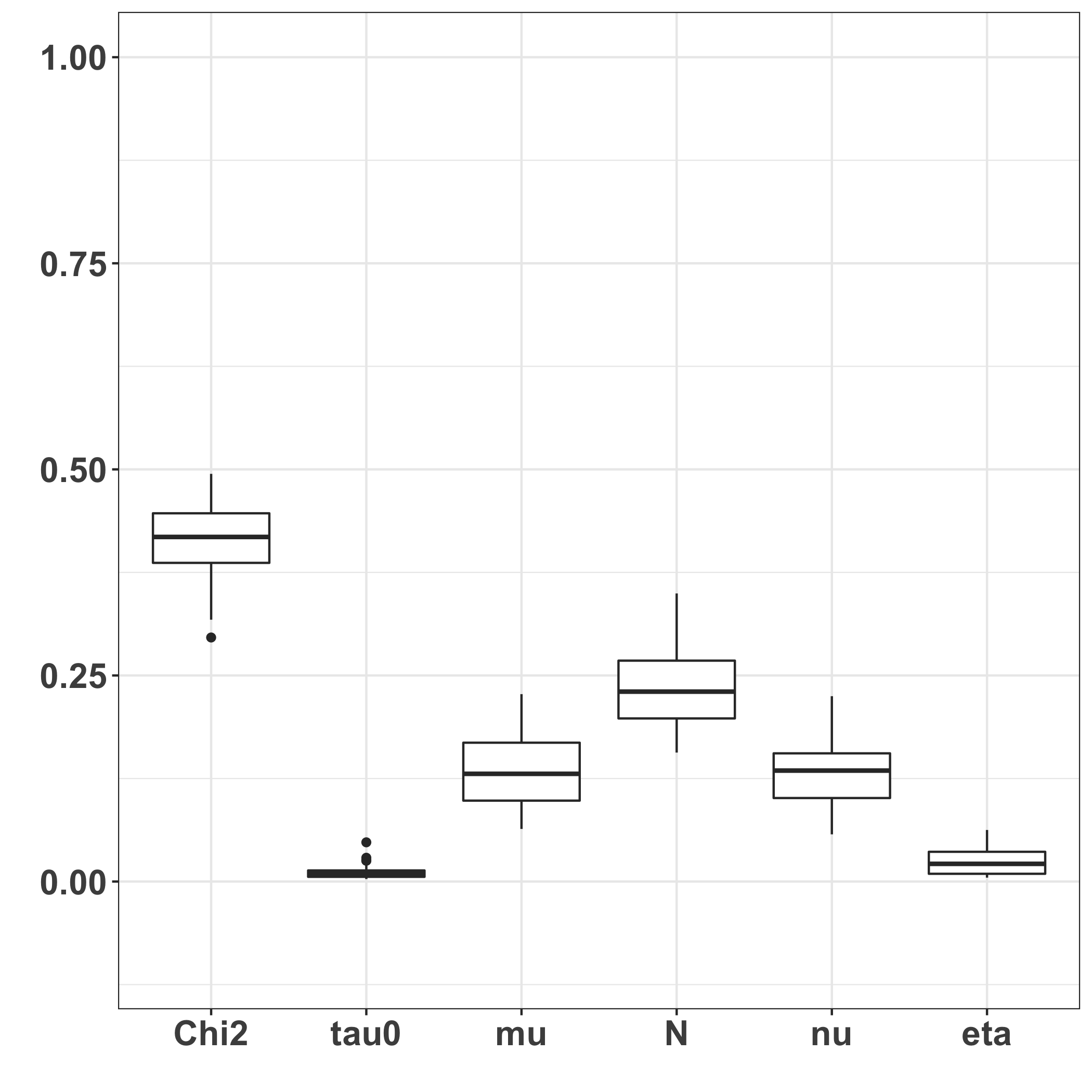}
         \caption{First-order HSIC index for compartment $I$}
     \end{subfigure}
     \hfill
     \begin{subfigure}[b]{0.49\textwidth}
         \centering
         \includegraphics[width=\textwidth]{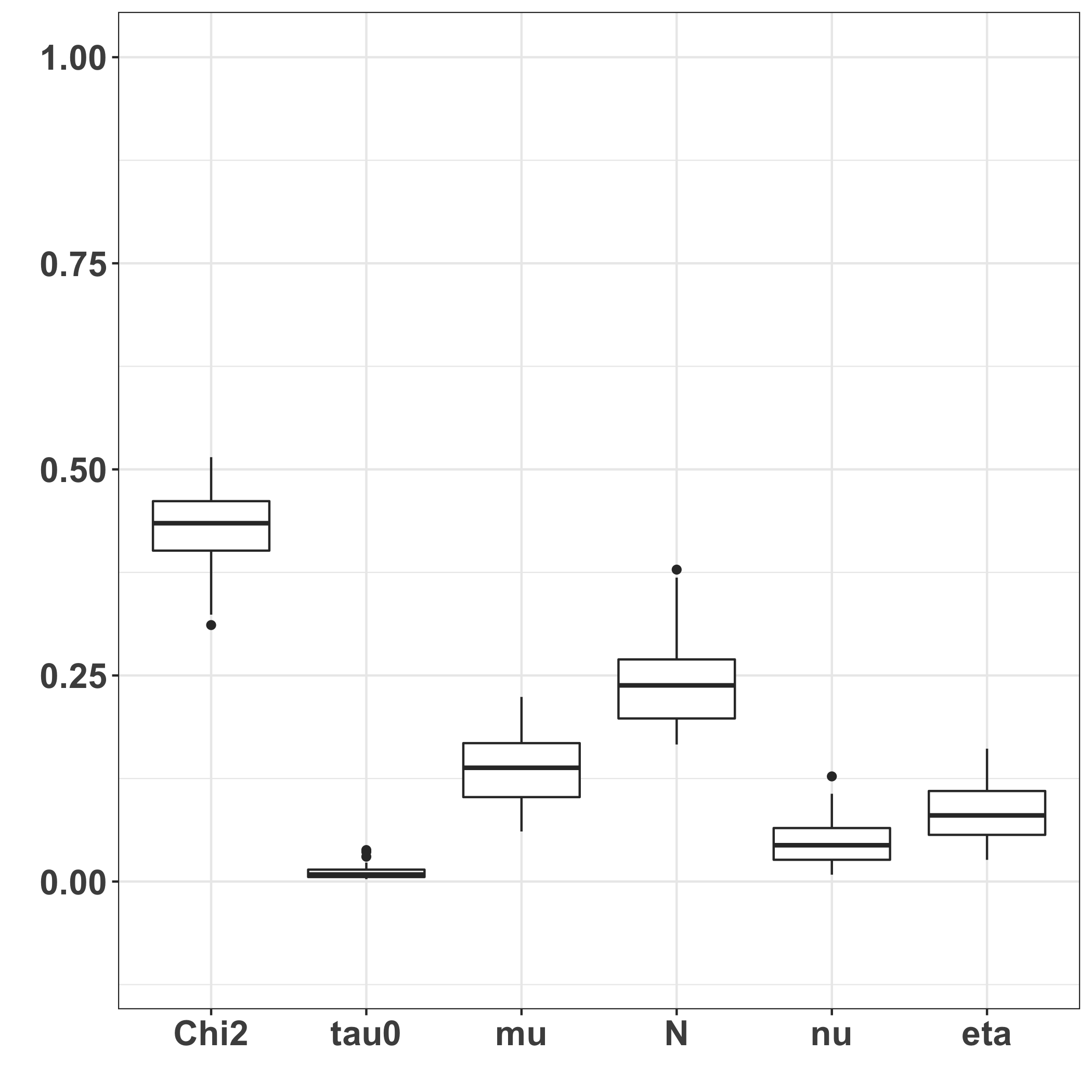}
         \caption{First-order HSIC index for compartment $R$}
     \end{subfigure}
\caption{Functional simulator test case. First-order HSIC index for compartments $I$ (left) and $R$ (right) with V-statistics estimator, $n=200$, $50$ replicates.} \label{fig:example3_HSIC}
\end{figure}

For both compartments the most influential input is $\chi_2$, as expected since it influences the initial conditions, and then $N$ and $\mu$ related to the transmission rate. $\nu$ has also an impact for compartment $I$ but not $\eta$, which is coherent with the ordinary differential equations, and one can see on the contrary that $\eta$ influences compartment $R$.

\subsection{Multi-class output with dependent inputs}

Finally we investigate a numerical model with both a categorical output (to make use of the discussion from Section \ref{sec:specific}) and dependent inputs (to analyze kernel-embedding Shapley effects from Section \ref{sec:shapley}). We build upon the famous wine quality data set \citep{cortez09} of the UCI repository \citep{dua17}. This dataset consists of $4898$ observations of wine qualities (categorical variable with levels $0$ to $10$ corresponding to a score) associated to $11$ features obtained with physicochemical tests. In order to place ourselves in a standard computer experiments setting (\textit{i.e.} a numerical simulator and uncertain inputs with given probability distribution) we use this dataset to design a GSA scenario detailed in the following steps:
\begin{enumerate}
\item We regroup wine quality scores into only $3$ categories: low (score less than $5$), medium (score equal to $6$) and high (score higher than $7$) in order to have a balanced dataset. We also use a small subsample of size $600$ of white wine only from the initial $4898$ observations for faster estimation of the input dependence structure;
\item We estimate a random forest model between the wine quality and the $11$ inputs from this transformed dataset and compute variable importance for each input. The variable importance score is used to select only $4$ important features among the initial $11$ ones (volatile acidity, chlorides, density and alcohol). This is absolutely not a mandatory step, but we choose to do so for both a faster computation of Shapley effects and estimation of the input dependence structure. A new random forest model is finally built with these $4$ input variables only, and the predictor serves as our numerical simulation model;
\item The samples from the $4$ input variables identified above are used to estimate a vine copula structure which models their dependence \citep{czado19}. Once the vine copula is estimated, it is then easy to generate new input samples as much as required.\\
\end{enumerate}

MMD- and HSIC-Shapley effects are then computed with a sample size of $n=1000$ with a dirac categorical kernel for the output and a Sobolev kernel for the inputs in the HSIC case. For MMD we use the nearest-neighbor estimator of Section \ref{sec:shapleyestim} and for HSIC the V-statistic estimator and we repeat the estimation $50$ times, see Figure \ref{fig:example4_Shapley}.

\begin{figure}[h!]
\centering
\begin{subfigure}[b]{0.49\textwidth}
         \centering
         \includegraphics[width=\textwidth]{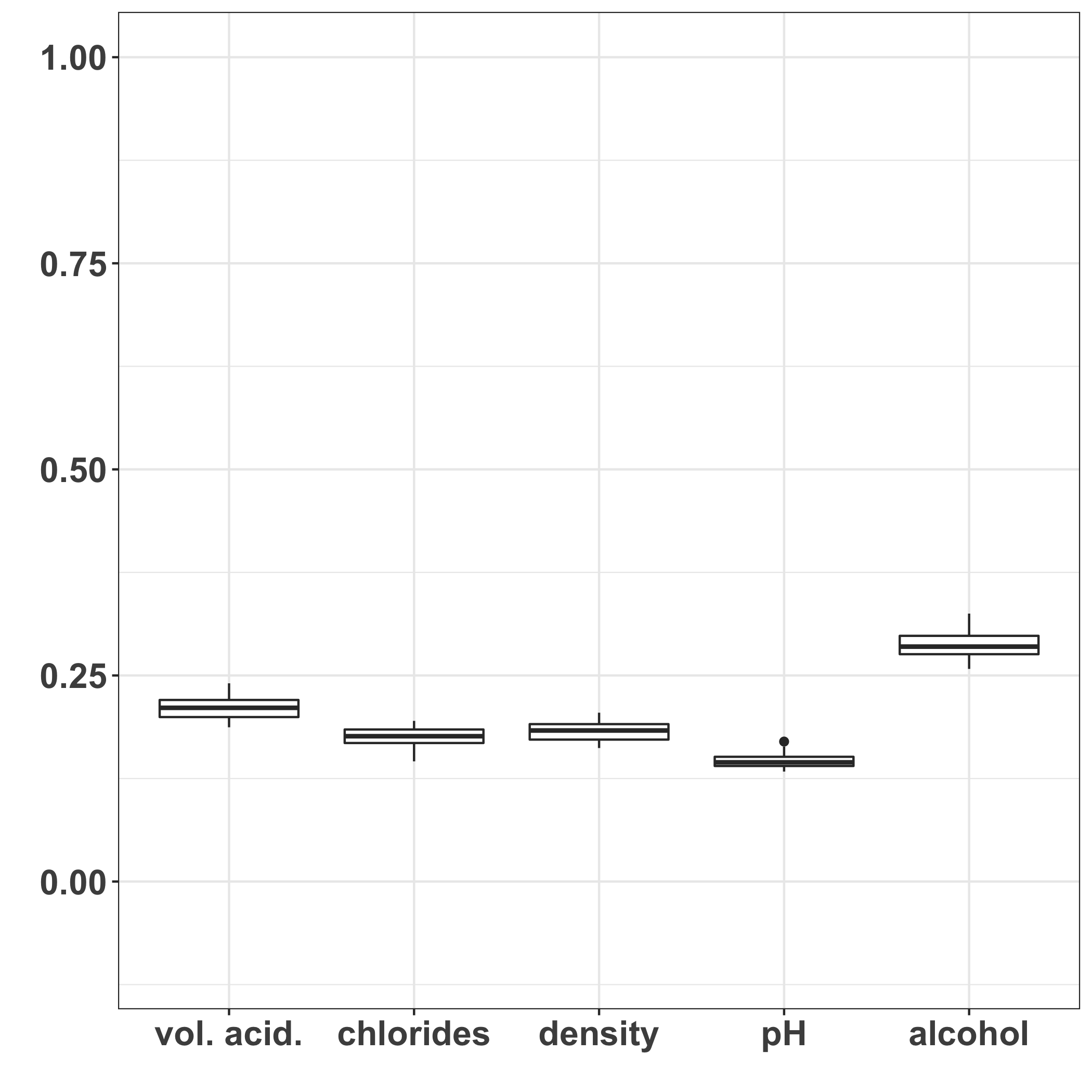}
         \caption{MMD-Shapley effect}
     \end{subfigure}
     \hfill
     \begin{subfigure}[b]{0.49\textwidth}
         \centering
         \includegraphics[width=\textwidth]{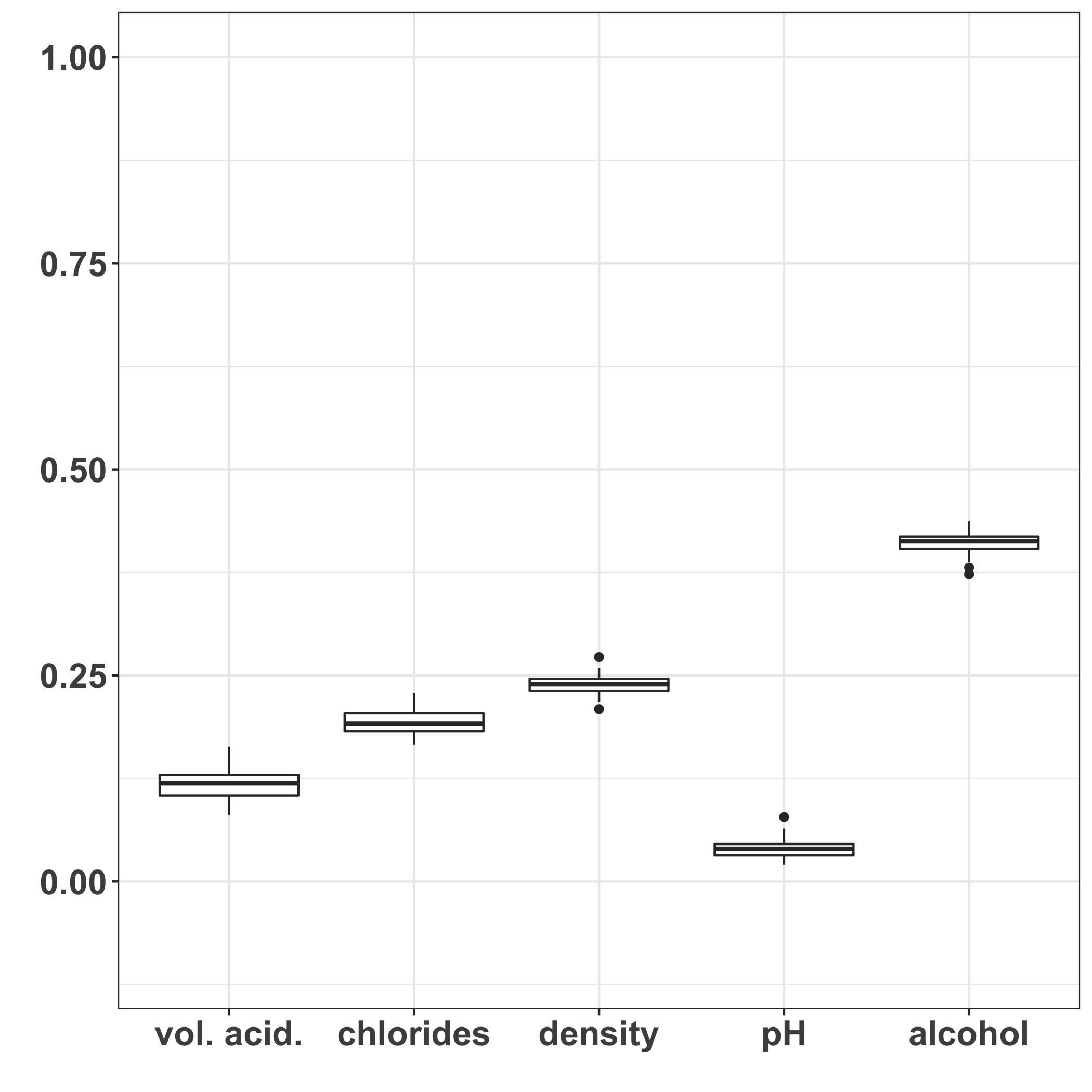}
         \caption{HSIC-Shapley effect}
     \end{subfigure}
\caption{Multi-class output  test case. MMD- (a) and HSIC- (b) Shapley effects with nearest-neighbor and V-statistic estimators, respectively, $n=1000$, 50 replicates.} \label{fig:example4_Shapley}
\end{figure}

Both kernel-embedding Shapley effects identify alcohol as the most influential input, which was expected from the variable importance scores computed with the random forest. However, the MMD-Shapley effects do not discriminate as clearly the input variables as HSIC. We suspect that there may be remaining estimation bias coming from the nearest-neighbor estimators which we plan to carefully examine in future work.

\section{Conclusion}
In this paper we discussed two moment-independent sensitivity indices which generalize Sobol' ones by relying on the RKHS embedding of probability distributions. These MMD- and HSIC-based sensitivity indices are shown to admit an ANOVA-decomposition, which makes it possible to properly define input interactions and their natural normalization constant. To the best of our knowledge this is the first time such a result is proved for sensitivity indices apart from Sobol' ones. We also defined kernel-embedding Shapley effects which are built upon these indices for the case where the input variables are no longer independent. As discussed through several GSA applications with categorical outputs or stochastic simulators, this opens the path for new powerful and general GSA approaches by means of kernels adapted to the task at hand. Finally, several estimators have been introduced, including new ones inspired by recent advances in Sobol' indices and Shapley effects estimation.\\
However, there is still room for improvement in the theoretical understanding of theses indices. First, we extensively used Mercer's theorem and it would be interesting to extend our results when it no longer holds. We also assume a kernel product form for HSIC indices, whereas the theorem used in our proof allows for more general kernels. From an estimation perspective, we did not exhibit here any central limit theorem, although this would be an important step enabling to statistically test whether indices are zero or not. But this is not at all an easy task, which may be tackled via the functional delta method combined with Mercer's theorem. On the other hand, some bias can be observed in the nearest neighbor estimators, which should be analyzed carefully in future work. Finally, further practical experimentations should be performed to better understand the behavior of these new indices. We think in particular to the choice of the kernel hyperparameters, and the investigation of invariant kernels for outputs given as curves or images.

\appendix

\section{Proofs}

\subsection{Proof of Theorem \ref{th:MMDanova}} \label{sec:MMDanova}

\begin{proof}
The theorem is proved in the case where Mercer's theorem holds, \textit{i.e.}, the output is assumed to be such that $Y\in\mathcal{Y}$ with $\mathcal{Y}$ a compact set and $k_{\mathcal{Y}}$ has the representation 
\begin{equation*}
k_{\mathcal{Y}}(y,y') = \sum_{r=1}^{\infty} \phi_r(y)\phi_r(y')
\end{equation*}
as in Eq. (\ref{eq:MMDindexgroup}). Consider now the random variable $W=\sum_{r=1}^{\infty} \eta^{[r]}({\bX})$ where $\eta^{[r]}({\bX}) = \phi_r(Y) = \phi_r(\eta({\bX}))$. 
To prove the theorem, two formulations of $\var\, W$ are exhibited. 
First, since the functions $\phi_r$ are orthogonal in $\mathbb{L}^2(\mathcal{Y})$ and using the absolute convergence of the series, we have 
\begin{eqnarray*}
\var\, W &=& \sum_{r=1}^{\infty} \var\, \phi_r(Y)\\
&=& \sum_{r=1}^{\infty} \EE\left(\phi_r(Y)\phi_r(Y)\right) - \sum_{r=1}^{\infty}\EE\left(\phi_r(Y)\phi_r(Y')\right)\\
&=& \EE\left(\sum_{r=1}^{\infty}\phi_r(Y)\phi_r(Y)\right) -\EE\left(\sum_{r=1}^{\infty}\phi_r(Y)\phi_r(Y')\right)\\
&=& \EE k(Y,Y) -\EE k(Y,Y').
\end{eqnarray*}
On the other hand, using the variance decomposition (\ref{th:anova}) for each $\eta^{[r]}({\bX}) = \phi_r(\eta({\bX}))$ we get
\begin{eqnarray*}
\var\, W &=& \sum_{r=1}^{\infty} \var\, \eta^{[r]}({\bX})\\
&=& \sum_{r=1}^{\infty} \sum_{A\subseteq\Pd} \sum_{B \subset A}(-1)^{|A|-|B|} \var\, \EE\left(\eta^{[r]}({\bX})\vert{\bX}_B\right) \\
&=& \sum_{A\subseteq\Pd} \sum_{B \subset A}(-1)^{|A|-|B|} \sum_{r=1}^{\infty} \var\, \EE\left(\phi_r(Y)\vert{\bX}_B\right) \\
&=& \sum_{A\subseteq\Pd} \sum_{B \subset A}(-1)^{|A|-|B|}  \EE_{{\bX}_B}\left(\M^2(\textup{P}_{Y},\textup{P}_{Y \vert {\bX}_B})\right)
\end{eqnarray*}
using again the absolute continuity and the expansion of $\EE_{{\bX}_B}\left(\textup{MMD}^2(\textup{P}_{Y},\textup{P}_{Y | {\bX}_B})\right)$ obtained in Eq. (\ref{eq:MMDindexgroup}). The theorem follows by equating both formulations of $\var\, W$.
\end{proof}

\subsection{Proof of Proposition \ref{prop:lawtotalvar}} \label{sec:lawtotalvar}

\begin{proof}
We simply add and subtract $\EE_{{\bX}_A} \EE_{\xi,\xi'\sim\textup{P}_{Y \vert {\bX}_A}} k_{\mathcal{Y}}(\xi,\xi')$:
\begin{eqnarray*}
\M^2_{\textup{tot}} &=& \EE_{\zeta\sim\textup{P}_Y} k_{\mathcal{Y}}(\zeta,\zeta) - \EE_{\zeta,\zeta'\sim\textup{P}_Y} k_{\mathcal{Y}}(\zeta,\zeta') \\
&=& \EE_{\zeta\sim\textup{P}_Y} k_{\mathcal{Y}}(\zeta,\zeta) - \EE_{{\bX}_A} \EE_{\xi,\xi'\sim\textup{P}_{Y \vert {\bX}_A}} k_{\mathcal{Y}}(\xi,\xi') + \EE_{{\bX}_A}\left(\M^2(\textup{P}_{Y},\textup{P}_{Y \vert {\bX}_A})\right)\\
&=& \EE_{{\bX}_A}\EE_{\xi\sim\textup{P}_{Y \vert {\bX}_A}} k_{\mathcal{Y}}(\xi,\xi) - \EE_{{\bX}_A} \EE_{\xi,\xi'\sim\textup{P}_{Y \vert {\bX}_A}} k_{\mathcal{Y}}(\xi,\xi') + \EE_{{\bX}_A}\left(\M^2(\textup{P}_{Y},\textup{P}_{Y \vert {\bX}_A})\right)\\
&=& \EE_{{\bX}_A}\left[\EE_{\xi\sim\textup{P}_{Y \vert {\bX}_A}} k_{\mathcal{Y}}(\xi,\xi) - \EE_{\xi,\xi'\sim\textup{P}_{Y \vert {\bX}_A}} k_{\mathcal{Y}}(\xi,\xi')\right] + \EE_{{\bX}_A}\left(\M^2(\textup{P}_{Y},\textup{P}_{Y \vert {\bX}_A})\right).
\end{eqnarray*}
\end{proof}

\subsection{Proof of Theorem \ref{th:HSICanova}} \label{sec:HSICanova}

\begin{proof}
We first rewrite HSIC between $\bX$ and $Y$ from Eq. (\ref{eq:hsic}) as a multivariate integral, assuming $\textup{P}_{\bX Y}$ is absolutely continuous with respect to the Lebesgue measure on $\mathcal{X}\times\mathcal{Y}$:
\begin{eqnarray}
\HS(\bX,Y)&=&\int_{\mathcal{X}\times\mathcal{X}}\int_{\mathcal{Y}\times\mathcal{Y}} k_{\mathcal{X}}(\bx,\bx')k_{\mathcal{Y}}(y,y') \left[p_{\bX Y}(\bx,y)-p_{\bX}(\bx)p_{Y}(y)\right]\nonumber \\
&&\left[p_{\bX Y}(\bx',y')-p_{\bX}(\bx')p_{Y}(y')\right]d\bx d\bx' dy dy' \label{eq:inthsic}
\end{eqnarray}
where $p_{\bX Y}$, $p_{\bX}$ and $p_{Y}$ are the probability density functions of $(\bX,Y)$, $\bX$ and $Y$, respectively.
As in Theorem \ref{th:MMDanova} we further assume Mercer's theorem holds, which means that 
\begin{equation*}
k_{\mathcal{Y}}(y,y') = \sum_{r=1}^{\infty} \phi_r(y)\phi_r(y').
\end{equation*}
For each $r$, we then define the function
\begin{equation*}
g^{[r]}(\bx) = \int_{\mathcal{X}} \int_{\mathcal{Y}} k_{\mathcal{X}}(\bx,\bx')\phi_r(y') \left[p_{\bX Y}(\bx',y')-p_{\bX}(\bx')p_{Y}(y')\right]d\bx'dy',
\end{equation*}
noting that $g^{[r]}\in\F$ from Assumption \ref{as:finitekernelxy}. It is then straightforward to show that
\begin{eqnarray*}
\Vert g^{[r]}\Vert_{\F}^2 &= &\int_{\mathcal{X}\times\mathcal{X}}\int_{\mathcal{Y}\times\mathcal{Y}} k_{\mathcal{X}}(\bx,\bx')\phi_r(y)\phi_r(y') \left[p_{\bX Y}(\bx,y)-p_{\bX}(\bx)p_{Y}(y)\right]\nonumber \\
&&\left[p_{\bX Y}(\bx',y')-p_{\bX}(\bx')p_{Y}(y')\right]d\bx d\bx' dy dy',
\end{eqnarray*}
which means that
\begin{equation}
\HS(\bX,Y) = \sum_{r=1}^{\infty} \Vert g^{[r]}\Vert_{\F}^2 \label{eq:sumhsic}
\end{equation}
since in Mercer's theorem we have the absolute convergence of the series. Now the idea is to write an orthogonal decomposition (in $\F$) for each function $g^{[r]}$, which will finally provide a decomposition for HSIC through Eq. (\ref{eq:sumhsic}).\\

The orthogonal decomposition of  $g^{[r]}$ is obtained with Theorem 4.1 from \cite{kuo10}. First, recall that we have from the first part of Assumption \ref{as:HSIC}:
\begin{equation}
k_\mathcal{X}(\bx,\bx') = \prod_{l=1}^p \left(1 + k_l(x_l,x_l')\right) = \sum_{A\subseteq\Pd}\prod_{l\in A} k_l(x_l,x_l') := \sum_{A\subseteq\Pd} k_{A}(\bx_A,\bx_A') \label{eq:sumkernelnull}
\end{equation}
which corresponds to Eq. (4.1) in \cite{kuo10}. We then introduce a set of commuting projections $\left\{P_l\right\}_{l=1}^p$ on $\F$ given by
\begin{equation}
P_l(f) = \int_{\mathcal{X}_l} f(x_1,\ldots,x_{l-1},t,x_{l+1},\ldots,x_d)p_{X_l}(t)dt
\end{equation}
for all $f\in\F$. From the second part of Assumption \ref{as:HSIC}, one has for all subset $A\subseteq\Pd$ and $\bx_A\in\mathcal{X}_A$
\begin{equation*}
P_l\left(k_{A}(\cdot,\bx_A)\right)=\prod_{l'\in A,l'\neq l}k_{l'}(\cdot,x_{l'}) \int_{\mathcal{X}_l} k_l(t,x_{l})p_{X_l}(t)dt = 0
\end{equation*}
for $l\in A$, meaning that Eq. (4.5) from \cite{kuo10} is satisfied. From Theorem 4.1 from \cite{kuo10}, we can now state that $g^{[r]}(\bx)$ has an unique orthogonal decomposition given by
\begin{equation*}
g^{[r]} = \sum_{A\subseteq\Pd} g_A^{[r]}
\end{equation*}
where 
\begin{equation*}
g_A^{[r]} = \sum_{B\subseteq A} (-1)^{\vert A\vert - \vert B\vert} P_{-B}(g^{[r]})
\end{equation*}
with $P_{-B}=\prod_{l\notin B}P_l$. Since the decomposition is orthogonal, we further have
\begin{equation*}
\Vert g^{[r]}\Vert_{\F}^2 = \sum_{A\subseteq\Pd} \Vert g_A^{[r]}\Vert_{\F}^2
\end{equation*}
and
\begin{equation*}
\Vert g_A^{[r]}\Vert_{\F}^2 = \sum_{B\subseteq A} (-1)^{\vert A\vert - \vert B\vert} \Vert P_{-B}(g^{[r]}) \Vert_{\F}^2.
\end{equation*}
The last part is to expand $\Vert P_{-B}(g^{[r]}) \Vert_{\F}^2$. We first write the projection:
\begin{eqnarray*}
P_{-B}(g^{[r]})  &=& \int_{\mathcal{X}} \int_{\mathcal{Y}} \int_{\mathcal{X}_{-B}} k_{\mathcal{X}}(\bx,\bx')p_{\bX_{-B}}(\bx_{-B}) \phi_r(y') \left[p_{\bX Y}(\bx',y')-p_{\bX}(\bx')p_{Y}(y')\right]d\bx'dy'd\bx_{-B} \\
&=& \int_{\mathcal{X}} \int_{\mathcal{Y}}  \left(\prod_{l\notin B} \int_{\mathcal{X}_{l}} \left(1+k_l(x_l,x_l')\right) p_{X_l}(x_l)dx_l\right) \prod_{l\in B} \left(1+k_l(x_l,x_l')\right)\phi_r(y')\\
&& \left[p_{\bX Y}(\bx',y')-p_{\bX}(\bx')p_{Y}(y')\right]d\bx'dy' \\ 
&=& \int_{\mathcal{X}} \int_{\mathcal{Y}} \prod_{l\in B} \left(1+k_l(x_l,x_l')\right)\phi_r(y')\left[p_{\bX Y}(\bx',y')-p_{\bX}(\bx')p_{Y}(y')\right]d\bx'dy' \\ 
&=& \int_{\mathcal{X}_B} \int_{\mathcal{X}_{-B}} \int_{\mathcal{Y}}\prod_{l\in B} \left(1+k_l(x_l,x_l')\right)\phi_r(y')\left[p_{\bX Y}(\bx',y')-p_{\bX}(\bx')p_{Y}(y')\right]d\bx'_Bd\bx'_{-B}dy' \\ 
&=& \int_{\mathcal{X}_B} \int_{\mathcal{Y}}\prod_{l\in B} \left(1+k_l(x_l,x_l')\right)\phi_r(y') \left(\int_{\mathcal{X}_{-B}} \left[p_{\bX Y}(\bx',y')-p_{\bX}(\bx')p_{Y}(y')\right] d\bx'_{-B}\right)d\bx'_Bdy' \\
&=& \int_{\mathcal{X}_B} \int_{\mathcal{Y}}\prod_{l\in B} \left(1+k_l(x_l,x_l')\right)\phi_r(y') \left[p_{\bX_B Y}(\bx_B',y')-p_{\bX_B}(\bx_B')p_{Y}(y')\right]d\bx'_Bdy'\\
&=& \int_{\mathcal{X}_B} \int_{\mathcal{Y}} k_B(\bx_B,\bx_B')\phi_r(y') \left[p_{\bX_B Y}(\bx_B',y')-p_{\bX_B}(\bx_B')p_{Y}(y')\right]d\bx'_Bdy'
\end{eqnarray*}
and its norm then equals
\begin{eqnarray*}
\Vert P_{-B}(g^{[r]}) \Vert_{\F}^2 &=& \int_{\mathcal{X}_B\times\mathcal{X}_B}\int_{\mathcal{Y}\times\mathcal{Y}} k_{B}(\bx_B,\bx_B')\phi_r(y)\phi_r(y') \left[p_{\bX_B Y}(\bx_B,y)-p_{\bX_B}(\bx_B)p_{Y}(y)\right]\\
&&\left[p_{\bX_B Y}(\bx_B',y')-p_{\bX_B}(\bx_B')p_{Y}(y')\right]d\bx_B d\bx_B' dy dy'.\\
\end{eqnarray*}
Finally, we have
\begin{eqnarray*}
\HS(\bX,Y) &=& \sum_{r=1}^{\infty} \Vert g^{[r]}\Vert_{\F}^2\\
&=& \sum_{A\subseteq\Pd} \sum_{r=1}^{\infty} \Vert g_A^{[r]}\Vert_{\F}^2\\
&=& \sum_{A\subseteq\Pd} \sum_{B\subseteq A} (-1)^{\vert A\vert - \vert B\vert} \sum_{r=1}^{\infty} \Vert P_{-B}(g^{[r]}) \Vert_{\F}^2
\end{eqnarray*}
and the proof follows from
\begin{eqnarray*}
\sum_{r=1}^{\infty} \Vert P_{-B}(g^{[r]}) \Vert_{\F}^2 &=& \sum_{r=1}^{\infty} \int_{\mathcal{X}_B\times\mathcal{X}_B}\int_{\mathcal{Y}\times\mathcal{Y}} k_{B}(\bx_B,\bx_B')\phi_r(y)\phi_r(y') \left[p_{\bX_B Y}(\bx_B,y)-p_{\bX_B}(\bx_B)p_{Y}(y)\right]\\
&&\left[p_{\bX_B Y}(\bx_B',y')-p_{\bX_B}(\bx_B')p_{Y}(y')\right]d\bx_B d\bx_B' dy dy'\\
&=&  \int_{\mathcal{X}_B\times\mathcal{X}_B}\int_{\mathcal{Y}\times\mathcal{Y}} k_{B}(\bx_B,\bx_B')\left(\sum_{r=1}^{\infty}\phi_r(y)\phi_r(y')\right)\left[p_{\bX_B Y}(\bx_B,y)-p_{\bX_B}(\bx_B)p_{Y}(y)\right]\\
&&\left[p_{\bX_B Y}(\bx_B',y')-p_{\bX_B}(\bx_B')p_{Y}(y')\right]d\bx_B d\bx_B' dy dy'\\
&=& \int_{\mathcal{X}_B\times\mathcal{X}_B}\int_{\mathcal{Y}\times\mathcal{Y}} k_{B}(\bx_B,\bx_B')k_{\mathcal{Y}}(y,y')\left[p_{\bX_B Y}(\bx_B,y)-p_{\bX_B}(\bx_B)p_{Y}(y)\right]\\
&&\left[p_{\bX_B Y}(\bx_B',y')-p_{\bX_B}(\bx_B')p_{Y}(y')\right]d\bx_B d\bx_B' dy dy'\\
&=& \HS(\bX_B,Y).
\end{eqnarray*}
\end{proof}

\subsection{Proof of Proposition \ref{prop:asympHSIC}} \label{sec:asympHSIC}

\begin{proof}
We begin with the integral formulation of HSIC as in Eq. (\ref{eq:inthsic}) and plug the kernel defined in Eq. (\ref{eq:limkernel}):
\begin{eqnarray*}
\HS(\bX_A,Y)&=&\int_{\mathcal{X}_A\times\mathcal{X}_A}\int_{\mathcal{Y}\times\mathcal{Y}} k_{A}(\bx_A,\bx_A')k_{\mathcal{Y}}(y,y') \left[p_{\bX_A Y}(\bx_A,y)-p_{\bX_A}(\bx_A)p_{Y}(y)\right] \\
&&\left[p_{\bX_A Y}(\bx_A',y')-p_{\bX_A}(\bx_A')p_{Y}(y')\right]d\bx_A d\bx_A' dy dy'\\
&=& \int_{\mathcal{X}_A\times\mathcal{X}_A}\int_{\mathcal{Y}\times\mathcal{Y}} \frac{1}{\sqrt{p_{\bX_A}(\bx_A)}\sqrt{p_{\bX_A}(\bx_A')}}\prod_{l\in A} \frac{1}{h} K\left(\frac{x_l-x_l'}{h}\right) k_{\mathcal{Y}}(y,y') \\
&&\left[p_{\bX_A Y}(\bx_A,y)-p_{\bX_A}(\bx_A)p_{Y}(y)\right] \left[p_{\bX_A Y}(\bx_A',y')-p_{\bX_A}(\bx_A')p_{Y}(y')\right]d\bx_A d\bx_A' dy dy'.\\
\end{eqnarray*}
We then use a change of variables $u_l = (x_l-x_l')/h$, which leads to
\begin{eqnarray*}
\HS(\bX_A,Y)&=& \int_{\mathcal{X}_A\times\mathcal{X}_A}\int_{\mathcal{Y}\times\mathcal{Y}} \frac{1}{\sqrt{p_{\bX_A}(\bx_A)}\sqrt{p_{\bX_A}(\bx_A-h\bu_A)}}\prod_{l\in A} K\left(u_l\right) k_{\mathcal{Y}}(y,y') \\
&&\left[p_{\bX_A Y}(\bx_A,y)-p_{\bX_A}(\bx_A)p_{Y}(y)\right] \\
&&\left[p_{\bX_A Y}(\bx_A-h\bu_A,y')-p_{\bX_A}(\bx_A-h\bu_A)p_{Y}(y')\right]d\bx_A d\bu_A dy dy'.\\
\end{eqnarray*}
Now we let $h\rightarrow 0$:
\begin{eqnarray*}
\lim_{h\rightarrow 0}\HS(\bX_A,Y)&=& \int_{\mathcal{X}_A\times\mathcal{X}_A}\int_{\mathcal{Y}\times\mathcal{Y}} \frac{1}{\sqrt{p_{\bX_A}(\bx_A)}\sqrt{p_{\bX_A}(\bx_A)}}\prod_{l\in A} K\left(u_l\right) k_{\mathcal{Y}}(y,y') \\
&&\left[p_{\bX_A Y}(\bx_A,y)-p_{\bX_A}(\bx_A)p_{Y}(y)\right] \\
&&\left[p_{\bX_A Y}(\bx_A,y')-p_{\bX_A}(\bx_A)p_{Y}(y')\right]d\bx_A d\bu_A dy dy'\\
&=& \int_{\mathcal{X}_A} \prod_{l\in A} K\left(u_l\right) d\bu_A \int_{\mathcal{X}_A}\int_{\mathcal{Y}\times\mathcal{Y}} \frac{1}{p_{\bX_A}(\bx_A)}k_{\mathcal{Y}}(y,y')\\
&& \left[p_{\bX_A Y}(\bx_A,y)-p_{\bX_A}(\bx_A)p_{Y}(y)\right] \left[p_{\bX_A Y}(\bx_A,y')-p_{\bX_A}(\bx_A)p_{Y}(y')\right] d\bx_A  dy dy'\\
&=& \int_{\mathcal{X}_A}\int_{\mathcal{Y}\times\mathcal{Y}} k_{\mathcal{Y}}(y,y') \left[p_{Y\vert \bX_A =\bx_A}(y)-p_{Y}(y)\right] \\
&& \left[p_{Y\vert \bX_A =\bx_A}(y')-p_{Y}(y')\right]  p_{\bX_A}(\bx_A) d\bx_A  dy dy'\\
\end{eqnarray*}
where we have used $\int_uK(u)du=1$. Proposition  \ref{prop:asympHSIC} then follows by noting that the last equation equals $\EE_{{\bX}_A}\left(\M^2(\textup{P}_{Y},\textup{P}_{Y \vert {\bX}_A})\right)$ thanks to the integral formulation of the MMD.

\end{proof}

\subsection{Proof of Proposition \ref{prop:rank}} \label{sec:rank}

\begin{proof}
Assuming Mercer's theorem holds, we have $k(y,y') = \sum_{r=1}^{\infty} \phi_r(y)\phi_r(y')$ and
\begin{eqnarray}
\EE\left(\chi_n\right) &=& \sum_{r=1}^\infty \frac{1}{n} \sum_{i=1}^n \EE \left[\phi_r\left(y^{(i)}\right)\phi_r\left(y^{(\sigma_n((i))}\right)\right]\label{eq:step1} \\
&=& \sum_{r=1}^\infty \EE \left[\phi_r\left(y^{(1)}\right)\phi_r\left(y^{(\sigma_n((1))}\right)\right]\nonumber\\
&\rightarrow&  \sum_{r=1}^\infty \EE \left[\EE\left[\phi_r\left(Y\right)\vert V\right]\EE\left[\phi_r\left(Y\right)\vert V\right]\right]\label{eq:step2}\\
&=& \sum_{r=1}^\infty \EE \left[\EE\left[\phi_r\left(Y\right)\vert V\right]^2\right]\nonumber\\
&=& \EE_{V} \EE_{\xi,\xi'\sim \textup{P}_{Y\vert V}}k_{\mathcal{Y}}(\xi,\xi') \label{eq:step3}
\end{eqnarray}
\end{proof}
where (\ref{eq:step1}) is obtained by the absolute convergence of Mercer's series, (\ref{eq:step2}) by applying Eq. (34) in the proof of Proposition 3.2 from \cite{gamboa20} to $f=g=\phi_r$ (which is bounded since $k$ is bounded) and the absolute convergence of Mercer's series and (\ref{eq:step3}) with Eq. \ref{eq:MMDindexgroup}. The Mac Diarmid's concentration inequality given in Theorem A.1 in the proof of Proposition 3.2 from \cite{gamboa20} is unchanged and concludes the proof.

\subsection{Proof of Lemma \ref{lemma:mmshap}} \label{sec:mmshap}

\begin{proof}
We follow closely the proof of Theorem 1 from \cite{song16}. We only need to prove that for a subset $A\subseteq\Pd$ such that $l\notin A$, then 
\begin{equation}
\label{eq:val}
\textup{val}(A\cup \{l\}) - \textup{val}(A) =  \textup{val}'(B\cup \{l\}) - \textup{val}'(B)
\end{equation}
where $B=\Pd\backslash (A\cup \{l\})$. We first need the generalized law of total variance for $\M^2_{\textup{tot}}$ from Proposition \ref{prop:lawtotalvar}:
\begin{equation*}
\M^2_{\textup{tot}} = \EE_{{\bX}_A}\left[\EE_{\xi\sim\textup{P}_{Y \vert {\bX}_A}} k_{\mathcal{Y}}(\xi,\xi) - \EE_{\xi,\xi'\sim\textup{P}_{Y \vert {\bX}_A}} k_{\mathcal{Y}}(\xi,\xi')\right] + \EE_{{\bX}_A}\left(\M^2(\textup{P}_{Y},\textup{P}_{Y \vert {\bX}_A})\right).
\end{equation*}
Now we prove the equality (\ref{eq:val}) for $\textup{val}$ and $\textup{val}'$ defined in Lemma \ref{lemma:mmshap}, except that here we work without the denominator $\M^2_{\textup{tot}}$ for better readability (this does not change the proof since this same constant appears in both value functions).
\begin{eqnarray*}
\textup{val}(A\cup \{l\}) - \textup{val}(A) &=& \EE_{{\bX}_{A\cup\{l\}}}\left(\M^2(\textup{P}_{Y},\textup{P}_{Y \vert {\bX}_{A\cup\{l\}}})\right) - \EE_{{\bX}_A}\left(\M^2(\textup{P}_{Y},\textup{P}_{Y \vert {\bX}_A})\right)\\
&=& \left\{\M^2_{\textup{tot}} - \EE_{{\bX}_{A\cup\{l\}}}\left[\EE_{\xi\sim\textup{P}_{Y \vert {\bX}_{A\cup\{l\}}}} k_{\mathcal{Y}}(\xi,\xi) - \EE_{\xi,\xi'\sim\textup{P}_{Y \vert {\bX}_{A\cup\{l\}}}} k_{\mathcal{Y}}(\xi,\xi')\right]\right\}\\
&& - \left\{\M^2_{\textup{tot}} - \EE_{{\bX}_A}\left[\EE_{\xi\sim\textup{P}_{Y \vert {\bX}_A}} k_{\mathcal{Y}}(\xi,\xi) - \EE_{\xi,\xi'\sim\textup{P}_{Y \vert {\bX}_A}} k_{\mathcal{Y}}(\xi,\xi')\right] \right\}\\
&=& \EE_{{\bX}_A}\left[\EE_{\xi\sim\textup{P}_{Y \vert {\bX}_A}} k_{\mathcal{Y}}(\xi,\xi) - \EE_{\xi,\xi'\sim\textup{P}_{Y \vert {\bX}_A}} k_{\mathcal{Y}}(\xi,\xi')\right] \\
&& - \EE_{{\bX}_{A\cup\{l\}}}\left[\EE_{\xi\sim\textup{P}_{Y \vert {\bX}_{A\cup\{l\}}}} k_{\mathcal{Y}}(\xi,\xi) - \EE_{\xi,\xi'\sim\textup{P}_{Y \vert {\bX}_{A\cup\{l\}}}} k_{\mathcal{Y}}(\xi,\xi')\right]\\
&=& \EE_{{\bX}_{-(B\cup\{l\})}}\left[\EE_{\xi\sim\textup{P}_{Y \vert {\bX}_{-(B\cup\{l\})}}} k_{\mathcal{Y}}(\xi,\xi) - \EE_{\xi,\xi'\sim\textup{P}_{Y \vert {\bX}_{-(B\cup\{l\})}}} k_{\mathcal{Y}}(\xi,\xi')\right] \\
&& - \EE_{{\bX}_{-B}}\left[\EE_{\xi\sim\textup{P}_{Y \vert {\bX}_{-B}}} k_{\mathcal{Y}}(\xi,\xi) - \EE_{\xi,\xi'\sim\textup{P}_{Y \vert {\bX}_{-B}}} k_{\mathcal{Y}}(\xi,\xi')\right]\\
&=& \textup{val}'(B\cup \{l\}) - \textup{val}'(B)
\end{eqnarray*}
where we have used the generalized law of total variance for the second equality. The rest of the proof is identical to the one of Theorem 1 from \cite{song16}.

\end{proof}
\bibliographystyle{agsm}
\bibliography{KernelANOVA}

\end{document}